\newtheorem{lemma}{Lemma}
\newtheorem{theorem}{Theorem}
\newtheorem{corollary}{Corollary}
\newtheorem{proposition}{Proposition}
\theoremstyle{definition}
\newtheorem{remark}{Remark}
\newtheorem{definition}{Definition}
\newtheorem{example}{Example}
\title{Holomorphic differentials of certain solvable covers of the projective line over a perfect field}
\author{Sophie Marques\footnote{Corresponding author. E-mail:~\textsf{sophie.marques@uct.ac.za}}\; and Kenneth Ward\footnote{E-mail: ~\textsf{kward@american.edu}}}
\begin{document}
\maketitle  

\begin{abstract}
We provide a Boseck-type basis of the space of holomorphic differentials for a large class of solvable covers of the projective line with perfect field of constants of characteristic $p > 0$. Within this class, we also describe the Galois module structure of holomorphic differentials for abelian covers.
\end{abstract}
%% maketitle must follow the abstract.
                % Produces the title.

%% If there is not enough space inside the running head
%% for all authors including the title you may provide
%% the leftmark in one of the following three forms:

%% \renewcommand{\leftmark}
%% {First Author: A Short Title}

%% \renewcommand{\leftmark}
%% {First Author and Second Author: A Short Title}

%% \renewcommand{\leftmark}
%% {First Author et al.: A Short Title}

%% \tableofcontents  % Produces the table of contents.
\allowdisplaybreaks
\section{Introduction}
Let $K=k(x)$ be a rational function field of one variable with perfect field of constants $k$ of characteristic $p > 0$. Our aim in this paper is to construct an explicit basis of holomorphic differentials associated with certain Galois covers of $K$. We let $G$ denote the Galois group of the given cover. Boseck \cite{Bos} first studied this problem in positive characteristic for Artin-Schreier and Kummer extensions via an explicit basis according to Hasse's \emph{standard form}, which always exists over a rational field, but does not exist in general; see the discussion in \S 3.1 and 3.2. Boseck's basis has known generalisations to other settings. For example, for elementary abelian extensions, Garcia \cite{Gar1,Gar2} used bases of this type for elementary abelian extensions to compute Weierstrass points, and Madden \cite{Madden} used these to calculate the rank of the Hasse-Witt matrix. Boseck bases may also be used to understand the $k[G]$-module structure of the $k$-vector space $\Omega_L$ of holomorphic differentials of the field $L$. The description of the $k[G]$-module structure in terms of the group structure of $G$ was first done by Valentini and Madan \cite{VaMa} and later generalised by Rzedowski-Calderon et al. \cite{RzViMa2}. 

Here, we refer to Hasse's standard form over a non-rational field as \emph{global} standard form, as it requires certain conditions in the generating equation for every place in the field (see Definitions \ref{asglobaldef} and \ref{kummerglobaldef}). In this paper, we focus on solvable covers $L/K$ which admit a tower of subextensions, where each step in the tower is Artin-Schreier or Kummer; the existence of a tower of this type for solvable $L/K$ amounts to assuming that the constant field $k$ contains sufficient roots of unity to support the Kummer steps in the tower (for example, it is enough to suppose that $k$ contains the $|G|$th roots of unity). The assumption of a global standard form is employed to create a uniformisation of Boseck-type bases in the Artin-Schreier and Kummer steps of the tower. Our first main result, Theorem \ref{basis}, shows that this permits the identification a $k$-basis of $\Omega_L$. This construction requires a new type of Boseck basis for Kummer extensions, which we give in Lemma \ref{kummer}. Theorem \ref{basis} may be used to produce bases of differentials appearing in other existing literature \cite{Bos,Gar1,Gar2,RzViMa2}. A particular consequence of Theorem \ref{basis} is our second main result, Theorem \ref{modulestructure}, which gives the decomposition of $\Omega_L$ into indecomposable $k[G]$-modules when $L$ is an abelian cover of $K$ possessing global standard form. Definition \ref{Boseckff}, to which we affix the name \emph{global standard function field} and which includes global standard form, is sufficiently weak to apply Theorems \ref{basis} and \ref{modulestructure} to classes of towers, e.g., composites of cyclic generalised Artin-Schreier extensions with Kummer extensions over the rational field $k(x)$ which do not share ramified places, with $k$ algebraically closed. We do not currently know of a test to determine whether a given function field is a global standard function field, but we give a partial answer to this question in \S 5.1 and Remark 6.5 when $K=k(x)$. The difficulty with giving such a description is associated to when a generator can be written in global standard form over an arbitrary function field, which we believe is related to the structure of the class group. Interestingly, in known results (e.g., \cite{KaKo, RzViMa1}), the Boseck invariants and representation of $\Omega_L$ coincide whether or not the base field is rational, which causes us to ask whether the same might also be true for abelian function fields $L/K$ when the base field $K \neq k(x)$.

The plan of the paper is as follows. In \S 2, we define the notion of a global standard function field (Definition \ref{Boseckff}), and we set notation which is held throughout the paper unless specified otherwise. \S 3 gives the uniformised Boseck basis for Kummer and Artin-Schreier extensions, a Riemann-Hurwitz formula for towers, and a discussion of why global standard form does not always exist. \S 4 gives the $k$-basis of $\Omega_L$ for a global standard function field $L$. In \S 5, we give examples of the construction of solvable towers with global standard form, and we employ \cite[Theorem 7]{KaKo} and \cite[Theorem 1]{VaMa} to generalise some cases of known results on cyclic groups using the explicit basis of Theorem \ref{basis} and a \emph{weak} standard form (which is stronger than local standard form, but weaker than global standard form). In \S 6, we give the Galois module structure when $L$ is an abelian cover exhibiting global standard form, following an argument of Rzedowski-Calder{\'{o}}n et al. \cite[Theorem 1]{RzViMa1}. Finally, in \S 7, we give some open questions raised by this work.

\section{Notation and assumptions} 
In this paper, we will work with extensions of the following type, unless specified otherwise. \begin{definition} A function field $L$ with perfect field of constants $k$ of characteristic $p > 0$ will be called a \textbf{\emph{global standard function field}}\label{Boseckff} if it satisfies the following conditions: \begin{itemize}\item There is a rational field $K=k(x)$ over $k$ such that $L/K$ is a finite Galois extension, and that $L/K$ may be expressed as a tower of cyclic Galois extensions \begin{equation} \label{eq1} L =L_r/ L_{r-1} \cdots / L_1/ L_0=K,\end{equation} where for each $i=1,\ldots,r$, the extension $L_i/L_{i-1}$ possesses cyclic Galois group of order $m_i$ (thus $m_i\mid [L:K]$), with either $m_i=p$ or $m_i$ coprime to $p$.  \item The field of constants $k$ contains the $m_i$th roots of unity, for any $i = 1,\ldots,r$ such that $m_i$ is coprime with $p$. (This is done simply to ensure that all cyclic extensions in the tower \eqref{eq1} of degree $m_i$ coprime with $p$ are Kummer.)\item There is a choice of a generator $x$ for the rational field $K$ such that the place at infinity for $x$ is unramified in $L$. (Note that this is always true whenever the constant field $k$ is infinite, or when $k$ is finite and not every rational point ramifies in $L$.) \item For each $i=1,\ldots,r$, it is possible to find a generator in global standard form for $L_i / L_{i-1}$ (see Definitions \ref{asglobaldef} and \ref{kummerglobaldef}). \item If $L_i/L_{i-1}$ is Kummer for a given $i \in \{1,\ldots,r\}$, then there does not exist an integer $d > 1$  such that \begin{equation*}d \mid \gcd(v_{\mathfrak{p}_{i-1}}(c_i), m_i)\end{equation*} for all places $\mathfrak{p}_{i-1}$ of $L_{i-1}$ which ramify in $L_i$. (We will see in \S 3.2 how this cannot occur when $L_{i-1}$ is a rational field, and how this is related to unramified subextensions of $L_i/L_{i-1}$.)\end{itemize}\end{definition}
We will show in the process of proving Theorem \ref{basis} why these are necessary for the construction of the Boseck $k$-basis of $\Omega_L$. Naturally, we would like to know when one may obtain a Galois tower satisfying all these assumptions. In this article, we again emphasise that we do not provide a complete answer to this difficult question; instead, we give some interesting explicit examples where such a tower may be obtained (see \S 5), and leave this as an open problem for future study.

We also establish (\S 3.1 and 3.2) that the existence of a geometric unramified Galois extension of a function field is enough to prove that a global standard form does not always exist; any unramified Kummer or Artin-Schreier extension in global standard form must be a constant extension. In other words, the requirement of existence of global standard form excludes the possibility of unramified geometric steps in the tower. 
It is essentially only the choice of $L$ which is fixed: If one is able to find $k$, $K$, $x$, and $L_i$ ($i=1,\ldots,r$) which matches all the above assumptions, then one obtains a basis of holomorphic differentials.

We denote by $\mathbb{P}_K$ the set of all places in $K$ which ramify in $L$. For simplicity of notation, we henceforth adopt the convention of denoting by $\mathfrak{P}$ a place above $\mathcal{P}$. Also, for each finite place $\mathcal{P} \in \mathbb{P}_K$, we denote by $d_{\mathcal{P}}$ the degree of the place $\mathcal{P}$ and $p_{\mathcal{P}}(x)$ the irreducible polynomial associated with $\mathcal{P}$, which is a prime ideal in $k[x]$. For each $i=1,\ldots,r$, we let $\mathfrak{p}_i = \mathfrak{P}\cap L_i$ and $\mathbb{P}_{L_{i}}$ the set of places of $L_{i-1}$ which ramify in $L_i$. 
\section{Preliminaries}
\subsection{Artin-Schreier extensions}
If $m_i=p$, then the extension $L_{i}/L_{i-1}$ is Artin-Schreier, i.e., there exists a primitive element $y_{i}$, called an \emph{Artin-Schreier generator}, such that \begin{equation*}y_{i}^p - y_{i} = c_{i} \in L_{i-1},\end{equation*} with $c_{i} \neq w^p-w$ for all $w\in L_{i}$ \cite[Theorem 5.8.4]{Vil}. There exists a generator of the Galois group $\text{Gal}(L_i/L_{i-1})\cong \mathbb{Z}/ p \mathbb{Z}$ which acts on the element $y_i$ via $y_{i} \rightarrow y_{i}+1$. As in Definition \ref{Boseckff}, we suppose that places $\mathfrak{p}_{i-1}$ of $L_{i-1}$ above the place $\mathcal{P}_{\infty}$ of $L_{0}=K$ corresponding to the pole of the element $x$ are unramified in $L_i$.

\begin{definition} \label{asglobaldef} We say that an Artin-Schreier generator $y_i$ of $L_i/L_{i-1}$ is in \emph{global standard form} if, for any choice of place $\mathfrak{p}_{i-1}$ of $L_{i-1}$, $v_{\mathcal{P},i}:=v_{\mathfrak{p}_{i-1}} (c_{i})=v_{\mathfrak{p}_{i}} (y_{i}) \geq 0$ if $\mathfrak{p}_{i-1}$ is unramified in $L_i$, and otherwise $v_{\mathcal{P},i}<0$ and $\gcd(v_{\mathcal{P},i},p)=1$. \end{definition}

As mentioned previously, if $L_{i}/L_{i-1}$ were an unramified extension, then the existence of a global standard form for $L_i/L_{i-1}$ would imply that $v_{\mathfrak{p}_{i-1}}(c_i) \geq 0$ at any place $\mathfrak{p}_{i-1}$ of $L_{i-1}$, which would in turn imply that $c_i \in k$. As  $k$ is algebraically closed in $L$, this would then imply that $y_i \in k$, so that the extension $L_i/L_{i-1}$ is trivial (also, by the Riemann-Hurwitz genus formula, we know that unramified geometric Artin-Schreier extensions do not exist over $k(x)$). In particular, as we suppose the existence of a generator in global standard form for each $L_{i}/L_{i-1}$, this implies that none of the Artin-Schreier steps $L_{i}/L_{i-1}$ ($i=1,\ldots,r$) is unramified. For an Artin-Schreier extension over the rational field $k(x)$, the existence of a generator in global standard form for an Artin-Schreier was proven by Hasse \cite{Has}.

We note that any generator in global standard form has the same valuation at a given ramified place, and that one can always find such a generator locally in standard form, i.e., at a single choice of place $\mathfrak{p}_{i-1}$ \cite[Lemma 3.7.7]{Sti}. We call this a \emph{local} standard form. It is only when one has an Artin-Schreier generator in a local standard form that one may give a formula for the ramification index and differential exponent at that place in terms of the generator; this applies at one place but does not imply that this choice of generator gives a global standard form. Given a generator in local standard form, the place $\mathfrak{p}_{i-1}$ is ramified (and hence $e_i=p$) if, and only if $v_{\mathcal{P},i}<0$, and the differential exponent satisfies \begin{equation*}d(\mathfrak{p}_{i}|\mathfrak{p}_{i-1})= (p-1)(1-v_{\mathcal{P},i}).\end{equation*} The ramification filtration $\{G_n(\mathfrak{p}_i)\}_{n=0}^\infty$ has only one jump, occurring at $n=1-v_{\mathcal{P},i}$ \cite[Proposition 3.7.8]{Sti}. We recall here the well known Riemann-Hurwitz formula for geometric Artin Schreier extensions, which becomes important for construction of the Boseck basis, together with the existence of a Artin-Schreier generator in standard form.

\begin{lemma}\cite[Proposition 3.7.8]{Sti} For a geometric Artin-Schreier extension $L/K$, where $K$ is a function field of genus $g_K$, the genus of $L$ is given by \begin{equation*}g_{L} = 1-p + p \cdot g_{K}+\frac{1}{2} \sum_{\mathcal{P}\in \mathbb{P}_K} d_{\mathcal{P}}\cdot (p-1)\cdot J_{\mathcal{P}} ,\end{equation*} where $J_{\mathcal{P}}$ is the jump of the ramification group filtration at the place $\mathcal{P}$, equal to $1-v_{\mathcal{P}}$, and $v_{\mathcal{P}}$ denotes the valuation of the ramified place in standard form.
\end{lemma}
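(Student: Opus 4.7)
The plan is to apply the Riemann--Hurwitz genus formula directly, using the differential exponent computation at ramified places that was just recalled from Stichtenoth. Since $L/K$ is geometric and of degree $p$, the formula reads
\[
2g_{L} - 2 = p(2g_{K} - 2) + \deg \mathrm{Diff}(L/K),
\]
where $\mathrm{Diff}(L/K) = \sum_{\mathfrak{P}} d(\mathfrak{P}|\mathcal{P})\cdot \mathfrak{P}$. Unramified places contribute nothing to the different, so only the places $\mathcal{P} \in \mathbb{P}_{K}$ matter, and it is enough to compute $d(\mathfrak{P}|\mathcal{P})\cdot \deg \mathfrak{P}$ for each of them.

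First I would invoke the existence of a local standard form for $y$ at each ramified place $\mathcal{P}$ (Lemma 3.7.7 of Stichtenoth), which allows the different exponent and the ramification jump to be read off from the Artin--Schreier equation. By Proposition 3.7.8 of Stichtenoth, this yields
\[
d(\mathfrak{P}|\mathcal{P}) = (p-1)(1 - v_{\mathcal{P}}) = (p-1) J_{\mathcal{P}}.
\]
Next, since $[L:K] = p$ is prime, any ramified $\mathcal{P}$ is necessarily totally ramified, so there is a unique $\mathfrak{P}$ above $\mathcal{P}$ with $e(\mathfrak{P}|\mathcal{P}) = p$ and $f(\mathfrak{P}|\mathcal{P}) = 1$. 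Consequently $\deg \mathfrak{P} = d_{\mathcal{P}}$, and the contribution of $\mathcal{P}$ to $\deg \mathrm{Diff}(L/K)$ equals $d_{\mathcal{P}}(p-1) J_{\mathcal{P}}$.

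Summing over $\mathcal{P} \in \mathbb{P}_{K}$ and solving for $g_{L}$ produces
\[
g_{L} = 1 - p + p\cdot g_{K} + \frac{1}{2}\sum_{\mathcal{P} \in \mathbb{P}_{K}} d_{\mathcal{P}}(p-1) J_{\mathcal{P}},
\]
which is the claimed formula. There is no real obstacle here; the only point worth flagging is that the differential exponent formula $d(\mathfrak{P}|\mathcal{P}) = (p-1)(1-v_{\mathcal{P}})$ only requires a \emph{local} standard form at $\mathcal{P}$, not the global standard form imposed throughout the rest of the paper, so the lemma applies to every geometric Artin--Schreier extension.
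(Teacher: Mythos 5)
Your argument is correct and is exactly the standard derivation: the paper gives no proof of this lemma, citing Stichtenoth's Proposition 3.7.8, whose proof is precisely this combination of the local-standard-form different exponent $d(\mathfrak{P}|\mathcal{P})=(p-1)(1-v_{\mathcal{P}})$, total ramification at ramified places (so $\deg\mathfrak{P}=d_{\mathcal{P}}$, using that the extension is geometric), and Riemann--Hurwitz. Your closing remark that only a \emph{local} standard form at each place is needed is also accurate and consistent with how the paper uses the result.
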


Note that over a rational field, any Artin-Schreier extension has a Artin-Schreier generator in standard form \cite[Example 5.8.8]{Vil}, which permitted Boseck to give an explicit basis for the space of holomorphic differentials of an Artin-Schreier extension of a rational function field.

\begin{lemma}\cite[Satz 15]{Bos}\label{ASbasis}
Let $K=k(x)$, and let $K(y)=k(x,y)$ be a geometric Artin-Schreier extension of $K$ degree $p$, defined by the relation \begin{equation*}y^p-y = \frac{g(x)}{\prod_{i=1}^r p_i(x)^{v_i}},\end{equation*} where for each $i=1,\ldots,r$, $p_i(x)$ and $g(x)$ are relatively prime polynomials in $k[x]$, $d_i$ denotes the degree of the monic irreducible polynomial $p_i(x)$, and 
$v_i \not\equiv 0 \text{ mod } p $. Suppose furthermore that the element $x$ is chosen so that the place at infinity is unramified. The ramified places in $K(y)$ are precisely those of $K$ associated with $p_i(x)$ for each $i=1,\ldots,r$, and these are fully ramified with ramification index $p$. For each $\mu \in \{0, \ldots, p-1 \}$, define $\lambda_i^{\mu}$ and $\rho_i^\mu$ according to the following formula: 
\begin{equation*}p \lambda_i^{\mu}+\rho_i^\mu=(p-1 -\mu) v_i+p-1,\end{equation*} with $0 \leq \rho_i^\mu \leq p -1$. For each such $\mu$, let $g_\mu(x) \in k[x]$ be defined as \begin{equation*}g_\mu (x) = \prod_{i=1}^r (p_i(x))^{\lambda_i^\mu},\end{equation*} and let \begin{equation*}t^\mu = \sum_{i=1}^r d_i \lambda_i^\mu.\end{equation*} Then the set \begin{equation*}\mathfrak{B}_{L} = \left\{ x^\nu [g_\mu (x)]^{-1} y^\mu dx \;\big|\; 0\leq \nu \leq t^\mu-2,  \ 0\leq \mu \leq p-2\right\}\end{equation*} forms a $k$-basis of the space of $\Omega_L$ of holomorphic differentials of $L$.
\end{lemma}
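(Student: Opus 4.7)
My plan is to show that $\mathfrak{B}_L$ is a $k$-basis of $\Omega_L$ by establishing three facts: (i) each element of $\mathfrak{B}_L$ lies in $\Omega_L$, (ii) $|\mathfrak{B}_L| = g_L$, and (iii) the elements of $\mathfrak{B}_L$ are $k$-linearly independent. Since $\dim_k \Omega_L = g_L$, these together yield the claim.

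For (i), I would compute the valuation of $\omega_{\nu,\mu} := x^\nu g_\mu(x)^{-1} y^\mu\, dx$ at every place of $L$. Using the description of Artin--Schreier extensions in standard form (Proposition~3.7.8 of \cite{Sti}), one obtains $v_{\mathfrak{P}_j}(dx) = (p-1)(1+v_j)$ at a ramified place $\mathfrak{P}_j$ above the place $P_j$ of $K$ corresponding to $p_j(x)$, and $v_{\mathfrak{P}_\infty}(dx) = -2$ at the unramified place above infinity. One also has $v_{\mathfrak{P}_j}(y) = -v_j$ at the ramified places (using $\gcd(v_j, p) = 1$), and $v_{\mathfrak{P}_\infty}(y) = v_{P_\infty}(c) = \sum_j v_j d_j - \deg g \geq 0$, the latter nonnegative precisely because infinity is unramified. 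At a ramified place, the defining relation for $\lambda_j^\mu, \rho_j^\mu$ telescopes to give $v_{\mathfrak{P}_j}(\omega_{\nu,\mu}) = p\nu\, v_{P_j}(x) + \rho_j^\mu \geq 0$. At infinity, direct computation yields $v_{\mathfrak{P}_\infty}(\omega_{\nu,\mu}) = t^\mu - \nu - 2 + \mu\bigl(\sum_j v_j d_j - \deg g\bigr)$, which is nonnegative by the bound $\nu \leq t^\mu - 2$. The remaining places are unramified and involve only factors of nonnegative valuation, so holomorphy is immediate.

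For (ii), the Riemann--Hurwitz formula stated above gives $g_L = -(p-1) + \tfrac{p-1}{2}\sum_j d_j(1 + v_j)$, while $|\mathfrak{B}_L| = \sum_{\mu=0}^{p-2}(t^\mu - 1)$. The key identity is $\sum_{\mu=0}^{p-2} \lambda_j^\mu = (p-1)(v_j + 1)/2$, which follows from the observation that $\{\rho_j^\mu : 0 \leq \mu \leq p-1\}$ is a permutation of $\{0, 1, \ldots, p-1\}$ when $\gcd(v_j, p) = 1$, together with $\rho_j^{p-1} = p-1$ and $\lambda_j^{p-1} = 0$ (forced by setting $\mu = p-1$ in the defining relation). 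Matching totals then yields $|\mathfrak{B}_L| = g_L$. For (iii), linear independence over $k$ follows from the facts that $\{y^\mu : 0 \leq \mu \leq p-1\}$ is a $K$-basis of $L$ and that the $x^\nu$ are $k$-linearly independent in $K$; a vanishing $k$-linear combination collapses to zero in each $\mu$-layer after clearing the $g_\mu(x)^{-1}$ factor. The main obstacle is the valuation bookkeeping at infinity, which requires a careful interplay between the degrees in $t^\mu$ and the global standard form constraint $\deg g \leq \sum_j v_j d_j$; this is the one place where the unramifiedness at infinity is used crucially.
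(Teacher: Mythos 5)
Your proposal is correct and follows essentially the same route the paper takes: it is the divisor-computation-plus-counting argument of Boseck's Satz 15, which the paper cites for this lemma and reproduces in full only for the Kummer analogue (Lemma \ref{kummer}), where the same three steps appear — holomorphy via the division relation defining $\lambda_i^\mu,\rho_i^\mu$, the complete-residue-system identity giving $\sum_\mu \lambda_j^\mu = (p-1)(v_j+1)/2$, and matching against Riemann--Hurwitz. The only point worth making explicit is that the count $|\mathfrak{B}_L|=\sum_{\mu=0}^{p-2}(t^\mu-1)$ presupposes $t^\mu\geq 1$ for every $\mu\leq p-2$, which follows immediately from your defining relation since $p\lambda_i^\mu=(p-1-\mu)v_i+p-1-\rho_i^\mu\geq v_i>0$ forces $\lambda_i^\mu\geq 1$.
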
 

\subsection{Kummer extensions} 
As for each $i \in \{1,\ldots,r\}$ with $gcd(m_i,p)=1$, the extension $L_i/L_{i-1}$ is Kummer, one may find a primitive element $y_{i}$, called a \emph{Kummer generator}, such that $L_i=L_{i-1}(y_i)$, $y_{i}^{m_i} = c_{i} \in L_{i-1}$, and $c_{i} \neq w^{v}$, for all $v \mid m_i$ and $w \in L_{i}$. For a given primitive $m_i$th root of unity $\zeta$, there exists a generator of the Galois group $\text{Gal}(L_i/L_{i-1}) \cong \mathbb{Z}/m_i \mathbb{Z}$ which acts on the element $y_i$ via $y_i \rightarrow \zeta y_i$. We note that $v_{\mathfrak{p}_{i-1}}(c_i)$ is not divisible by $m_i$ at any place $\mathfrak{p}_{i-1}$ of $L_{i-1}$ ramified in $L_i$. As in Definition \ref{Boseckff}, we suppose that places $\mathfrak{p}_{i-1}$ above the place $\mathcal{P}_{\infty}$ of $L_{0}=K$ corresponding to the pole of the element $x$ are unramified, which is equivalent to $m_i | v_{\mathfrak{p}_{i-1}}(c_i)$.

\begin{definition} \label{kummerglobaldef} We say that a Kummer generator $y_i$ of $L_i/L_{i-1}$ is in \emph{global standard form} if $0\leq v_{\mathfrak{p}_{i-1}}(c_i)<m_i $ at all places $\mathfrak{p}_{i-1}$ of $L_{i-1}$ ramified in $L_i$, and $v_{\mathfrak{p}_{i-1}}(c_i) =0$ at all places $\mathfrak{p}_{i-1}$ of $L_{i-1}$ unramified in $L_i$, with the exception of those places $\mathfrak{p}_{i-1}$ of $L_{i-1}$ above the place $\mathcal{P}_{\infty}$ of $L_{0}=K$ corresponding to the pole of the element $x$ for which we suppose that $v_{\mathfrak{p}_{i-1}}(c_i)\leq 0$. \end{definition}

The existence of a generator in global standard form for a Kummer extension over a rational field has been proven by Hasse \cite{Has}. Over an arbitrary function field $K$, we know that a generator in global standard form does not always exist. As evidence, we use the case where $K$ possesses au unramified geometric extension (this implies, in particular, that $K$ is not rational). For such a $K$ and such an extension, let $y$ be a generator such that $y^n = c \in K$. If $y$ were in global standard form, then $v_{\mathcal{P}}(c) =0$  at all places $\mathcal{P}$ that are not above $\mathcal{P}_{\infty}$ and $v_{\mathcal{P}}(c)\leq 0$ at the places above infinity, which is impossible unless the extension is constant. As a global standard form does exist over a rational field, this also implies that unramified geometric Kummer extensions do not exist over a rational field, a fact which also follows by Riemann-Hurwitz. 

In order to prove Theorem \ref{basis}, it is necessary to assume that there exists no integer $d > 1$  such that \begin{equation*}d \mid \gcd(v_{\mathfrak{p}_{i-1}}(c_i), m_i)\end{equation*} for all places $\mathfrak{p}_{i-1}$ of $L_{i-1}$ which ramify in $L_i$. This is linked to the existence of unramified geometric subextensions in $L_i/L_{i-1}$, for which we know that global standard form does not exist. To see this, suppose that $v_{{p}_{i-1}}(c_i)=l_{{p}_{i-1}}$ for any such ramified place and denote $d=\gcd(l_{{p}_{i-1}},m_i)$ this common factor. With $u= y^{m_i/d}$, the Kummer subextension $L_{i-1}(u)/L_{i-1}$ has $u^d = c_i$, and by assumption, $d \mid v_{{p}_{i-1}}(c_i)$, for any ramified places $\mathfrak{p}_{i-1}$ in $L_i / L_{i-1}$. As a consequence of this, $L_{i-1}(u)/L_{i-1}$ is unramified. Boseck did not need to assume this in creating an explicit basis for Kummer extensions over the rational field, as unramified extensions of the rational field simply do not exist. The same is true in the work of Valentini and Madan \cite{VaMa}. 

It is known that for a fixed choice of place $\mathfrak{p}_{i-1}$ of $L_{i-1}$, $y_i$ may be chosen in \emph{local} standard form at $\mathfrak{p}_{i-1}$ \cite[Theorem 5.8.12]{Vil}, so that $0= v_{\mathfrak{p}_{i-1}}(c_{i})$ if $\mathfrak{p}_{i-1}$ is unramified and $v_{\mathfrak{p}_{i-1}}(c_{i})>0$ if $\mathfrak{p}_{i-1}$ ramified in $L_i$, where the valuation $v_{\mathfrak{p}_{i-1}}(c_{i})$ of $c_i$ is viewed in the field $L_{i-1}$ via $y_i^{m_i} = c_i$. For each place $\mathfrak{p}_i$ of $L_i$ ramified above $L_{i-1}$, the ramification index in $L_i/L_{i-1}$ satisfies \begin{equation*}e(\mathfrak{p}_{i}|\mathfrak{p}_{i-1})=\frac{m_i}{\gcd(m_i,v_{\mathfrak{p}_{i-1}}(c_{i}))},\end{equation*} where the valuation $v_{\mathfrak{p}_{i-1}}$ is in local standard form, and the differential exponent is equal to \begin{equation*}d(\mathfrak{p}_{i}|\mathfrak{p}_{i-1})=\frac{m_i}{\gcd(m_i,v_{\mathfrak{p}_{i-1}}(c_{i}))}-1.\end{equation*} We note furthermore that $v_{\mathcal{P},i} := v_{\mathfrak{p}_i} (y_i) = e(\mathfrak{p}_{i}|\mathfrak{p}_{i-1})v_{\mathfrak{p}_{i-1}}(c_{i})/ m_i$ is coprime with $n$, for any place $\mathfrak{p}_i$ of $L_i$ above a ramified place $\mathfrak{p}_{i-1}$ of $L_{i-1}$.  
A Kummer extension is of degree coprime to $p$, thus tamely ramified, and as a consequence the ramification filtration $\{G_n(\mathfrak{p}_i)\}_{n=0}^\infty$ at $\mathfrak{p}_i$ has only one jump, which occurs at $n=1$ \cite[Proposition 3.7.3]{Sti}. We recall again the well-known genus formula for geometric Kummer extensions (ibid.).

\begin{lemma} \label{boseck} For a geometric Kummer extension $L/K$ of degree $n$, where $K$ is a function field of genus $g_K$, the genus of $L$ is given by \begin{equation*}g_{L} = 1-n + n \cdot g_{K}+\frac{1}{2} \sum_{\mathcal{P}\in \mathbb{P}_K} (e(\mathfrak{P}|\mathcal{P})-1) \cdot \frac{n}{e(\mathfrak{P}|\mathcal{P})}\cdot d_{\mathcal{P}}.\end{equation*} \end{lemma}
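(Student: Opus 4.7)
The plan is to derive the formula directly from the Riemann--Hurwitz genus formula
\[
2g_L - 2 \;=\; [L:K]\,(2g_K - 2) \;+\; \deg \mathfrak{D}_{L/K},
\]
where $\mathfrak{D}_{L/K}$ is the different. Since $[L:K]=n$, it suffices to compute $\deg \mathfrak{D}_{L/K}$ in terms of the ramification indices $e(\mathfrak{P}|\mathcal{P})$ and the degrees $d_{\mathcal{P}}$.

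First, because $L/K$ is Galois, the decomposition group acts transitively on the places $\mathfrak{P}$ above a fixed place $\mathcal{P}$ of $K$. Consequently the ramification index $e_{\mathcal{P}}:=e(\mathfrak{P}|\mathcal{P})$ and the residue degree $f_{\mathcal{P}}:=f(\mathfrak{P}|\mathcal{P})$ do not depend on the choice of $\mathfrak{P}$ above $\mathcal{P}$, and the number of places of $L$ above $\mathcal{P}$ is $g_{\mathcal{P}}=n/(e_{\mathcal{P}}f_{\mathcal{P}})$. Next, since $\gcd(n,p)=1$, every ramification is tame, so Dedekind's differential formula gives $d(\mathfrak{P}|\mathcal{P})=e_{\mathcal{P}}-1$ at each ramified place. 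Combining these with $\deg \mathfrak{P}=f_{\mathcal{P}}\,d_{\mathcal{P}}$ yields
\[
\deg \mathfrak{D}_{L/K} \;=\; \sum_{\mathcal{P}\in \mathbb{P}_K}\, g_{\mathcal{P}}\,(e_{\mathcal{P}}-1)\,f_{\mathcal{P}}\,d_{\mathcal{P}} \;=\; \sum_{\mathcal{P}\in \mathbb{P}_K}(e_{\mathcal{P}}-1)\,\frac{n}{e_{\mathcal{P}}}\,d_{\mathcal{P}}.
\]
Substituting this into Riemann--Hurwitz, dividing by $2$, and rearranging gives exactly the stated formula for $g_L$.

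The proof is essentially bookkeeping; the only subtle points are (i) that the ramification is tame, which is automatic from $\gcd(n,p)=1$ and the Kummer form $y^n=c$, and (ii) that $e$ and $f$ are well-defined at each $\mathcal{P}\in\mathbb{P}_K$, which uses only the Galois hypothesis together with transitivity of the Galois action on places above $\mathcal{P}$. Neither presents a genuine obstacle, so I do not expect any real difficulty; the lemma is the direct tame-ramification specialisation of Riemann--Hurwitz, as in Stichtenoth's text.
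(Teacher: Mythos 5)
Your proposal is correct: the paper states this lemma without proof, simply citing Stichtenoth (Proposition 3.7.3), and your derivation via Riemann--Hurwitz together with Dedekind's different theorem for tame ramification is exactly the standard argument behind that citation. The two subtleties you flag (tameness from $\gcd(n,p)=1$, and well-definedness of $e$ and $f$ over each $\mathcal{P}$ from the Galois/cyclic structure of a Kummer extension) are handled correctly, so there is nothing to add.
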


Unlike in Artin-Schreier extensions, the term $n/e(\mathfrak{P}|\mathcal{P})$ appears in the Riemann-Hurwitz formula, as partial ramification is possible in Kummer extensions. In order to construct a $k$-basis of $\Omega_L$ using local ramification data, we must transform Boseck's formulae for Kummer and Artin-Schreier extensions to obtain a type of basis which is consistent across such extensions. The primary obstruction to a direct application of the Boseck basis is the sign difference in the power of the generator in the Kummer versus the Artin-Schreier case: For Kummer extensions, the power of a generator appearing in the Boseck basis is negative, whereas it is positive for Artin-Schreier extensions. We therefore construct an alternate form of the Boseck basis for Kummer extensions in the following lemma.
\begin{lemma} \label{kummer}
Let $K=k(x)$, and let $K(y)=k(x,y)$ be a geometric Kummer extension of $K$ degree $n$, defined by the relation \begin{equation*}y^n = f(x) \in k[x]\end{equation*} in global standard form with \begin{equation*}f(x) =\alpha \cdot \prod_{i=1}^r (p_i(x))^{v_i},\end{equation*} where for each $i=1,\ldots,r$, $d_i$ denotes the degree of the monic irreducible polynomial $p_i(x)$ and $ 0 < v_i $. Suppose that the place of $K$ at infinity, corresponding to the pole of $x$, is unramified in $K(y)$ (which is equivalent to requiring $n \mid \deg f(x)$). Let $ v=\sum_{i=1}^r v_i d_i$ denote the degree of $f(x)$, and for each $i=1,\ldots,r$, let $m_i = \frac{e_i v_i}{n}$, where $e_i$ is the ramification index for $p_i(x)$ in $K(y)$. For each $\mu \in \{1, \ldots , n-1 \}$, we define $\lambda_i^{\mu}$ and $\rho_i^\mu$ according to the following formula: \begin{equation*}e_i \lambda_i^{\mu}+\rho_i^\mu= \mu m_i+e_i-1,\end{equation*} with $0 \leq \rho_i^\mu \leq e_i -1$. For each such $\mu$, let $g_\mu(x) \in k[x]$ be defined as \begin{equation*}g_\mu (x) = \prod_{i=1}^r (p_i(x))^{\lambda_i^\mu},\end{equation*} and let \begin{equation*}t^\mu = \sum_{i=1}^r \frac{d_i}{e_i} (e_i - 1 -\rho_i^\mu).\end{equation*} Then the set \begin{equation*}\mathfrak{B}_{K(y)} = \left\{ x^\nu [g_\mu (x)]^{-1} y^\mu dx \;\big|\; 0\leq \nu \leq t^\mu-2,  \ 1\leq \mu \leq n-1\right\}\end{equation*} forms a $k$-basis of the space of $\Omega_{K(y)}$ of holomorphic differentials of $K(y)$.
\end{lemma}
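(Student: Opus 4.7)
The plan is to decompose $\Omega_{K(y)}$ into Galois eigenspaces and exploit local ramification data at each place to characterize the holomorphic differentials in each piece. Let $\sigma$ generate $\mathrm{Gal}(K(y)/K)$ via $y \mapsto \zeta y$ for a fixed primitive $n$th root of unity $\zeta \in k$. Since $\gcd(n,p)=1$, we obtain $\Omega_{K(y)} = \bigoplus_{\mu=0}^{n-1} \Omega_\mu$ as $k$-vector spaces, where $\sigma$ acts on $\Omega_\mu$ as multiplication by $\zeta^\mu$. As $K(y) = \bigoplus_{\mu=0}^{n-1} K \cdot y^\mu$ and $dx$ is $\sigma$-invariant, every $\omega \in \Omega_\mu$ has the form $h(x)\, y^\mu\, dx$ with $h \in K$. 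The piece $\Omega_0$ pulls back to $\Omega_K = 0$, so only $\mu \in \{1, \ldots, n-1\}$ contribute.

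For each such $\mu$, I would compute the divisor of $y^\mu\, dx$ using global standard form: at a place $\mathfrak{P}$ above a ramified $\mathfrak{p}_i$, $v_\mathfrak{P}(y) = e_i v_i / n = m_i$ and $v_\mathfrak{P}(dx) = e_i - 1$ (tame differential exponent); at unramified finite places, both valuations vanish; at $\mathfrak{P}_\infty$ above the unramified $\mathcal{P}_\infty$, $v_{\mathfrak{P}_\infty}(y) = -v/n$ and $v_{\mathfrak{P}_\infty}(dx) = -2$. Holomorphy of $h(x)\, y^\mu\, dx$ is thus equivalent to the three conditions $e_i v_{\mathfrak{p}_i}(h) + \mu m_i + e_i - 1 \geq 0$ at each ramified $\mathfrak{p}_i$, $v_\mathcal{P}(h) \geq 0$ at each unramified finite $\mathcal{P}$, and $\deg h \leq -\mu v/n - 2$. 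Using the defining relation $e_i \lambda_i^\mu + \rho_i^\mu = \mu m_i + e_i - 1$ with $0 \leq \rho_i^\mu \leq e_i - 1$, the first condition reads $v_{\mathfrak{p}_i}(h) \geq -\lambda_i^\mu$; hence $h = P(x)/g_\mu(x)$ with $P \in k[x]$, and the degree bound becomes $\deg P \leq \sum_i d_i \lambda_i^\mu - \mu v/n - 2$. Dividing the defining relation by $e_i$ yields $\lambda_i^\mu = \mu v_i/n + (e_i - 1 - \rho_i^\mu)/e_i$; multiplying by $d_i$, summing, and using $v = \sum_i d_i v_i$ identifies this upper bound exactly with $t^\mu - 2$. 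This produces the claimed spanning set $\{x^\nu [g_\mu(x)]^{-1} y^\mu\, dx : 0 \leq \nu \leq t^\mu - 2\}$ for $\Omega_\mu$, and $k$-linear independence across $\mu$ is automatic from the eigenspace decomposition.

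The main obstacle is to confirm that $t^\mu \geq 1$ for every $\mu \in \{1, \ldots, n-1\}$, so that each eigenspace actually contributes $t^\mu - 1$ basis elements rather than none. A direct check shows $t^\mu = 0$ forces $\rho_i^\mu = e_i - 1$ for every $i$, hence $e_i \mid \mu m_i$, which forces $e_i \mid \mu$ since $m_i = v_i/\gcd(n,v_i)$ and $e_i = n/\gcd(n,v_i)$ are coprime. Thus $t^\mu > 0$ for every $1 \leq \mu \leq n-1$ is equivalent to $\mathrm{lcm}_i(e_i) = n$, which is exactly the hypothesis that no integer $d > 1$ divides every $\gcd(v_i, n) = n/e_i$. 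Under this hypothesis, a direct calculation using that $\rho_i^\mu$ takes each residue in $\{0, \ldots, e_i-1\}$ exactly $n/e_i$ times as $\mu$ ranges over $\{0, \ldots, n-1\}$ gives $\sum_{\mu=1}^{n-1}(t^\mu - 1) = 1 - n + (n/2)\sum_i d_i(1 - 1/e_i)$, which matches $g_{K(y)}$ by the Riemann-Hurwitz formula of Lemma \ref{boseck}. Since the listed set has cardinality $\dim_k \Omega_{K(y)}$ and spans each eigenspace, it forms a $k$-basis of $\Omega_{K(y)}$.
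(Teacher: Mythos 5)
Your proof is correct, and while it rests on the same local valuation computations as the paper's, its logical skeleton is genuinely different. The paper follows the classical Boseck template: compute the divisor of each candidate $[g_\mu(x)]^{-1}y^\mu\,dx$, observe that holomorphy is equivalent to $t^\mu\geq 2$, note $k$-linear independence, and then conclude by showing $\sum_{\mu=1}^{n-1}(t^\mu-1)=g_{K(y)}$ via Riemann--Hurwitz (Lemma \ref{boseck}); the verification $t^\mu\geq 1$ is essential there so that the cardinality of the set really is $\sum_\mu(t^\mu-1)$. You instead diagonalize $\sigma$ on $\Omega_{K(y)}$ (legitimate since $p\nmid n$ and $\zeta\in k$), identify the $\zeta^\mu$-eigenspace with differentials $h(x)y^\mu\,dx$, and translate holomorphy into local conditions forcing $h=P(x)/g_\mu(x)$ with $\deg P\leq t^\mu-2$; this proves spanning of each eigenspace directly, so the basis property no longer depends on the genus count, and the count together with $t^\mu\geq1$ serves only to identify $\dim_k\Omega_\mu=\max(t^\mu-1,0)$ --- a byproduct the paper does not state but which is exactly the kind of eigenspace information used in \S 6. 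Your valuations ($m_i$ for $y$ and $e_i-1$ for $dx$ at ramified places, $-v/n$ and $-2$ at infinity, and the reduction of the ramified condition to $v_{\mathfrak{p}_i}(h)\geq-\lambda_i^\mu$) match the paper's divisor calculation, and your handling of $t^\mu=0$ via $\gcd(m_i,e_i)=1$ and the standing hypothesis of \S 2 (no $d>1$ dividing every $\gcd(v_i,n)$) is cleaner than the paper's, which passes through writing $y^n$ as a $d$th power of an element of $K$ --- a step that tacitly assumes the leading coefficient $\alpha$ is a $d$th power in $k$. The trade-off is that your eigenspace decomposition does not survive into the mixed towers of Theorem \ref{basis}, where $p$ divides $|G|$ and $k[G]$ is no longer semisimple, so the counting template is the one the paper actually needs to generalize.
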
 
\begin{proof} The argument follows similarly to the proof of \cite[Satz 16]{Bos}. For each $i=1,\ldots,r$, $m_i$ is equal to the valuation of $y$ at a ramified place of $K(y)/K$. We thus find that the divisor of the differential $[g_\mu(x)]^{-1} y^\mu dx$ in $K(y)$ is equal to  \begin{align*} ([g_\mu(x)]^{-1} y^\mu dx)_{K(y)} &= \prod_{i=1}^r \mathfrak{P}_i^{e_i-1 - e_i \lambda_i^\mu + \mu m_i} \cdot (\text{Con}_{K/K(y)}(\mathcal{P}_\infty))^{\sum_{i=1}^r d_i \lambda_i^\mu - \mu \left(\frac{1}{n}\sum_{i=1}^r v_i d_i\right) -2} \\&= \prod_{i=1}^r \mathfrak{P}_i^{e_i-1 - e_i \lambda_i^\mu + \mu m_i} \cdot (\text{Con}_{K/K(y)}(\mathcal{P}_\infty))^{\frac{1}{n} \sum_{i=1}^r d_i\frac{n}{e_i} \left( e_i \lambda_i^\mu - \mu  m_i \right) -2} \\& =\prod_{i=1}^r \mathfrak{P}_i^{\rho_i^\mu} \cdot (\text{Con}_{K/K(y)}(\mathcal{P}_\infty))^{\sum_{i=1}^r d_i(e_i-1-\rho_i^\mu)/e_i - 2},\end{align*} where $\text{Con}_{K/K(y)}$ denotes the conorm of ideals of $K$ into $K(y)$. By definition of $t^\mu$, it follows that the differential $[g_\mu(x)]^{-1} y^\mu dx$ is holomorphic provided that $t^\mu \geq 2$. 
By the requirement $\mu \in \{1,\ldots,n-1\}$, it follows that $t^\mu \geq 1$. To see this, notice that by definition, $\rho_i^{\mu} \leq e_i-1$, and thus $t^\mu$ is always nonnegative (and an integer; see \cite[Satz 16]{Bos}). If $t^\mu < 1$, then $t^\mu = 0$, which then implies that $\rho_i^{\mu}=e_i-1$ for all $i =1,\ldots,r$. It follows for each $i=1,\ldots,r$ that \begin{equation*}e_i \lambda_i^\mu = \mu m_i = \frac{\mu e_i v_i }{n}.\end{equation*} In particular, we obtain that $n \mid \mu v_i$. As $\mu < n$, it follows that there is a factor $d$ of $n$ ($d > 1$) which divides $v_i$, for all $i = 1,\ldots,r$. We thus obtain $y^n = z^d$ with $z\in K$. Let $u=(y^{n/d}/z) \in K$, then $u^d=1$. 
As $k$ contains the $dth$ roots of unity, this contradicts that the degree of $K(y)/K$ is equal to $n$. Also, by definition of $\mathfrak{B}_L$, there exist no holomorphic differentials with $t^\mu = 1$. Therefore, the number of such differentials which are holomorphic is equal to \begin{equation}\label{eq2} \sum_{\mu=1}^{n-1} (t^\mu - 1),\end{equation} as $t^\mu = 0$ cannot occur as mentioned previously and $t^\mu = 1$ does not contribute to this sum. Therefore, by Riemann-Hurwitz \cite[Corollary 9.4.3]{Vil}, the quantity \eqref{eq2} is equal to \begin{align*} \sum_{\mu=1}^{n-1} (t^\mu - 1) &= \sum_{\mu=1}^{n-1} \left[\left(\sum_{i=1}^r \frac{d_i}{e_i}(e_i-1-\rho_i^\mu )\right) -1\right]\\& = \sum_{i=1}^r d_i \frac{n}{e_i}\left(\frac{e_i-1}{2}\right) - (n-1)\\& = -1 + n(g_{K}-1) + \frac{1}{2} \sum_{i=1}^r d_i \frac{n}{e_i} (e_i-1) \\& = g_{K(y)}, \end{align*} where the second equality above is justified by \begin{align*} \sum_{\mu=1}^{n-1} (e_i - 1 - \rho_i^\mu) & =  \sum_{\mu=0}^{n-1} (e_i - 1 - \rho_i^\mu) \\& = \sum_{k=1}^{n/e_i}\sum_{\mu=1+(k-1)e_i}^{ke_i-1}  (e_i - 1 - \rho_i^\mu) \\& = \sum_{k=1}^{n/e_i}\sum_{\mu=1+(k-1)e_i}^{ke_i-1}  \rho_i^\mu \\&= \sum_{k=1}^{n/e_i} \frac{e_i (e_i-1)}{2} \\& = n\frac{(e_i-1)}{2}.\end{align*} This follows from the identity \begin{equation*}\sum_{\mu=1+(k-1)e_i}^{ke_i-1}  (e_i - 1 - \rho_i^\mu)  = \sum_{\mu=1+(k-1)e_i}^{ke_i-1} \rho_i^\mu= \frac{e_i (e_i-1)}{2},\end{equation*} which holds as $\gcd(m_i, n)=1$, so that the quantities \begin{equation*}\mu m_i+e_i-1 \;\;\;\;\;\; (\mu = 1+(k-1)e_i,\ldots,ke_i-1)\end{equation*} form a complete set of residues modulo $e_i$. It follows that the elements of $\mathfrak{B}_{K(y)}$ form a $k$-basis of $\Omega_{K(y)}$.
\end{proof}

As in Lemma 3.3, the $k$-basis of $\Omega_L$ in the case that $L/K$ is an Artin-Schreier extension and $K=k(x)$ is the rational function field also consists of elements of the form $x^\nu [g_\mu (x)]^{-1} y^\mu dx$. This gives a unified expression for the Boseck basis for both Artin-Schreier and Kummer extensions, which in the sequel will allow us to generate the basis for the mixed solvable tower \eqref{eq1}.

\subsection{The Riemann-Hurwitz formula for towers}
By previous arguments, for both Artin-Schreier and Kummer extensions, the differential exponent at any place $\mathfrak{p}_{i-1} \in \mathbb{P}_{L_i}$ is given by \begin{equation*}d(\mathfrak{p}_{i}|\mathfrak{p}_{i-1})= (e(\mathfrak{p}_{i}|\mathfrak{p}_{i-1})-1)J_{\mathcal{P},i},\end{equation*} where $J_{\mathcal{P},i}$ is the unique jump of the ramification filtration for $\mathfrak{p}_i$ in $L_i/L_{i-1}$. By Riemann-Hurwitz, we thus obtain the following genus formula in either situation.

\begin{lemma} For the extension $L_i/L_{i-1}$, the genus formula is given by \begin{equation*}g_{L_i} = 1-m_i + m_i \cdot g_{L_{i-1}}+\frac{1}{2} \sum_{\mathfrak{p}_{i-1}\in \mathbb{P}_{L_i}}\frac{m_i}{e(\mathfrak{p}_{i}|\mathfrak{p}_{i-1})} \cdot (e(\mathfrak{p}_{i}|\mathfrak{p}_{i-1})-1)\cdot J_{\mathcal{P},i}\cdot d_{\mathfrak{p}_{i-1}}.\end{equation*} \end{lemma}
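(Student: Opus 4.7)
The plan is to obtain this formula as a direct application of the Riemann--Hurwitz genus formula to the cyclic Galois extension $L_i/L_{i-1}$, with the unified expression for the differential exponent recorded in the preceding paragraph doing the real work. I would begin with
\[
2g_{L_i} - 2 \;=\; n_i(2g_{L_{i-1}} - 2) \;+\; \deg_{L_i}\!\mathfrak{D}(L_i/L_{i-1}),
\]
where $\mathfrak{D}(L_i/L_{i-1})$ denotes the different of the extension. Only places $\mathfrak{p}_{i-1} \in \mathbb{P}_{L_i}$ contribute, so the problem reduces to computing the degree of the ramification contribution at each such place.

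Next, I would exploit the fact that $L_i/L_{i-1}$ is Galois: for every ramified $\mathfrak{p}_{i-1}$, all places $\mathfrak{p}_i \mid \mathfrak{p}_{i-1}$ share a common ramification index $e(\mathfrak{p}_i|\mathfrak{p}_{i-1})$, common residue degree $f(\mathfrak{p}_i|\mathfrak{p}_{i-1})$, and common differential exponent $d(\mathfrak{p}_i|\mathfrak{p}_{i-1})$, with exactly $n_i/(e\cdot f)$ such places above $\mathfrak{p}_{i-1}$. Using $\deg \mathfrak{p}_i = f(\mathfrak{p}_i|\mathfrak{p}_{i-1}) \cdot d_{\mathfrak{p}_{i-1}}$, the contributions from the $\mathfrak{p}_i$ above a fixed $\mathfrak{p}_{i-1}$ collapse to
\[
d(\mathfrak{p}_i|\mathfrak{p}_{i-1}) \cdot \frac{n_i}{e(\mathfrak{p}_i|\mathfrak{p}_{i-1})} \cdot d_{\mathfrak{p}_{i-1}}.
\]

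The final step is to substitute the uniform identity $d(\mathfrak{p}_i|\mathfrak{p}_{i-1}) = (e(\mathfrak{p}_i|\mathfrak{p}_{i-1}) - 1)\,J_{\mathcal{P},i}$ noted just before the lemma; this identity is what unifies the two cases, since for a Kummer step the ramification is tame with a single jump $J_{\mathcal{P},i} = 1$ (so the formula reduces to the classical $d = e-1$, consistent with Lemma \ref{boseck}), while for an Artin--Schreier step one has $e = p$ and $J_{\mathcal{P},i} = 1 - v_{\mathcal{P},i}$, recovering the exponent $(p-1)(1-v_{\mathcal{P},i})$ from \S 3.1. Dividing the resulting Riemann--Hurwitz identity by two and solving for $g_{L_i}$ yields the claimed formula.

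The main point---really the only nontrivial input---is the uniform expression for the differential exponent as $(e-1)J$. Once that is in hand, the lemma is pure bookkeeping: Galois-theoretic identification of $e$, $f$, and the number of places above $\mathfrak{p}_{i-1}$, and the substitution into Riemann--Hurwitz. No additional approximation, local analysis, or case distinction is required, which is precisely the reason we have already established the formulas for $d(\mathfrak{p}_i|\mathfrak{p}_{i-1})$ in the Kummer and Artin--Schreier subsections separately.
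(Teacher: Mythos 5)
Your proposal is correct and follows exactly the route the paper takes: the paper states the uniform identity $d(\mathfrak{p}_{i}|\mathfrak{p}_{i-1})=(e(\mathfrak{p}_{i}|\mathfrak{p}_{i-1})-1)J_{\mathcal{P},i}$ in the sentence immediately preceding the lemma and then simply invokes Riemann--Hurwitz, which is precisely your argument. The only difference is that you spell out the Galois bookkeeping (common $e$, $f$, and the count $n_i/(ef)$ of places above $\mathfrak{p}_{i-1}$) that the paper leaves implicit.
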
 The genus formula for $L_i/L_{i-1}$ ($i=1,\ldots,r$) allows us to obtain a concise genus formula for the Galois tower $L/K$. The following result accomplishes exactly this. \begin{lemma}\label{RH} \begin{enumerate}[(i)] 
\item The differential exponent $d(\mathfrak{P}|\mathcal{P})$ of $\mathfrak{P}|\mathcal{P}$ in $L/K$ is given by \begin{align} \notag \begin{array}{ll} d(\mathfrak{P}|\mathcal{P}) & = \sum_{i\in R_\mathcal{P}} e(\mathfrak{P}|\mathfrak{p}_{i}) \cdot (e(\mathfrak{p}_{i}|\mathfrak{p}_{i-1})-1)\cdot J_{\mathcal{P},i},\end{array} \end{align} where $R_\mathcal{P} \subset \{0,1,\ldots,r-1\}$ denotes the set of indices such that the place $\mathfrak{p}_{i-1}$ is ramified in $L_i/ L_{i-1}$. 
\item The Riemann-Hurwitz formula for $L/K$ may be written as \begin{equation*}\begin{array}{lll}g_L&=1 - [L:K] + \frac{1}{2} \sum_{\mathcal{P}\in \mathbb{P}_K} \frac{[L:K]}{e(\mathfrak{P}|\mathcal{P})}  \cdot d_{\mathcal{P}} \cdot\left[ \sum_{i\in R_\mathcal{P}}  e(\mathfrak{P}|\mathfrak{p}_{i}) \cdot (e(\mathfrak{p}_{i}|\mathfrak{p}_{i-1})-1)\cdot J_{\mathcal{P},i}\right]\end{array}\end{equation*} \end{enumerate} \end{lemma}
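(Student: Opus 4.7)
The plan is to obtain (i) by iterating the standard transitivity rule for differential exponents up the tower, and then to derive (ii) by substituting (i) into Riemann--Hurwitz for $L/K$, exploiting that $K = k(x)$ is rational and the extension is Galois.

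For (i), I would recall the chain rule for differential exponents: for any separable tower $E \subseteq F \subseteq M$ of function fields and places $\mathfrak{q}\mid\mathfrak{p}\mid\mathcal{Q}$, one has
$$d(\mathfrak{q}\mid\mathcal{Q}) \;=\; e(\mathfrak{q}\mid\mathfrak{p})\, d(\mathfrak{p}\mid\mathcal{Q}) + d(\mathfrak{q}\mid\mathfrak{p}).$$
Applying this inductively along the chain $\mathfrak{P} = \mathfrak{p}_r \mid \mathfrak{p}_{r-1} \mid \cdots \mid \mathfrak{p}_0 = \mathcal{P}$, a straightforward telescoping gives
$$d(\mathfrak{P}\mid\mathcal{P}) \;=\; \sum_{i=1}^r e(\mathfrak{P}\mid\mathfrak{p}_i)\, d(\mathfrak{p}_i\mid\mathfrak{p}_{i-1}).$$
By the Lemma immediately preceding, each step contributes $d(\mathfrak{p}_i\mid\mathfrak{p}_{i-1}) = (e(\mathfrak{p}_i\mid\mathfrak{p}_{i-1}) - 1) J_{\mathcal{P},i}$, since each $L_i/L_{i-1}$ is either Kummer or Artin--Schreier and thus has a single jump in its ramification filtration. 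When $\mathfrak{p}_{i-1}$ is unramified in $L_i/L_{i-1}$, the factor $e(\mathfrak{p}_i\mid\mathfrak{p}_{i-1}) - 1$ vanishes, so only the indices in $R_{\mathcal{P}}$ contribute; this yields (i).

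For (ii), I would apply Riemann--Hurwitz to $L/K$. Since $g_K = 0$,
$$2g_L - 2 \;=\; -2[L:K] + \sum_{\mathfrak{P}} d(\mathfrak{P}\mid\mathcal{P})\, d_{\mathfrak{P}}.$$
Grouping places of $L$ by their image in $K$ and using that the extension is Galois---so the ramification index $e(\mathfrak{P}\mid\mathcal{P})$, residue degree $f(\mathfrak{P}\mid\mathcal{P})$, and differential exponent $d(\mathfrak{P}\mid\mathcal{P})$ depend only on $\mathcal{P}$, while $\sum_{\mathfrak{P}\mid\mathcal{P}} 1 = [L:K]/(ef)$---together with $d_{\mathfrak{P}} = f(\mathfrak{P}\mid\mathcal{P})\, d_{\mathcal{P}}$, the inner sum collapses to
$$\sum_{\mathcal{P} \in \mathbb{P}_K} \frac{[L:K]}{e(\mathfrak{P}\mid\mathcal{P})}\, d(\mathfrak{P}\mid\mathcal{P})\, d_{\mathcal{P}}.$$
Substituting the formula from (i) for $d(\mathfrak{P}\mid\mathcal{P})$ and dividing by two yields the asserted expression for $g_L$.

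The argument is essentially bookkeeping, so I do not anticipate a real technical obstacle. The only points requiring minor care are the validity of the transitivity rule for differents at every level (automatic from separability of each step $L_i/L_{i-1}$) and the conversion between sums over $\mathbb{P}_L$ and $\mathbb{P}_K$ via the Galois identity $\sum_{\mathfrak{P}\mid\mathcal{P}} ef = [L:K]$.
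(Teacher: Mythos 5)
Your proposal is correct and follows essentially the same route as the paper: transitivity of the different along the chain $\mathfrak{P}=\mathfrak{p}_r\mid\cdots\mid\mathfrak{p}_0=\mathcal{P}$ combined with the single-jump formula $d(\mathfrak{p}_i\mid\mathfrak{p}_{i-1})=(e(\mathfrak{p}_i\mid\mathfrak{p}_{i-1})-1)J_{\mathcal{P},i}$ for (i), then Riemann--Hurwitz with the Galois identity $\sum_{\mathfrak{P}\mid\mathcal{P}}ef=[L:K]$ for (ii). No gaps; the bookkeeping matches the paper's argument step for step.
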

\begin{proof} For all $i =1,\ldots,r$, we have the ramification formula \begin{equation*}e(\mathfrak{p}_{i}| \mathcal{P} ) = e(\mathfrak{p}_{i}|\mathfrak{p}_{i-1}) e(\mathfrak{p}_{i-1} |  \mathcal{P})\end{equation*} and differential exponent \begin{equation*}d( \mathfrak{p}_{i} |  \mathcal{P}) = e ( \mathfrak{p}_{i} | \mathfrak{p}_{i-1}) d( \mathfrak{p}_{i-1} |  \mathcal{P}) + d( \mathfrak{p}_{i} | \mathfrak{p}_{i-1}).\end{equation*} From previous observations, have $d( \mathfrak{p}_{i} | \mathfrak{p}_{i-1})= (e(\mathfrak{p}_{i}|\mathfrak{p}_{i-1})-1)\cdot J_{\mathcal{P},i}$ for each $i=1,\ldots,r$. Thus, the formula for the differential exponent of $\mathfrak{P}|\mathcal{P}$ may be expressed as \begin{equation*} d(\mathfrak{P}|\mathcal{P}) = \sum_{i\in R_\mathcal{P}} e(\mathfrak{P}|\mathfrak{p}_{i}) (e(\mathfrak{p}_{i}|\mathfrak{p}_{i-1})-1)\cdot J_{\mathcal{P},i},\end{equation*} proving (i). For (ii), by definition, the different $\mathcal{D}_{L/K}$ of $L$ over $K$ is equal to \begin{equation*}\mathcal{D}_{L/K}= \prod_{\mathcal{P}\in \mathbb{P}_K} \mathcal{P}r_{\mathcal{P}}^{d(\mathfrak{P}| \mathcal{P})},\end{equation*} where $\mathcal{P}r_{\mathcal{P}}$ denotes the product of all places of $L$ above $\mathcal{P}$. As $L/K$ is Galois, the inertia degree and ramification index at a place $\mathfrak{P}$ of $L$ above $\mathcal{P} \in \mathbb{P}_K$ is independent of the choice of $\mathfrak{P}$. Hence, the product of the inertia degree of $\mathfrak{P}|\mathcal{P}$ with the number of places of $L$ above $\mathcal{P}$ is equal to $[L:K]/e(\mathfrak{P}|\mathcal{P})$ \cite[Corollary 5.2.23]{Vil}. Furthermore, the differential exponent in $L$ at a place $\mathfrak{P}|\mathcal{P}$ is also independent of the choice of $\mathfrak{P}$. With the help of \cite[Corollary 3.1.14]{Sti}, the Riemann-Hurwitz formula for $L/K$ may thus be written \cite[Corollary 9.4.3]{Vil} as \begin{equation*}g_L=1 - [L:K] + \frac{1}{2} \sum_{\mathcal{P}\in \mathbb{P}_K} \frac{[L:K]}{e(\mathfrak{P}|\mathcal{P})}  \cdot d_{\mathcal{P}} \cdot\left[ \sum_{i\in R_\mathcal{P}}  e(\mathfrak{P}|\mathfrak{p}_{i}) (e(\mathfrak{p}_{i}|\mathfrak{p}_{i-1})-1)\cdot J_{\mathcal{P},i}\right],\end{equation*} as desired.\end{proof} 

This is a convenient formula for the genus, as it is expressed only in terms of the valuations of global standard form generators and ramification data for the tower. 

\begin{remark} Lemma \ref{RH} remains valid if $L/K$ is separable but not Galois, provided ramification indices, inertia degrees, and differential exponents are equal for all places of $L$ above a given place of $K$, for all places of $K$ which ramify in $L$. \end{remark}

\section{Basis of holomorphic differentials} 
In this section, we provide an explicit description of the $k$-basis of $\Omega_L$, which is our first main result. This is done strictly in terms of the ramification data and valuations of global standard form generators of the tower $L/K$. Our construction additionally requires the modified Boseck basis for Kummer extensions introduced in Lemma \ref{boseck}, which allows the tower to consist of steps of both Artin-Schreier and Kummer extensions.

\begin{theorem} \label{basis}
Let $L$ be a global standard function field (Definition \ref{Boseckff}). For each $i=1,\ldots,r$, we suppose that each $y_i$ is either a Kummer or Artin-Schreier generator in global standard form. Given a place $\mathcal{P} \in \mathbb{P}_K$, let $R_{ \mathcal{P}}= \{i \in \{1, \ldots , r\}, \mathfrak{p}_{i-1}\in \mathbb{P}_{L_i} \}$ the set of indices $i=1,\ldots,r$ such that $\mathfrak{p}_{i-1}$ ramifies in $L_i/ L_{i-1}$, let \begin{equation*}R_{p,\mathcal{P}}= \{ i\in \{ 1, \ldots , r\}| m_i=p, \mathfrak{p}_{i-1} \in \mathbb{P}_{L_i}\}\end{equation*} denote the $p$-subset of $R_{ \mathcal{P}}$, i.e., those indices such that $m_i=p$, and let \begin{equation*}R_{o, \mathcal{P}}= \{ i\in \{ 1, \ldots , r\}| m_i \neq p, \mathfrak{p}_{i-1} \in \mathbb{P}_{L_i}\}\end{equation*} denote the prime-to-$p$ subset of $R_{ \mathcal{P}}$, i.e., those such that $m_i\neq p$. We denote $e_{\mathcal{P}}:= e( \mathfrak{P}|\mathcal{P})$ and $v_{\mathcal{P},i} := v_{\mathfrak{p}_{i}} (y_i)$, where the valuation of $y_i$ is viewed as existing in $L_{i}$. For each $i=1,\ldots,r$ and $\mu_i$, let $\Delta_{\mathfrak{p}_i}^{\mu_i}$ be defined according to the following formula: \begin{equation} \label{delta} \Delta_{\mathfrak{p}_i}^{\mu_i} =  \left\{ \begin{array}{lll}  (p-1 - \mu_i )\cdot (-v_{\mathcal{P},i}) +(p-1) & if \ i \in R_{p,\mathcal{P}}  \\ \mu_i  v_{\mathcal{P},i} +(e(\mathfrak{p}_i|\mathfrak{p}_{i-1})-1) & if \ i \in R_{o,\mathcal{P}}\\ 0 & otherwise. \end{array} \right. \end{equation}

Set $\mu = (\mu_1, \ldots , \mu_r)$. Let $\lambda_{\mathcal{P}}^{\mu} $ and $\rho_{\mathcal{P}}^{\mu}$ be defined by the equation \begin{equation} \label{modular} e_{\mathcal{P}} \lambda_{\mathcal{P}}^{\mu} +\rho_{\mathcal{P}}^{\mu}= \sum_{i=1}^r e(\mathfrak{P}| \mathfrak{p}_i) \Delta_{\mathfrak{p}_i}^{\mu_i},\;\;\;\;\;\;0\leq \rho_{\mathcal{P}}^{\mu}\leq e_{\mathcal{P}}-1.\end{equation} Also, let \begin{equation*}g_{\mu} (x) = \prod_{\mathcal{P}\in \mathbb{P}_K} (p_{\mathcal{P}}(x))^{\lambda_{\mathcal{P}}^{\mu}}.\end{equation*} Let ${y}^{{\mu}}= \prod_{j=1}^r y_{j}^{\mu_j}$ and \begin{equation*}t^{\mu} = \sum_{\mathcal{P} \in \mathbb{P}_K} d_{\mathcal{P}} \left(\lambda_{\mathcal{P}}^{\mu} -  \sum_{i\in R_{o,\mathcal{P}}} \frac{ e ( \mathfrak{P} | \mathfrak{p}_{i})}{e_{\mathcal{P}}}  v_{\mathcal{P},i}\mu_i\right),\end{equation*} where $d_\mathcal{P}$ denotes the degree of the place $\mathcal{P} \in \mathbb{P}_K$ (\S 2). Define $\Gamma:= \prod_{i=1}^r \{0 , \ldots , m_i-1\} - \mu^0$, where $\mu^0= (\mu_1^0 , \ldots , \mu_r^0 )$ with $\mu_i^0 = 0 $ if $m_i \neq p$, and $\mu^0_i=m_i-1=p-1$ otherwise. Then the set
\begin{equation*}\mathfrak{B}_L:=\left\{ x^\nu [g_{\mu}(x)]^{-1} y^{\mu} dx \;\big|\; 0\leq \nu \leq t^\mu-2,  \ \mu = (\mu_1, \ldots, \mu_r ) \in \Gamma \right\}\end{equation*} forms a $k$-basis of $\Omega_L$.
\end{theorem}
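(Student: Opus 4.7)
The plan is to verify the theorem in four steps: (i) a local computation of $v_\mathfrak{P}(\omega)$ at each finite place $\mathfrak{P}$ of $L$, where $\omega = x^\nu [g_\mu(x)]^{-1} y^\mu dx$, showing holomorphy there; (ii) a parallel computation at $\mathfrak{P}_\infty$ that converts the bound $\nu \leq t^\mu - 2$ into the holomorphy condition; (iii) a direct argument for $k$-linear independence of the proposed basis; and (iv) counting $|\mathfrak{B}_L|$ and matching it with $g_L$ via the tower Riemann-Hurwitz formula of Lemma \ref{RH}(ii).

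For (i), I would decompose
\[
v_\mathfrak{P}(\omega) = v_\mathfrak{P}(x^\nu) - \lambda_\mathcal{P}^\mu e_\mathcal{P} + \sum_{i=1}^r \mu_i\, e(\mathfrak{P}|\mathfrak{p}_i)\, v_{\mathcal{P},i} + d(\mathfrak{P}|\mathcal{P}),
\]
using $v_\mathcal{P}(dx) = 0$ at finite places of $K$. Substituting Lemma \ref{RH}(i) for the differential exponent and splitting the inner sum into Artin-Schreier and Kummer contributions, one checks that the definition \eqref{delta} of $\Delta_{\mathfrak{p}_i}^{\mu_i}$ is precisely calibrated so that, for each $i \in R_\mathcal{P}$, the quantity $e(\mathfrak{P}|\mathfrak{p}_i)\Delta_{\mathfrak{p}_i}^{\mu_i}$ matches the combined contribution of $v_\mathfrak{P}(y_i^{\mu_i})$ and the step-$i$ piece of $d(\mathfrak{P}|\mathcal{P})$; in the Artin-Schreier case one uses $J_{\mathcal{P},i} = 1 - v_{\mathcal{P},i}$, in the Kummer case $J_{\mathcal{P},i} = 1$. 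Combined with \eqref{modular}, this reduces to $v_\mathfrak{P}(\omega) = \rho_\mathcal{P}^\mu + v_\mathfrak{P}(x^\nu) \geq 0$.

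For (ii), at the unramified place $\mathfrak{P}_\infty$ we have $v_{\mathfrak{P}_\infty}(dx) = -2$, $v_{\mathfrak{P}_\infty}(x^\nu) = -\nu$, and $v_{\mathfrak{P}_\infty}(g_\mu^{-1}) = \sum_\mathcal{P} d_\mathcal{P} \lambda_\mathcal{P}^\mu$. The remaining task is to establish
\[
\sum_{i=1}^r \mu_i\, v_{\mathfrak{P}_\infty}(y_i) = -\sum_{\mathcal{P}\in \mathbb{P}_K} d_\mathcal{P} \sum_{i \in R_{o,\mathcal{P}}} \frac{e(\mathfrak{P}|\mathfrak{p}_i)}{e_\mathcal{P}}\, v_{\mathcal{P},i}\, \mu_i,
\]
which delivers $v_{\mathfrak{P}_\infty}(\omega) = t^\mu - 2 - \nu$. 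This identity follows from $\deg(\operatorname{div}(y_i)) = 0$ in $L_i$ for each $i$, together with the global standard form conditions (which pin down $v_{\mathfrak{p}_{i-1}}(c_i)$ at unramified finite places and normalize the behavior of $y_i$ above $\mathcal{P}_\infty$ in both the Kummer and Artin-Schreier cases, so that the Artin-Schreier indices drop out of the right-hand side). Linear independence over $k$ of the resulting differentials is then immediate: $\{y^\mu : \mu \in \prod_i \{0,\ldots,n_i-1\}\}$ is a $K$-basis of $L$ (each cyclic step has full degree $n_i$), and $\{x^\nu\}_{\nu \geq 0}$ is $k$-linearly independent in $K=k(x)$, so a vanishing $k$-linear combination $\sum_{\mu,\nu} c_{\mu,\nu}\omega$ forces each $c_{\mu,\nu} = 0$.

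The main obstacle is step (iv). One first shows that $t^\mu \geq 1$ for all $\mu \in \Gamma$, with $t^\mu = 0$ occurring precisely at the excluded point $\mu^0$: the hypothesis that no $d > 1$ divides $\gcd(v_{\mathfrak{p}_{i-1}}(c_i), n_i)$ at all ramified $\mathfrak{p}_{i-1}$ of a Kummer step is invoked here in exactly the same way as in the proof of Lemma \ref{kummer}, ruling out spurious unramified Kummer subextensions that would force $t^\mu = 0$ for some other $\mu$. Writing $|\mathfrak{B}_L| = \sum_{\mu \in \Gamma}(t^\mu - 1)$ (the $t^\mu = 1$ values contribute zero, as no $\nu$ satisfies the bound), the concluding identity $\sum_{\mu \in \Gamma}(t^\mu - 1) = g_L$ is then proved by rearranging the sum place-by-place: for each place $\mathcal{P}$ with $R_\mathcal{P} \neq \emptyset$, the residues $\rho_\mathcal{P}^\mu$ sweep through complete residue systems modulo $e_\mathcal{P}$ as the relevant $\mu_i$ vary, by the coprimality $\gcd(v_{\mathcal{P},i}, e(\mathfrak{p}_i|\mathfrak{p}_{i-1}))=1$ inherent in global standard form at each step. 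Accumulating the resulting local sums matches exactly the right-hand side of Lemma \ref{RH}(ii), yielding $g_L$. This intricate combinatorial rearrangement, which generalizes the single-step calculation in Lemma \ref{kummer}, is the technical heart of the proof.
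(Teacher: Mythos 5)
Your proposal is correct and follows essentially the same route as the paper: the paper computes the divisor of $[g_\mu(x)]^{-1}y^\mu\,dx$ globally (getting exponent $\rho_{\mathcal{P}}^{\mu}$ at ramified places and $t^\mu-2$ at infinity, which is your steps (i)--(ii) in valuation-theoretic language), excludes $t^\mu=0$ for $\mu\in\Gamma$ via the same no-common-divisor hypothesis, and evaluates $\sum_{\mu\in\Gamma}(t^\mu-1)=g_L$ by the same complete-residue-system argument modulo $e_{\mathcal{P}}$ combined with Lemma \ref{RH}(ii). The only cosmetic differences are that you make the degree-zero argument at $\mathcal{P}_\infty$ and the $K$-basis argument for linear independence explicit, where the paper leaves them implicit.
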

\begin{proof}
 The divisor of ${y}^{{\mu}}$ in $L$ is given by \begin{equation*}({y}^{{\mu}})_L =\mathfrak{A}_{{y}}^{{\mu}}\cdot \prod_{\mathcal{P} \in \mathbb{P}_K} \mathcal{P}r^{\sum_{i\in R_{\mathcal{P}}} \mu_i e(\mathfrak{P} | \mathfrak{p}_{i} )v_{\mathcal{P},i}},\end{equation*} for some integral divisor $\mathfrak{A}_{{y}}^{{\mu}}$ of $L$, where $\mathcal{P}r$ is the product of the places above $\mathcal{P}$ in $L$. It follows that the divisor in $L$ of the differential ${{y}}^{{\mu}} dx$ is given by \begin{equation*}({y}^{{\mu}}dx)_L=  \mathfrak{A}_{{y}}^{{\mu}} \cdot \prod_{\mathcal{P} \in \mathbb{P}_K} \mathcal{P}r^{\sum_{i\in R_{\mathcal{P}}} e(\mathfrak{P}| \mathfrak{p}_i)\Delta_{\mathfrak{p}_i}^{\mu_i}} (\text{Con}_{K/L} ( \mathcal{P}_\infty))^{-2}.\end{equation*} As the quantities $\lambda_{\mathcal{P}} ^{{\mu}}$ and $\rho_{\mathcal{P}}^\mu$ are defined according to \eqref{modular}, multiplication of ${y}^{{\mu}}dx$ by $[g_{{\mu}}(x)]^{-1}= \prod_{\mathcal{P}\in \mathbb{P}_K} (p_{\mathcal{P}}(x))^{-\lambda_{\mathcal{P}}^{{\mu}}}$ yields the following divisor in $L$: \begin{equation*}([g_{{\mu}}(x)]^{-1} {y}^{{\mu}} dx)_L = \mathfrak{A}_{{y}}^{{\mu}} \cdot \prod_{\mathcal{P}\in \mathbb{P}_K} \mathcal{P}r^{ \rho_{\mathcal{P}}^{{\mu}} }  \cdot (\text{Con}_{K/L} ( \mathcal{P}_\infty))^{  \sum_{\mathcal{P} \in \mathbb{P}_K} d_{\mathcal{P}} \left(\lambda_{\mathcal{P}}^{\mu} -  \sum_{i\in R_{o,\mathcal{P}}} \frac{ e ( \mathfrak{P} | \mathfrak{p}_{i})}{e_{\mathcal{P}}}  v_{\mathcal{P},i}\mu_i\right) -2}.\end{equation*} Thus, the differential $[g_{{\mu}}(x)]^{-1} {y}^{{\mu}} dx$ is holomorphic if, and only if, \begin{equation*}t^{{\mu}} :=  \sum_{\mathcal{P} \in \mathbb{P}_K} d_{\mathcal{P}} \left(\lambda_{\mathcal{P}}^{\mu} -  \sum_{i\in R_{o,\mathcal{P}}} \frac{ e ( \mathfrak{P} | \mathfrak{p}_{i})}{e_{\mathcal{P}}}  v_{\mathcal{P},i}\mu_i\right) \geq 2.\end{equation*} 
 Therefore, as the set $\{x^\nu y^\mu \;|\; \ 0\leq \nu \leq t^\mu-2,\ \mu \in \Gamma \}$ is linearly independent over $k$, the $k$-linearly independent set \begin{equation*}\mathfrak{B}_L = \left\{ x^\nu [g_{\mu}(x)]^{-1} y^{\mu} dx \;\big|\; 0\leq \nu \leq t^\mu-2, \ \mu = (\mu_1, \ldots, \mu_r ) \in \Gamma \right\}\end{equation*} consists solely of holomorphic differentials. Furthermore, we have that $t^{\mu}\geq 1$ for all $\mu \in \Gamma$: By construction, the integer \begin{equation*}\lambda_{\mathcal{P}}^{\mu} -  \sum_{i\in R_{o,\mathcal{P}}} \frac{ e ( \mathfrak{P} | \mathfrak{p}_{i})}{e_{\mathcal{P}}}  v_{\mathcal{P},i}\mu_i = \frac{1}{e_{\mathcal{P}}} \left( e_{\mathcal{P}}-1 - \rho_{\mathcal{P}}^{\mu} + \sum_{i \in R_{p, \mathcal{P}}} (p-1- \mu_i)\cdot (-v_{\mathcal{P}, i}) \right)\geq 0.\end{equation*} Also by construction, $e_{\mathcal{P}}-1 - \rho_{\mathcal{P}}^{\mu} \geq 0$ and $ (p-1-\mu_i) \cdot (-v_{\mathcal{P}, i}) \geq 0$. It follows that $ t^\mu= 0$ if and only if $(p-1- \mu_i)\cdot (-v_{\mathcal{P}, i})=0$, i.e., $\mu_i=p-1$, for all $i \in R_{p, \mathcal{P}}$, and $ \rho_{\mathcal{P}}^{\mu}=e_{\mathcal{P}}-1 $, for any $\mathcal{P} \in \mathbb{P}_K$. For any elements \begin{equation*}s= (s_i)_{i\in R_{0, \mathcal{P}}} \in S:=\prod_{i\in R_{0, \mathcal{P}}} \{0, \ldots , m_i /e (\mathfrak{p}_i|\mathfrak{p}_{i-1})-1\},\end{equation*} we consider the set \begin{equation*}\Gamma_{s, \mathcal{P}} :=\prod_{i \in R_{p, \mathcal{P}}}\{ 0 , \ldots , m_i-1\} \prod_{i \in R_{0, \mathcal{P}}}\{ s_ie (\mathfrak{p}_i|\mathfrak{p}_{i-1}), \ldots , (s_i+1)e (\mathfrak{p}_i|\mathfrak{p}_{i-1})-1\}.\end{equation*} As the elements $v_{\mathcal{P}, i}= e(\mathfrak{p}_i|\mathfrak{p}_{i-1})v_{\mathfrak{p}_{i-1}}(c_i)/m_i$ are coprime with $m_i$ for each $i = 1,\ldots,r$, $v_{\mathcal{P},i}> 0$ for each $i = 1,\ldots,r$ such that $L_i/L_{i-1}$ is Kummer, and $-v_{\mathcal{P}, i}>0$ for each $i = 1,\ldots,r$ such that $L_i/L_{i-1}$ is Artin-Schreier, at any ramified place $\mathfrak{p}_{i-1}$ of $L_i/L_{i-1}$, it follows that for each $s\in S$, the elements $\rho_\mathcal{P}^\mu$ form a complete set of residues modulo $e_{\mathcal{P}}$ as $\mu$ runs through all possible values in the set $\Gamma_{s,\mathcal{P}}$. 
To see this, we have the identity \begin{equation*}\rho_{\mathcal{P}}^{\mu}= e_{\mathcal{P}} \left\langle\frac{\sum_{i=1}^r e(\mathfrak{P}| \mathfrak{p}_i)\Delta_{\mathfrak{p}_i}^{\mu_i}}{e_{\mathcal{P}}}\right\rangle,\end{equation*}
where for an element $x \in \mathbb{R}$, $\langle x \rangle$ denotes the fractional part of $x$. Also, by construction, the set \begin{equation*}\left\{\sum_{i=1}^r e(\mathfrak{P}| \mathfrak{p}_i)\Delta_{\mathfrak{p}_i}^{\mu_i}\right\}_{\mu \in \Gamma_{s,\mathcal{P}}}\end{equation*} forms a complete set of residues modulo $e_{\mathcal{P}}$. Therefore, the remainder $\rho_{\mathcal{P}}^\mu$ assumes the value $e_{\mathcal{P}}-1$ exactly $|S|= \prod_{i\in R_{0, \mathcal{P}}} m_i /e (\mathfrak{p}_i|\mathfrak{p}_{i-1})$ times, and this occurs precisely when the values of $\mu_i$ are multiples of $e (\mathfrak{p}_i|\mathfrak{p}_{i-1})$, for all $i \in R_{0, \mathcal{P}}$. The number of instances where this occurs is equal to $|S|$, which subsumes all possible values of $\mu$ for which $e_{\mathcal{P}}-1= \rho_{\mathcal{P}}^{\mu}$. 

As argued in the proof of Lemma 3.6, in order to have $t^{\mu}=0$, it is necessary that $\rho_\mathcal{P}^\mu=e_\mathcal{P}-1$, for any $\mathcal{P}\in \mathbb{P}_K$. By assumption, for any $L_i/L_{i-1}$ which are Kummer, not all ramified valuations of $c_i$ share a prime factor with $m_i$, whence $t^{\mu}=0$ occurs only when $\mu_i=0$ for all $i \in R_{0, \mathcal{P}}$ and $\mu_i=p-1$ for all $i \in R_{p, \mathcal{P}}$. Furthermore, for any $\mu$ so that $t^{{\mu}} = 1$, there exist no holomorphic differentials of the form prescribed in the definition of $\mathfrak{B}_L$.

By the previous argument, we have \begin{equation*}|\mathfrak{B}_L| = \sum_{{\mu} \in \Gamma} (t^{{\mu}}-1).\end{equation*} We must now show that this quantity is equal to the genus $g_L$ of $L$. By definition of $\Delta_{\mathcal{P}}^{\mu_i}$ \eqref{delta}, we have \begin{align*} \sum_{{\mu} \in {\Gamma}} \lambda_{\mathcal{P}}^{{\mu}}& =\sum_{{\mu} \in {\Gamma}} \left\lfloor \frac{\sum_{i=1}^r e(\mathfrak{P}| \mathfrak{p}_i) \Delta_{\mathfrak{p}_i}^{\mu_i}}{e_{\mathcal{P}}}\right\rfloor \\ &= \sum_{{\mu} \in {\Gamma}} \left( \left(\frac{\sum_{i=1}^r e(\mathfrak{P}| \mathfrak{p}_i) \Delta_{\mathfrak{p}_i}^{\mu_i}}{e_{\mathcal{P}}}\right) -  \left\langle\frac{\sum_{i=1}^r e(\mathfrak{P}| \mathfrak{p}_i)\Delta_{\mathfrak{p}_i}^{\mu_i}}{e_{\mathcal{P}}}\right\rangle \right).\end{align*} Via the change of index $\mu_i \rightarrow p-1 - \mu_i$ for all $i \in R_{p, \mathcal{P}}$ (see also \cite[Satz 15]{Bos}), which does not alter the value of the sum, we may write \begin{equation*}\begin{array}{lll} \sum_{{\mu} \in {\Gamma}} \lambda_{\mathcal{P}}^{{\mu}} =& \sum_{{\mu} \in {\Gamma}} \Bigg[ \left(\frac{\left[\sum_{i\in R_{0, \mathcal{P}}}e(\mathfrak{P}| \mathfrak{p}_i)\mu_{i} v_{\mathcal{P},i}-\sum_{i\in R_{p, \mathcal{P}}}e(\mathfrak{P}| \mathfrak{p}_i)\mu_{i} v_{\mathcal{P},i}\right] + e_{\mathcal{P}}-1}{e_{\mathcal{P}}}\right) \\ & -  \left\langle \frac{\left[\sum_{i\in R_{0, \mathcal{P}}}e(\mathfrak{P}| \mathfrak{p}_i)\mu_{i} v_{\mathcal{P},i}-\sum_{i\in R_{p, \mathcal{P}}}e(\mathfrak{P}| \mathfrak{p}_i)\mu_{i} v_{\mathcal{P},i}\right] + e_{\mathcal{P}}-1}{e_{\mathcal{P}}}\right\rangle \Bigg].\end{array}\end{equation*} By construction, for each $s \in S$, the set \begin{equation*}\left\{\left.\left[\sum_{i\in R_{0, \mathcal{P}}}e(\mathfrak{P}| \mathfrak{p}_i)\mu_{i} v_{\mathcal{P},i}-\sum_{i\in R_{p, \mathcal{P}}}e(\mathfrak{P}| \mathfrak{p}_i)\mu_{i} v_{\mathcal{P},i}\right] +e_{\mathcal{P}}-1\;\right|\; (\mu_{1},\ldots,\mu_{r}) \in \Gamma_{s,\mathcal{P}} \right\}\end{equation*} forms a complete system of residues modulo $e_{\mathcal{P}}$. As $|S|=\prod_{i \in R_{0, \mathcal{P}}} m_i/ e(\mathfrak{p}_i|\mathfrak{p}_{i-1})$, we therefore find that \begin{align*} \sum_{{\mu} \in {\Gamma}} \left(\lambda_{\mathcal{P}}^{{\mu}} -\sum_{i\in R_{0, \mathcal{P}}} \frac{ e ( \mathfrak{P} | \mathfrak{p}_{i})}{e_\mathcal{P}}  v_{\mathcal{P},i}\mu_i \right)&=
 \sum_{\mu \in \Gamma }\left[\frac{-\sum_{i\in R_{p, \mathcal{P}}} e(\mathfrak{P}|\mathfrak{p}_i) v_{\mathcal{P},i}\mu_{i} }{e_\mathcal{P}}\right] +\frac{[L:K]}{e_\mathcal{P}} \cdot \frac{e_\mathcal{P}-1}{2} \\ &= \frac{[L:K]}{e_\mathcal{P}} \cdot \frac{1}{2}\left(\left[-\sum_{i\in R_{p, \mathcal{P}}} e(\mathfrak{P}|\mathfrak{p}_i)v_{\mathcal{P},i}(p-1)\right]+ e_\mathcal{P} -1 \right).  \end{align*} We note that \begin{equation*}e_\mathcal{P}-1 = \sum_{i=1}^r e(\mathfrak{P}|\mathfrak{p}_i)(e(\mathfrak{p}_i|\mathfrak{p}_{i-1})-1) = \sum_{i \in R_P} e(\mathfrak{P}|\mathfrak{p}_i)(e(\mathfrak{p}_i|\mathfrak{p}_{i-1})-1).\end{equation*} By previous observations, for each $i \in R_{p,P}$, we have $J_{\mathcal{P},i} = 1-v_{\mathcal{P},i}$, and for each $i \in R_{0,\mathcal{P}}$, we have $J_{\mathcal{P},i} = 1$. Thus, via Lemma \ref{RH}, we obtain
\begin{align*} \sum_{\mu\in \Gamma}t^{\mu}&=\sum_{\mu\in \Gamma} \left( \sum_{\mathcal{P} \in \mathbb{P}_K} d_{\mathcal{P}} \left(\lambda_{\mathcal{P}}^{\mu} -  \sum_{i\in R_{0, \mathcal{P}}} \frac{ e ( \mathfrak{P} | \mathfrak{p}_{i})}{e_\mathcal{P}}  v_{\mathcal{P},i}\mu_i\right)\right)
 \\& = \sum_{ \mathcal{P} \in \mathbb{P}_{K}} d_{\mathcal{P}} \cdot  \frac{[L:K]}{e_\mathcal{P}} \cdot \frac{1}{2}\left(\left[-\sum_{i\in R_{p, \mathcal{P}}} e(\mathfrak{P}|\mathfrak{p}_i)v_{\mathcal{P},i}(p-1)\right]+ e_\mathcal{P} -1 \right)\\ &=  \frac{1}{2} \sum_{\mathcal{P}\in \mathbb{P}_K} \frac{[L:K]}{e_\mathcal{P}} \cdot d_{\mathcal{P}} \cdot \left[\sum_{i \in R_{ \mathcal{P}}} e ( \mathfrak{P} | \mathfrak{p}_{i})  \cdot (e(\mathfrak{p}_i|\mathfrak{p}_{i-1})-1) \cdot J_{\mathcal{P},i} \right]\\
 &=  \frac{1}{2} \sum_{\mathcal{P}\in \mathbb{P}_K} \frac{[L:K]}{e_\mathcal{P}} \cdot d_{\mathcal{P}} \cdot d(\mathfrak{P}|\mathcal{P}).\end{align*}
We therefore conclude that \begin{align*} |\mathfrak{B}_L| &= \sum_{{\mu} \in \Gamma} ( t^{{\mu}}-1 )  \\& =1- [L:K]+\frac{1}{2} \sum_{\mathcal{P}\in \mathbb{P}_K} \frac{[L:K]}{e_\mathcal{P}} \cdot d_{\mathcal{P}} \cdot d(\mathfrak{P}|\mathcal{P}) \\& =g_L.\end{align*} It follows that the set $\mathfrak{B}_L$ of $k$-linearly independent homomorphic differentials forms a basis for the $k$-vector space $\Omega_L$ of holomorphic differentials of $L$. 
\end{proof}

\begin{remark}\label{equivalence}
\begin{enumerate}
\item The quantities $t^\mu$ coincide with all of the Boseck invariants previously defined (see for example \cite{Bos, KaKo, RzViMa1, RzViMa2, VaMa}). In particular, this agrees precisely with the invariants found in \cite{KaKo}, which addresses cyclic automorphisms. This can be seen via the identity \begin{equation*}\left\lfloor \frac{p^t u + v}{p^t n} \right\rfloor= \bigg\lfloor \frac{u}{n} + \frac{\big\lfloor \frac{v}{p^t} \big\rfloor}{n}\bigg\rfloor.\end{equation*} 
\item Theorem \ref{basis} remains true for towers such that the index of ramification $e(\mathfrak{P}|\mathcal{P})$, differential exponent $d(\mathfrak{P}| \mathcal{P})$, and inertia degree $f(\mathfrak{P}| \mathcal{P})$ are independent of the choice of place $\mathfrak{P}$ of $L$ above a particular place $\mathcal{P}$ of $K$, for all such places $\mathcal{P}$ which ramify in $L$.
\item As also noted in Definition \ref{Boseckff}, in order to satisfy the assumption that the place at infinity is unramified in $L/K$, it is sufficient to find any rational point (i.e., degree one place) of $K$ which is unramified in $L$.
\end{enumerate} 
\end{remark}

\begin{remark} For the results of this section, it is not necessary to construct this basis in terms of an action of a generator of a cyclic group \cite{VaMa}, nor is it necessary to assume that the field of constants $k$ is algebraically closed, which is particularly important for situations when the constant field is finite. The basis $\mathfrak{B}_L$ is defined completely in terms of the ramification data and those valuations arising from global standard form. \end{remark}

In \S 5, we show that the basis of Theorem \ref{basis} is a useful construction for determining the Galois module structure of $\Omega_L$ when $L/K$ is cyclic, and in \S 7, we show that we may use this basis to describe the Galois action on $\Omega_L$ when $L/K$ is abelian. 

 \section{Standard form}
 
The difficulties presented by global standard form for towers do not particularly depend on whether a certain step in a tower is Kummer or Artin-Schreier. For example, Kummer extensions are no easier to handle than Artin-Schreier extensions in this respect; this includes even two-step towers, where each step consists of either kind of extension. In this section, we give an illustration of several special cases where a global standard form may be obtained for towers. We note that these cases include all of the bases which we have referenced in existing literature, as well as some others. We also elaborate on the various difficulties associated with finding a generator in global standard form. Currently, the only evidence that we have for the non-existence of global standard form for generators in a tower derives from the fact that such a standard form does not exist for unramified extensions. It would be very useful if other examples of non-existence could be found, and we also leave this as an open question.
 
\subsection{Global standard form}

We now turn to the problem of when one may determine whether a given function field $L$ is a global standard function field (Definition \ref{Boseckff}). There are a few questions herein: first, when one may find the element $x$ so that the place at infinity is unramified (which Boseck assumed for his preliminary constructions \cite{Bos}), and also, when it is possible, given a function field $L$, to find a rational subextension $k(x)$ such that $L/k(x)$ is a solvable tower with with generators at each step in global standard form. Here, we give some examples of composita of function fields which satisfy the these criteria. \\
 
\begin{example}[\emph{Abelian extensions with easy conversion from composites into standard form towers}]\label{easy} Let $L/K$ be an abelian extension of a rational field $K=k(x)$, where $k$ is a perfect field of characteristic $p$. For the sake of this example, we suppose for simplicity that the place at infinity in $k[x]$ is unramified in $L/K$. If $l$ denotes the field of constants of $L$, then $L/ l(x)$ is both abelian and geometric (and again $l$ is perfect). Since generators in global standard form exist over the rational field \cite{Has}, we may thus assume without loss of generality that $L/K$ is geometric, that is, $k=l$. It is known that $L/K$ is then a compositum of cyclic extensions of $k(x)$ (see for example \cite{Kar}). We suppose that $k$ contains the $m_i$th for of unity for each positive integer $m_i$ ($(m_i,p)=1$), for $i \in \{ 1, \ldots , r \}$, where \begin{equation*}\text{Gal}(L/K)= \mathbb{Z} / p^{t_1} \mathbb{Z} \times \cdots \times \mathbb{Z} / p^{t_s} \mathbb{Z}\times \mathbb{Z} / m_1 \mathbb{Z} \times \cdots \times \mathbb{Z} / m_r \mathbb{Z},\end{equation*} so that $k$ contains sufficient roots of unity as in Definition \ref{Boseckff}. Thus, $L/K$ may be written as a compositum of generalised Artin-Schreier extensions $A_i/K$, with $\text{Gal}(A_i/K) \cong \mathbb{Z} / p^{t_i} \mathbb{Z}$ for each $i \in \{ 1, \ldots s\}$ and Kummer extensions $K_i/K$ with $\text{Gal}(K_i/K) \cong \mathbb{Z} / m_i \mathbb{Z}$ for each $i \in \{ 1, \ldots r\}$. Via Hasse \cite{Has}, every of the Kummer extensions $K_i/K$ admits a generator $y_i$ so that $y_i^{m_i} = c_i$ and $y_i$ is in global standard form. As explained in the proof of Lemma \ref{kummer}, we cannot have partial ramification and equal ramification indices. Secondly, a generalised Artin-Schreier extension is a cyclic extension of degree equal to a power of the characteristic of $k$, and it was proven by Madden \cite{Madden} that such an extension may be expressed as a tower of Artin-Schreier extensions $A_i=A_{i,m_i}/ A_{i,m_i-1}/\cdots /A_{i,1}=K$ with, for each $j=1,\ldots,m_i$, some generator $y_{i,j}$  of $A_{i,j} / A_{i, j-1}$ possessing defining equation $y_{i,j}^p - y_{i,j}= c_{i,j}$ in global standard form. 

The following structure of $L/K$ is natural for immediately arriving at global standard form in a tower from the composites of cyclic extensions over $K$. We suppose that for any $i\neq j$, $i, j \in \{ 1, \ldots , s\}$, the ramified places of $K$ in $A_i$ are distinct from those in $A_j$. For any ramified place $\mathcal{P}$ of $K$ in $L$, we denote by $\mathfrak{P}$ a place of $L$ above $\mathcal{P}$ and $e_{i,\mathcal{P}}$ the index of ramification of $\mathcal{P}$ in the compositum $A_1 \cdots A_s  K_1 \cdots K_i$. For such a place $\mathcal{P}$, we suppose that \begin{equation} \label{divcondition} m_i \nmid v_{\mathcal{P}} (c_i) e_{i-1, \mathcal{P}}\;\;\;\;\;\; (i = 1, \ldots , r),\end{equation} where $v_{\mathcal{P}} (c_i)$ is the valuation of $c_i$ in $K$. We also suppose that the quantities $v_{\mathcal{P}} (c_i) e_{i-1, \mathcal{P}}$ do not share a prime factor with $m_i$ at every such ramified place $\mathcal{P}$, which, as noted in \S 3.2, is necessary for the existence of a global standard form. We denote \begin{equation*}\tilde{A}_{i,j}= A_1\cdots A_{i-1} A_{i,1}\dots A_{i,j}\text{ and }\tilde{K}_i=A_1\cdots A_s K_1 \cdots K_i.\end{equation*} As the fields $A_i$ $(i=1,\ldots,s)$ do not share any ramified places, the Artin-Schreier generators $y_{i,j}$ of $\tilde{A}_{i,j}/\tilde{A}_{i,j-1}$ are automatically in global standard form. Similarly, by \eqref{divcondition}, $y_i$ is a Kummer generator in global standard form of $\tilde{K}_i/ \tilde{K}_{i-1}$. Therefore, all the conditions necessary to apply Theorem \ref{basis} for $\Omega_L$ are satisfied for the extension $L/K$. \\

\begin{remark} In this case, the identification of the appropriate tower in global standard form is natural. Due to questions related to class numbers (see for example \cite{Ros}), Hasse's method of obtaining global standard form is unclear to us in general, as it relies heavily upon the use of the principal ideal domain property of the field of rational functions. \end{remark}

We now give an example of a compositum of two Artin-Schreier extensions, which in contrast to Example \ref{easy} share ramified places, and yet where global standard form is also possible. In the following example, places may be either fully or partially ramified. We expect that other examples may also be produced. \end{example}

\begin{example}[\emph{Elementary abelian extensions}] \label{second} For this example, we suppose that $L/K$ is the compositum of two Artin-Schreier extensions $L_1/K$ and $L_2/K$. We also suppose that the generators $y_1$ and $y_2$ of $L_1$ and $L_2$, respectively, are in global standard form, with \begin{equation*}y_1^p-y_1= a_1 +m_1 z^n,\end{equation*} and that \begin{equation*}y_2^p-y_2 = m_2 z,\end{equation*} where $a_1, z \in K$, $m_1,m_2 \in k^*$, and $(n,p)=1$. As $k$ is perfect, there exists an element $\alpha \in k^*$ so that $m_1 m_2^{-n}= \alpha^p$. We additionally suppose that $a_1$ and $z$ do not share any places with negative valuation, i.e., if for some place $\mathcal{P}$ of $K$, $v_{\mathcal{P}}(a_1)<0$, then also $v_{\mathcal{P}}(z)\geq 0$, and vice versa. Therefore, at any ramified place $\mathcal{P}$ of $K$ in $L$, $v_{\mathcal{P}} (a_1+m_1z)$ is either equal to $ v_{\mathcal{P}} (a_1)$ or $v_{\mathcal{P}} (z)$, and all of the ramified places of $K$ in $L_1$ such that $v_{\mathcal{P}} (a_1 +m_1 z)= v_{\mathcal{P}} (a_1)$ are unramified in $L_2$. For example, the simple equations \begin{equation*}y_1^p -y_1 = \frac{1}{x(x-1)} = \frac{1}{x-1} -\frac{1}{x} \text{   and   } y_2^p - y_2=\frac{1}{x}\end{equation*} are of this form.

We let $\tilde{y}_1 =y_1 - \alpha y_2^n $. As defined, $\tilde{y}_1$ is an Artin-Schreier generator of $L/L_2$ in global standard form. We have \begin{align*} \tilde{y}_1 ^p - \tilde{y}_1 &= y_1^p - y_1 -m_1m_2^{-n} (y_2 + m_2 z)^n + \alpha y_2^n \\& = y_1^p - y_1 -m_1m_2^{-n} \sum_{k=0}^n {n \choose k} (y_2 )^k(m_2 z)^{n-k} + \alpha y_2^n \\& = a_1-m_1m_2^{-n} \sum_{k=1}^n {n \choose k} (y_2)^k(m_2 z)^{n-k} + \alpha y_2^n \\& = a_1-m_1m_2^{-n} \sum_{k=1}^{n-1} {n \choose k} (y_2)^k(m_2 z)^{n-k} + (\alpha-m_1m_2^{-n} ) y_2^n. \end{align*} 
By the work of Wu and Scheidler \cite{WuSch}, we know that $L/L_2$ is ramified above any place $\mathcal{P}$ of $K$ such that $v_{\mathcal{P}} (a_1)<0$, and that $v_{\mathcal{P}} (z)<0$ unless $m_1=m_2$ and $n=1$. Clearly, at a place $\mathfrak{p}_2$ of $L_2$ unramified in $L$, we have $v_{\mathfrak{p}_2} (\tilde{y}_1)\geq 0$. At the ramified places $\mathfrak{p}_2$ of $L_2$ in $L$ above a place $\mathcal{P}$ of $K$ such that $v_{\mathcal{P}} (a_1)<0$, we have $v_{\mathfrak{p}_2} (\tilde{y}_1)=v_{\mathcal{P}} (a_i)$. It follows that $\tilde{y}_1$ is in standard form at $\mathfrak{p}_2$, as was $y_1$ at $\mathcal{P}$. At a ramified place $\mathfrak{p}_2$ of $L_2$ in $L$ above a place $\mathcal{P}$ of $K$ such that $v_{\mathcal{P}} (z)<0$, one has by the strict triangle inequality that when $\alpha \neq \alpha^p$,
\begin{align*} v_{\mathfrak{p}_2} (\tilde{y}_1)&=\min \{ v_{\mathfrak{p}_2} (a_1), v_{\mathfrak{p}_2} ( (y_2)^k(m_2 z)^{n-k} ), v_{\mathfrak{p}_2} ( y_2^n) \}\\
&= \min \{ v_{\mathfrak{p}_2} (a_1), v_{\mathcal{P}} (z)( k + p(n-k) ) \; (k \in \{ 1, \ldots , n\}), n v_{\mathcal{P}} (z) \}\\&= v_{\mathcal{P}} (z)( 1 + p(n-1) )<0 \end{align*}
and coprime to $p$. Here, we find the same valuation as in \cite[Corollary 3.10]{WuSch}. \end{example}

We conclude this section with an example of a $p$-elementary abelian extension of degree $p^n$, i.e., a Galois extension with Galois group equal to a product of $n$ copies of $\mathbb{Z}/ p \mathbb{Z}$, which is equivalent to a compositum of $n$ Artin-Schreier extensions. Suppose in particular that $L/K=k(x)$ is an elementary abelian extension of degree $p^n$. It has been proven by Garcia and Stichtenoth \cite{GarSti} that as soon as $k$ contains $F_{p^n}$, there exists a generator $y$ of $L/K$ such that $y^{p^n}-y= z\in K$. In order to obtain a Boseck basis via this generator, one would need to have the element $y$ in global standard form. By \cite[Lemma 1.2]{GarSti}, the elements $y_\mu$ ($\mu \in \mathbb{F}_{p^n}$) defined by $y_{\mu}^p-y_{\mu}=\mu z$ are precisely the Artin-Schreier generators of all of the subextensions of $L$ of degree $p$ over $K$. For any place $\mathcal{P}$ of $K$ ramified in $L$ and $\mu \neq \mu'$, we have for the respective places $\mathfrak{P}_\mu$ and $\mathfrak{P}_{\mu'}$ of $K(y_\mu)$ and $K(y_{\mu'})$ above $\mathcal{P}$ that \begin{equation}\label{valuations}v_{\mathfrak{P}_\mu}(y_{\mu}) = v_{\mathfrak{P}_{\mu'}}(y_{\mu'})= v_{\mathcal{P}}(z).\end{equation} If $z$ is in global standard form, then any places $\mathcal{P}$ of $K$ are either unramified or fully ramified \cite[Theorem 3.11]{WuSch}. In particular, it is impossible to associate a partially ramified place with standard form, even \emph{locally}. If we take the compositum of two Artin Schreier extensions of the form $y_1^p- y_1= z_1$ and $y_2^p- y_2= z_2$ with $z_1,z_2 \in K$ in global standard form and $v_{\mathcal{P}} (z_1) < v_{\mathcal{P}} (z_2)$ for some place $\mathcal{P}$ of $K$ ramified in $L$, then from the previous observation, $\mathcal{P}$ is fully ramified \cite{WuSch}. For a basis $\{\mu_1,\mu_2\}$ of $\mathbb{F}_{p^2}/\mathbb{F}_p$, we may find a generator $y$ of $K(y_1,y_2)/K$ with generating equation $y^{p^2}-y = z \in K$ by taking $y = \mu_1 y_1 + \mu_2 y_2$. However, the generator $y$ cannot be written in global standard form at $\mathcal{P}$ due to \eqref{valuations}.  

\begin{remark} It is unknown to us when a generator may be expressed globally in standard form in this case. To demonstrate, if a global standard form were possible, then by \cite{GarSti}, one would need to obtain generators $y_i$ of the Artin-Schreier extensions satisfying equations $y_i^p-y_i=z_i$ in global standard form, for all $p$-subextensions of $L/K$, so that for any $i\neq j$, there exists a $\mu_{i,j} \in \mathbb{F}^*_{p^n}$ so that $z_i= \mu_{i,j} z_j$. Particularly, the use of the generator $y$ satisfying $y^{p^n}-y=z$ seems to be restrictive for obtaining a basis or Riemann-Hurwitz formula in terms of the valuation of the generator. \end{remark} 
\subsection{Weak standard form}

In this section, we again assume that $L$ is a global standard function field (Definition \ref{Boseckff}). Given the difficulties associated with global standard form (see Examples \ref{easy} and \ref{second} for details), it is desirable to find a type of standard form which is somewhat weaker than global standard form for Boseck-type calculations. We therefore prove the existence of a weaker standard form than the global standard form: Precisely, we are able to find a generator for each $L_i/L_{i-1}$ such that, for this choice of generator, all of the ramified places of the tower are simultaneously in \emph{local} standard form. This is the only ingredient necessary for us to extend the proof of \cite[Lemma 8]{KaKo} from an algebraically closed constant field $k$ to any perfect constant field which contains the requisite roots of unity.

\begin{lemma}\label{ASnormal}
Let $K$ be any function field of characteristic $p > 0$ with perfect field of constants $k$, and let $L/K$ be an Artin-Schreier extension. Let $\{\mathfrak{p}_a\}_{a \in A}$ denote a finite set of places of $K$ unramified in $L$. \begin{enumerate}[(i)] \item There exists $\tilde{y}_0 \in L$ so that $L= K(\tilde{y}_0)$, the valuation of $\tilde{y}_0$ at each ramified place of $L/K$ is negative and coprime to $p$, and the valuation of $\tilde{y}_0$ at each place of $L$ above $\{\mathfrak{p}_a\}_{a \in A}$ is nonnegative. \item If an Artin-Schreier generator of $L/K$ is in standard form at each ramified place, then the valuation of the generator at those places is uniquely determined. Precisely, with $z^p - z = u$ in standard form for each ramified place $\mathcal{P}$ of $K$ in $L$, one has for each such $\mathcal{P}$ that \begin{equation*}v_{\mathfrak{P}}(z) = \max\{ v_{\mathcal{P}} ( u-(w^p -w)) | w \in K\},\end{equation*} where $\mathfrak{P}$ denotes the place of $L$ above $\mathcal{P}$. \end{enumerate}
\end{lemma}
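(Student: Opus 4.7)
The two parts should be handled rather differently. Part (i) is an existence statement that should follow from combining the known local standard form (Stichtenoth, Lemma 3.7.7) at each of finitely many places via weak approximation. Part (ii) is a direct uniqueness statement provable by a brief elementary valuation argument exploiting the coprimality of $v_{\mathcal{P}}(u)$ with $p$.

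\medskip

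\textbf{Proof plan for (i).} Start with any Artin-Schreier generator $y \in L$ of $L/K$, so that $y^p - y = c$ for some $c \in K$. By Stichtenoth's Lemma 3.7.7, for each ramified place $\mathcal{P}$ of $K$ in $L$ there exists $w_{\mathcal{P}} \in K$ such that $c - (w_{\mathcal{P}}^p - w_{\mathcal{P}})$ has valuation at $\mathcal{P}$ strictly negative and coprime to $p$. Similarly, for each $\mathfrak{p}_a$ in the finite set of unramified places we can find $w_{\mathfrak{p}_a} \in K$ so that $v_{\mathfrak{p}_a}(c - (w_{\mathfrak{p}_a}^p - w_{\mathfrak{p}_a})) \geq 0$. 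Invoke weak approximation at the finite set of places consisting of the ramified places together with $\{\mathfrak{p}_a\}_{a \in A}$ to produce a single $w \in K$ with $v_\mathcal{P}(w - w_\mathcal{P})$ (respectively $v_{\mathfrak{p}_a}(w - w_{\mathfrak{p}_a})$) larger than any preassigned constant. Define $\tilde{y}_0 := y - w$, so $\tilde{y}_0^p - \tilde{y}_0 = c - (w^p - w)$, and $L = K(\tilde{y}_0)$.

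The only subtle point is that the approximation must be sharp enough that $w^p - w$ is close to $w_\mathcal{P}^p - w_\mathcal{P}$ (and similarly at $\mathfrak{p}_a$) at the precision needed. This is immediate from Frobenius in characteristic $p$: we have $(w - w_\mathcal{P})^p = w^p - w_\mathcal{P}^p$, so if $v_\mathcal{P}(w - w_\mathcal{P}) \geq N$ then $v_\mathcal{P}((w^p-w)-(w_\mathcal{P}^p - w_\mathcal{P})) \geq N$. Taking $N$ larger than the relevant local standard form valuations, the strict triangle inequality forces the valuation of $c - (w^p-w)$ at $\mathcal{P}$ to equal that of $c - (w_\mathcal{P}^p - w_\mathcal{P})$ (and similarly at each $\mathfrak{p}_a$). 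The properties at ramified places then pass to $\tilde{y}_0$ via the standard observation that $v_{\mathfrak{P}}(\tilde{y}_0)$ is determined by $v_\mathcal{P}(\tilde{y}_0^p - \tilde{y}_0)$; at the $\mathfrak{p}_a$, nonnegativity of $v_{\mathfrak{p}_a}(\tilde{y}_0^p - \tilde{y}_0)$ forces $v_{\mathfrak{P}_a}(\tilde{y}_0) \geq 0$.

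\medskip

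\textbf{Proof plan for (ii).} Let $z^p - z = u$ be in standard form at the ramified place $\mathcal{P}$, so $v_\mathcal{P}(u) < 0$ and $\gcd(v_\mathcal{P}(u), p) = 1$. A short computation with the strict triangle inequality gives $v_\mathfrak{P}(z) = v_\mathcal{P}(u)$: since the ramification index is $p$, an assumption $v_\mathfrak{P}(z) \geq 0$ would force $v_\mathfrak{P}(u) \geq 0$, a contradiction, so $v_\mathfrak{P}(z^p - z) = p v_\mathfrak{P}(z) = v_\mathfrak{P}(u) = p v_\mathcal{P}(u)$. It remains to show that the maximum on the right is $v_\mathcal{P}(u)$, which is achieved trivially at $w = 0$. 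For the upper bound $v_\mathcal{P}(u - (w^p-w)) \leq v_\mathcal{P}(u)$ valid for every $w \in K$, split on the sign of $v_\mathcal{P}(w)$. If $v_\mathcal{P}(w) \geq 0$ then $v_\mathcal{P}(w^p - w) \geq 0 > v_\mathcal{P}(u)$, hence $v_\mathcal{P}(u - (w^p-w)) = v_\mathcal{P}(u)$. If $v_\mathcal{P}(w) < 0$ then $v_\mathcal{P}(w^p - w) = p v_\mathcal{P}(w)$, which is divisible by $p$ and hence not equal to $v_\mathcal{P}(u)$; in the subcase $p v_\mathcal{P}(w) < v_\mathcal{P}(u)$ the valuation drops strictly, while in the subcase $p v_\mathcal{P}(w) > v_\mathcal{P}(u)$ it remains equal to $v_\mathcal{P}(u)$.

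\medskip

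\textbf{Expected obstacle.} The conceptually nontrivial part is (i), specifically ensuring that weak approximation suffices over a perfect (possibly non-algebraically-closed, or finite) field of constants and that the required simultaneous local adjustments can be realised by a single $w \in K$ without spoiling the standard form at any of the finitely many distinguished places. The Frobenius identity in characteristic $p$ makes the propagation of precision from $w$ to $w^p - w$ essentially trivial, so once weak approximation is in hand the argument is mechanical. Part (ii) is a short algebraic exercise with no real obstacle.
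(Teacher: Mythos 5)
Your proposal is correct, but it is organized quite differently from the paper's proof, and the comparison is instructive. For part (i) the paper does not invoke the local standard form as a black box at each place and then glue; instead it proves a bespoke approximation claim (given $u,v$ with equal valuations on a subset $S$ of a finite set $T$ of places, one can find $w$ with $v_{\mathfrak{p}_t}(w)=0$ on all of $T$ and $v_{\mathfrak{p}_s}(u - w^p v) > v_{\mathfrak{p}_s}(u)$ on $S$) using perfectness of the residue fields to extract $p$th roots of $\overline{u/v}$ and the Chinese Remainder Theorem in a holomorphy ring $\mathcal{O}=\bigcap_{\mathfrak{p}\neq\mathfrak{p}^*}\mathcal{O}_{\mathfrak{p}}$. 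It then runs a \emph{global} iteration: at each step it collects all places where the valuation of the defining element is negative and divisible by $p$, builds one correction $\beta w$ (with $\beta$ from weak approximation, $w$ from the claim), replaces $y$ by $y+\beta w$, and repeats until the iteration terminates. Your version pushes that entire iteration into the cited local result (Stichtenoth's Lemma 3.7.7, which the paper itself references earlier and which does require the perfect residue fields you worry about), and then performs a \emph{single} weak-approximation gluing, with the identity $(w^p-w)-(w_{\mathcal{P}}^p-w_{\mathcal{P}})=(w-w_{\mathcal{P}})^p-(w-w_{\mathcal{P}})$ controlling the error. This is more modular and avoids the CRT/holomorphy-ring machinery entirely; what the paper's self-contained route buys is independence from the precise form of the cited local lemma and an argument that exhibits exactly where perfectness of $k$ enters. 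One small imprecision on your side: at the unramified places $\mathfrak{p}_a$ the perturbed valuation need not \emph{equal} the local one, only remain nonnegative, which is all that is required. For part (ii) the paper simply defers to Stichtenoth (Lemma 3.7.7 and Proposition 3.7.8); your case analysis on $v_{\mathcal{P}}(w)$, using that $p\,v_{\mathcal{P}}(w)$ can never equal the $p$-coprime value $v_{\mathcal{P}}(u)$, is a correct and fully self-contained substitute.
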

\begin{proof}
We first prove the following: Let $\{\mathfrak{p}_t\}_{t \in T}$ be a finite set of places of $L$ and $\{\mathfrak{p}_s\}_{s \in S} \subset \{\mathfrak{p}_t\}_{t \in T}$. If $u, v \in K\backslash \{ 0\}$, and $v_{\mathfrak{p}_s} (u) = v_{\mathfrak{p}_s} (v)$ for all $s \in S$, then there exists some $w \in L$ so that \begin{enumerate}[(1)]\item $v_{\mathfrak{p}_t} (w) = 0$ for all $t \in T$; and \item $v_{\mathfrak{p}_s} (u - w^{p} v )> v_{\mathfrak{p}_s} ( u)$, for all $s \in S$.\end{enumerate} For each place $\mathfrak{p}_t$, we let $\mathcal{O}_{\mathfrak{p}_t}$ denote the valuation ring at $\mathfrak{p}_t$ and and $\mathfrak{p}_t$ the corresponding maximal ideal. By assumption, the residue class $\overline{(u/v)} \neq 0$ in $\mathcal{O}_{\mathfrak{p}_s}/\mathfrak{p}_s$, for each $s \in S$. As $k$ is perfect, so too is $\mathcal{O}_{\mathfrak{p}_s}/\mathfrak{p}_s$ perfect, as a finite extension of $k$. This implies that for each $s \in S$, there exists some $w_s \in \mathcal{O}_{\mathfrak{p}_s}^*$ so that \begin{equation*}\overline{w}_s^p = \overline{(u/v)} \in \mathcal{O}_{\mathfrak{p}_s}/\mathfrak{p}_s.\end{equation*} Let $\mathfrak{p}^*$ be a place of $K$ distinct from $\{\mathfrak{p}_t\}_{t \in T}$, which exists by the infinitude of places of $K$. By \cite[Theorem 5.7.10]{Vil}, there exists an element $\alpha \in K$ so that \begin{enumerate}[(i)] \item $v_{\mathfrak{p}^*}(\alpha) < 0$, \item and $v_\mathfrak{p}(\alpha) \geq 0$, for all $\mathfrak{p} \neq \mathfrak{p}^*$. \end{enumerate} Let $\mathcal{O}$ denote the integral closure of $k[\alpha]$ in $K$. By \cite[Theorem 5.7.9]{Vil}, \begin{equation*}\mathcal{O} = \bigcap_{\mathfrak{p} \neq \mathfrak{p}^*} \mathcal{O}_\mathfrak{p}.\end{equation*} As $\mathcal{O}$ is a holomorphy ring \cite[Proposition 3.2.9]{Sti}, for each place $\mathfrak{p}$ of $K$ with $\mathfrak{p} \neq \mathfrak{p}^*$, \begin{equation*}\mathcal{O}/(\mathfrak{p} \cap \mathcal{O}) \cong \mathcal{O}_\mathfrak{p}/\mathfrak{p}.\end{equation*} In particular, this holds for $\mathfrak{p} = \mathfrak{p}_t$, for any $t \in T$. We now select for any $t \in T \backslash S$ a unit $a_t \in \mathcal{O}_{\mathfrak{p}_t}^*$. By the previous arguments and the Chinese Remainder Theorem \cite[Theorem 1.3.6]{Neu}, we have \begin{equation}\label{CRT} \mathcal{O}/ \prod_{t \in T} (\mathfrak{p}_t \cap \mathcal{O}) \cong \bigoplus_{t \in T} \mathcal{O}/(\mathfrak{p}_t \cap \mathcal{O}) \cong \bigoplus_{t \in T} \mathcal{O}_{\mathfrak{p}_t}/{\mathfrak{p}_t}.\end{equation} Via the isomorphism \eqref{CRT}, we choose $w \in \mathcal{O}$ so that \begin{enumerate}\item $\overline{w} = \overline{w}_s$ for all $s \in S$, and \item $\overline{w} = \overline{a}_t$ for all $t \in T \backslash S$. \end{enumerate} First, we observe that this implies that the element $w$ is a unit in $\mathcal{O}_{\mathfrak{p}_t}$, for all $t \in T$, so that the condition (1) is automatically satisfied. Second, we have for all $s \in S$ that \begin{equation*}u/v = w_s^p = w^p \mod \mathfrak{p}_s\end{equation*} from which it follows for all $s \in S$ that $v_{\mathfrak{p}_s}(u/v - w^p) > 0$. Thus condition (2) is also satisfied.

As the extension $L/K$ is Artin-Schreier, it has a generator $y$ such that $y^p - y = r \in K$. We let $\{\mathfrak{p}_s\}_{s \in S}$ denote the union of the set of places of $K$ which ramify in $L$ so that $p \mid v_{\mathfrak{p}_s}(r)$ with the set of all places of $\{\mathfrak{p}_a\}_{a \in A}$ which satisfy $v_{\mathfrak{p}_a}(r) < 0$. Also, we let $\{\mathfrak{p}_t\}_{t \in T}$ denote the union of the set of all places of $K$ which ramify in $L/K$ with $\{\mathfrak{p}_a\}_{a \in A}$. For any of the unramified places $\mathfrak{p}_a$ and any place $\mathfrak{P}$ of $L$ above $\mathfrak{p}_a$, we have if $v_{\mathfrak{p}_a}(r) < 0$, \begin{equation*}p v_{\mathfrak{P}}(y) = v_{\mathfrak{P}}(y^p - y) = v_\mathfrak{P}(r) = v_{\mathfrak{p}_a}(r),\end{equation*} so that $p \mid v_{\mathfrak{p}_a}(r)$ whenever $v_{\mathfrak{p}_a}(r) < 0$. Via weak approximation \cite[Theorem 2.5.3]{Vil}, we may find an element $\beta \in K$ so that $v_{\mathfrak{p}_s}(\beta) = v_{\mathfrak{p}_s}(r)/p$, for all $s\in S$, and so that $v_{\mathfrak{p}_s}(\beta) = 0$, for all $t \in T \backslash S$. In particular, we have for all $s \in S$ that $v_{\mathfrak{p}_s}(\beta^p) = v_{\mathfrak{p}_s}(r)$, for all $s \in S$. By the first part of this proof, we find an element $w \in K$ so that $v_{\mathfrak{p}_t}(w) = 0$ for all $t \in T$, and so that \begin{equation*}v_{\mathfrak{p}_s}(r - w^p \beta^p ) > v_{\mathfrak{p}_s}(\beta^p),\end{equation*} for all $s \in S$. Thus $v_{\mathfrak{p}_s}(\beta w)= v_{\mathfrak{p}_s}(r)/p$ for all $s \in S$ and, for all $t \in T \backslash S$, we have $v_{\mathfrak{p}_t}(\beta w) = 0$. In particular, we find for all $s \in S$ that \begin{equation*}v_{\mathfrak{p}_s}(r - ((\beta w)^p - (\beta w))) \geq \min\{v_{\mathfrak{p}_s}(r - (\beta w)^p), v_{\mathfrak{p}_s}(\beta w)\} > v_{\mathfrak{p}_s}(r),\end{equation*} and for all $t \in T \backslash S$, we find that $v_{\mathfrak{p}_t}(r - ((\beta w)^p - (\beta w))) = v_{\mathfrak{p}_t}(r)$, as $v_{\mathfrak{p}_t}(r) < 0$. Choosing the element $y' = y + tw$ yields a new Artin-Schreier generation of $L/K$, where the valuations are strictly greater than those for the original $y$ at all places of $\{\mathfrak{p}_s\}_{s \in S}$, i.e., those places of $\{\mathfrak{p}_t\}_{t \in T}$ for which the valuation of $y$ is negative and divisible by $p$. Applying this replacement algorithm successively eventually terminates, which yields an Artin-Schreier generator $\tilde{y}_0^p - \tilde{y}_0 = \tilde{r}_0$ for which $v_\mathfrak{p}(\tilde{y}_0) < 0$ and coprime with $p$ for each place $\mathfrak{p}$ of $K$ ramified in $L$, and for which $v_{\mathfrak{p}_a}(\tilde{y}_0) \geq 0$ for all $a \in A$.

Part (ii) follows as in the proof of \cite[Lemma 3.7.7, Proposition 3.7.8]{Sti}. \end{proof}

As a corollary, we find that the valuations at the unramified places we choose may be made equal to zero in weak standard form. (This is not necessary for the proof of Proposition \ref{indecomposable}, which gives the decomposition of $\Omega_L$ in terms of indecomposable $k[G]$-modules.) This form was mentioned by Valentini and Madan in \cite{VaMa}, who gave it when the field of constants is algebraically closed; we include the analogous result for when $k$ is only assumed to be perfect.

\begin{corollary}\label{ASzeronormal}
Let $K$ be any function field of characteristic $p > 0$ with perfect field of constants $k\neq \mathbb{F}_p$, and let $L/K$ be an Artin-Schreier extension. Let $\{\mathfrak{p}_a\}_{a \in A}$ denote a finite set of places of $K$ unramified in $L$. There exists $\tilde{y} \in L$ so that $L= K(\tilde{y})$, the valuation of $\tilde{y}$ at each ramified place of $L/K$ is negative and coprime to $p$, and the valuation of $\tilde{y}$ at each place of $L$ above $\{\mathfrak{p}_a\}_{a \in A}$ is equal to zero. \end{corollary}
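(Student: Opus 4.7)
The plan is to start from the generator $\tilde{y}_0$ produced by Lemma~\ref{ASnormal} and replace it by $\tilde{y} = \tilde{y}_0 + w$ for a carefully chosen $w \in K$, so that at each place of $L$ above some $\mathfrak{p}_a$ the valuation drops from being merely nonnegative to being exactly zero, while leaving the valuations at the ramified places unchanged. Because $w \in K$, one automatically has $\tilde{y}^p - \tilde{y} = (\tilde{y}_0^p - \tilde{y}_0) + (w^p - w) \in K$ and $\tilde{y} \notin K$, so $\tilde{y}$ is again an Artin-Schreier generator of $L/K$.

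To preserve the standard form at ramified places, I will require $v_{\mathcal{P}}(w) \geq 0$ at every place $\mathcal{P}$ of $K$ ramified in $L$; then $v_\mathfrak{P}(w) = p\, v_\mathcal{P}(w) \geq 0 > v_\mathfrak{P}(\tilde{y}_0)$, forcing $v_\mathfrak{P}(\tilde{y}) = v_\mathfrak{P}(\tilde{y}_0)$ to remain negative and coprime to $p$. At each $\mathfrak{p}_a$ I will impose $v_{\mathfrak{p}_a}(w) = 0$; combined with $v_\mathfrak{P}(\tilde{y}_0) \geq 0$ from Lemma~\ref{ASnormal}, this yields $v_\mathfrak{P}(\tilde{y}) \geq 0$ at each $\mathfrak{P}$ above $\mathfrak{p}_a$, with equality precisely when the residue of $\tilde{y}_0 + w$ is nonzero in $\mathcal{O}_\mathfrak{P}/\mathfrak{P}$.

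Everything then reduces to choosing, for each $a \in A$, a good residue $\alpha_a \in \mathcal{O}_{\mathfrak{p}_a}/\mathfrak{p}_a$ for $\overline{w}$: nonzero and distinct from each $-\overline{\tilde{y}_0}$ (taken at a place $\mathfrak{P}$ above $\mathfrak{p}_a$) whenever the latter lies in the subfield $\mathcal{O}_{\mathfrak{p}_a}/\mathfrak{p}_a \subseteq \mathcal{O}_\mathfrak{P}/\mathfrak{P}$. Since $L/K$ has degree $p$ and $\mathfrak{p}_a$ is unramified, there are at most $p$ places above $\mathfrak{p}_a$, so the set of forbidden residues has cardinality at most $p + 1$. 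The hypothesis $k \neq \mathbb{F}_p$ gives either $k$ infinite or $|k| \geq p^2$; in both cases the residue field $\mathcal{O}_{\mathfrak{p}_a}/\mathfrak{p}_a$, which contains $k$, has strictly more than $p + 1$ elements, so a valid $\alpha_a$ exists. Weak approximation (\cite[Theorem 2.5.3]{Vil}) then yields a single $w \in K$ satisfying all the finitely many chosen conditions (nonnegative valuation at ramified places, unit at each $\mathfrak{p}_a$ with prescribed residue $\alpha_a$) simultaneously, and $\tilde{y} = \tilde{y}_0 + w$ is the required generator.

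The main obstacle I expect is the counting above and the verification that the hypothesis $k \neq \mathbb{F}_p$ really leaves enough room. It appears essentially sharp: if $k = \mathbb{F}_p$ were allowed, a degree-one place $\mathfrak{p}_a$ could have residue field of exactly $p$ elements, in which case the $p + 1$ forbidden residues would exhaust all available choices and the approximation step would fail.
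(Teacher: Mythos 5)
Your proof is correct, and while it shares the paper's broad strategy---start from the generator $\tilde{y}_0$ of Lemma \ref{ASnormal} and perturb it by an element of $K$ produced by approximation---the mechanism is genuinely different, most notably in how the hypothesis $k \neq \mathbb{F}_p$ is used. The paper works downstairs with the right-hand side $\tilde{r}_0$: it partitions the unramified places into three classes according to whether $v_{\mathfrak{p}_a}(\tilde{r}_0) > 0$, or $v_{\mathfrak{p}_a}(\tilde{r}_0) = 0$ with residue in the image of the Artin-Schreier map of the residue field, or neither; it then subtracts $\alpha^p - \alpha$ (with $\alpha$ chosen by the Chinese Remainder Theorem in a holomorphy ring) and finally the nonzero constant $\gamma^p - \gamma$ for some $\gamma \in k \setminus \mathbb{F}_p$, so as to force $v_{\mathfrak{p}_a}(\tilde{r}) = 0$ exactly, which in turn forces $v_{\mathfrak{P}}(\tilde{y}) = 0$. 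You instead work upstairs with $\tilde{y}_0$ itself, prescribing the residue of $w$ at each $\mathfrak{p}_a$ to avoid the at most $p+1$ bad values ($0$ together with $-\overline{\tilde{y}_0}$ at each of the at most $p$ places above $\mathfrak{p}_a$, when these lie in the subfield $\mathcal{O}_{\mathfrak{p}_a}/\mathfrak{p}_a$); here $k \neq \mathbb{F}_p$ enters as the counting bound $|\mathcal{O}_{\mathfrak{p}_a}/\mathfrak{p}_a| \geq p^2 > p+1$ rather than as the existence of a constant outside the kernel of $T \mapsto T^p - T$. Your route avoids the three-way case analysis and is somewhat more direct, at the price of reasoning in the residue fields of $L$ rather than of $K$; the paper's route gives the marginally stronger normalization that the defining equation itself has valuation zero at each $\mathfrak{p}_a$, which the statement does not require. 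Your closing observation on the sharpness of $k \neq \mathbb{F}_p$ matches the role this hypothesis plays in the paper's argument.
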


\begin{proof} We let $\{\mathfrak{p}_t\}_{t \in T}$ denote the union of the places of $K$ which ramify in $L$ with the set of unramified places $\{\mathfrak{p}_a\}_{a \in A}$. We assume the notation of Lemma \ref{ASnormal}, and with that notation, we let $S = A$. Furthermore, we also let $\tilde{r}_0$ and $\tilde{y}_0$ be as in Lemma \ref{ASnormal}.

The set $S$ may be partitioned into three subsets $S = S_1 \cup S_2 \cup S_3$, which are defined in the following way. The set $S_1$ denotes those places of $\{\mathfrak{p}_s\}_{s \in S}$ for which $v_{\mathfrak{p}_s}(\tilde{r}_0) > 0$. The set $S_2$ denotes those places of $\{\mathfrak{p}_s\}_{s \in S}$ for which $v_{\mathfrak{p}_s}(\tilde{r}_0) = 0$ and $\tilde{r}_0$ lies in the image of the Artin-Schreier map $x \rightarrow x^p -x$ in the residue field of $K$ at $\mathfrak{p}_s$. The set $S_3$ denotes the places of $S$ not belonging to $S_1$ or $S_2$. For each $s \in S_2$, let $x_s \in \mathcal{O}_{\mathfrak{p}_s}$ be an element so that \begin{equation*}\tilde{r}_0 \equiv \overline{x}_s^p - \overline{x}_s \mod \mathfrak{p}_s,\end{equation*} which exists for each such $s$ by definition of $S_2$. We note that the element $x_s$ is necessarily a unit. By the Chinese remainder theorem \cite[Theorem 1.3.6]{Neu}, let $\alpha$ be an element of $\mathcal{O}$ (with $\mathcal{O}$ defined as before) so that \begin{enumerate} \item $\overline{\alpha} \equiv 0 \mod \mathfrak{p}_s$, for all $s \in S_1$; \item $\overline{\alpha} \equiv \overline{x}_s \mod \mathfrak{p}_s$, for all $s \in S_2$; and \item $\overline{\alpha} \not\equiv 0 \mod \mathfrak{p}_s$, for all $s \in S_3$. \end{enumerate} As $\alpha \in \mathcal{O}$, the element $\tilde{r}_0 - (\alpha^p - \alpha)$ retains its negative and coprime to $p$ valuation at the places of $K$ which ramify in $L$. For all $s \in S_1$, \begin{equation*}v_{\mathfrak{p}_s}(\tilde{r}_0 - (\alpha^p - \alpha)) \geq \min\{v_{\mathfrak{p}_s}(\tilde{r}_0),v_{\mathfrak{p}_s}(\alpha^p),v_{\mathfrak{p}_s}(\alpha)\} > 0.\end{equation*} For all $s \in S_2$, \begin{equation*}\tilde{r}_0 - (\alpha^p - \alpha) \equiv \tilde{r}_0 - (x_s^p - x_s) \equiv 0 \mod \mathfrak{p}_s,\end{equation*} as $\overline{\alpha}^p - \overline{\alpha} \equiv \overline{x}_s^p - \overline{x}_s \text{ mod } \mathfrak{p}_s$, so that $v_{\mathfrak{p}_s}(\tilde{r}_0 - (\alpha^p - \alpha)) > 0$. For all $s \in S_3$, \begin{equation*}v_{\mathfrak{p}_s}(\tilde{r}_0 - (\alpha^p - \alpha)) = 0,\end{equation*} as $\tilde{r}_0$ does not lie in the image of the Artin-Schreier map of the residue field at $\mathfrak{p}_s$. Choosing $\gamma \in k \backslash \mathbb{F}_p$ (as $k \neq \mathbb{F}_p$) now yields that the element $\tilde{r}:= \tilde{r}_0 - (\alpha^p - \alpha) - (\gamma^p - \gamma)$ has negative and coprime to $p$ valuation at the places of $K$ which ramify in $L$. As $\gamma^p - \gamma \neq 0$ in the residue field at $\mathfrak{p}$ for any place $\mathfrak{p}$ of $K$, it follows for all $s \in S_1$ that \begin{equation*}v_{\mathfrak{p}_s}(\tilde{r}) = \min\{v_{\mathfrak{p}_s}(\tilde{r}_0 - (\alpha^p - \alpha)),v_{\mathfrak{p}_s}(\gamma^p - \gamma)\} = 0.\end{equation*} By the same reasoning, the same holds for all $s \in S_2$. Finally, for all $s \in S_3$, as $\tilde{r}_0$ does not lie in the image of the Artin-Schreier map in the residue field, it follows that \begin{equation*}\tilde{r}_0 \not\equiv (\alpha+\gamma)^p - (\alpha+\gamma) = (\alpha^p - \alpha) + (\gamma^p - \gamma) \mod \mathfrak{p}_s,\end{equation*} and hence for such $s$ that \begin{align*} v_{\mathfrak{p}_s}(\tilde{r}) &= v_{\mathfrak{p}_s}(\tilde{r}_0 - (\alpha^p - \alpha) - (\gamma^p - \gamma)) \\& = v_{\mathfrak{p}_s}(\tilde{r}_0 - ((\alpha+\gamma)^p - (\alpha+\gamma))) \\& = 0.\end{align*} Therefore, the element $\tilde{r}$, and thus the associated Artin-Schreier generator \begin{equation*}\tilde{y} = \tilde{y}_0 + (\alpha + \gamma),\end{equation*} are as desired.
\end{proof}

We also state and give a proof of the analogue of weak standard form for Kummer extensions.

\begin{lemma} \label{Knormal} Let $L/K$ be a Kummer extension of function fields of degree $n$, with constant field of positive characteristic $p > 0$ which contains all $n$th roots of unity. Let $\{\mathfrak{p}_a\}_{a \in A}$ denote a finite set of places of $K$ which are unramified in $L$. There exists $\tilde{y} \in L$ so that $L= K(\tilde{y})$, the valuation of $\tilde{y}$ at all places above those of $\{\mathfrak{p}_a\}_{a \in A}$ is equal to zero, and the valuation of $\tilde{y}$ at all places of $K$ which ramify in $L$ lies in the set $\{1,\ldots,n-1\}$. \end{lemma}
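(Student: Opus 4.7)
The plan is to produce $\tilde y$ by multiplying a given Kummer generator by a well-chosen element of $K^{\ast}$. Starting from any $y\in L$ with $y^{n}=c\in K$ and $K(y)=L$, every candidate $\tilde y=\beta y$ with $\beta\in K^{\ast}$ still satisfies $K(\tilde y)=L$ and $\tilde y^{n}=\beta^{n}c\in K$, so the entire problem reduces to controlling the valuation of $\beta^{n}c$ at the finitely many places of $K$ that ramify in $L$ together with the given $\{\mathfrak{p}_{a}\}_{a\in A}$.

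First I would record the divisibility constraints intrinsic to $c$. For any $\mathfrak{P}$ above $\mathfrak{p}$ in $L/K$, integrality of $v_{\mathfrak{P}}(y)$ in the relation $n v_{\mathfrak{P}}(y)=e(\mathfrak{P}|\mathfrak{p}) v_{\mathfrak{p}}(c)$ forces $n/e(\mathfrak{P}|\mathfrak{p})$ to divide $v_{\mathfrak{p}}(c)$; hence $n\mid v_{\mathfrak{p}_{a}}(c)$ at each unramified $\mathfrak{p}_{a}$, while at each ramified $\mathfrak{p}$ the relation $n\nmid v_{\mathfrak{p}}(c)$ recalled in \S 3.2 holds for every Kummer representative. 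For each of these places I would define an integer $r_{\mathfrak{p}}$ by setting $r_{\mathfrak{p}_{a}}=v_{\mathfrak{p}_{a}}(c)/n$ at each $a\in A$, and at each ramified $\mathfrak{p}$ taking the unique $r_{\mathfrak{p}}$ such that $v_{\mathfrak{p}}(c)-n r_{\mathfrak{p}}\in\{1,\ldots,n-1\}$; this is possible precisely because $n\nmid v_{\mathfrak{p}}(c)$ there.

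By weak approximation on $K$ (Theorem 2.5.3 of \cite{Vil}), I would then choose $\beta\in K^{\ast}$ with $v_{\mathfrak{p}}(\beta)=-r_{\mathfrak{p}}$ at each of these finitely many places, set $\tilde y=\beta y$ and $\tilde c=\beta^{n}c$, and compute that $v_{\mathfrak{p}}(\tilde c)=0$ at every $\mathfrak{p}_{a}$ and $v_{\mathfrak{p}}(\tilde c)\in\{1,\ldots,n-1\}$ at every ramified $\mathfrak{p}$. Translating back to $L$ via
$$v_{\mathfrak{P}}(\tilde y)=\frac{e(\mathfrak{P}|\mathfrak{p})}{n} v_{\mathfrak{p}}(\tilde c)=\frac{v_{\mathfrak{p}}(\tilde c)}{\gcd(n,v_{\mathfrak{p}}(\tilde c))}$$
shows that $v_{\mathfrak{P}}(\tilde y)=0$ above every $\mathfrak{p}_{a}$ and $v_{\mathfrak{P}}(\tilde y)\in\{1,\ldots,n-1\}$ above every ramified $\mathfrak{p}$, as required. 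The main (and only) obstacle is bookkeeping rather than substance: unlike the Artin--Schreier case of Lemma \ref{ASnormal}, there is no Frobenius-image compatibility to enforce across residue fields, so a single application of weak approximation suffices, and the hypothesis that $k$ contains the $n$-th roots of unity ensures that $\tilde y$ remains a degree-$n$ Kummer generator of $L/K$ without further adjustment.
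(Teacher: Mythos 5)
Your proposal is correct and follows essentially the same route as the paper's proof: both write $v_{\mathfrak{p}}(c)=l_{\mathfrak{p}}+nq_{\mathfrak{p}}$ (with $l_{\mathfrak{p}}\in\{1,\ldots,n-1\}$ at ramified places and $l_{\mathfrak{p}}=0$ at the chosen unramified places), use a single application of weak approximation to find a multiplier with valuation $-q_{\mathfrak{p}}$ at those finitely many places, and set $\tilde{y}=\beta y$. The extra bookkeeping you include (deriving the divisibility constraints on $v_{\mathfrak{p}}(c)$ and translating valuations back up to $L$) is consistent with the paper, which simply cites Proposition 3.7.3 of Stichtenoth for those facts.
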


\begin{proof} As $k$ contains the $n$th roots of unity, there exists $y \in L$ so that $L = K(y)$ and $y^n = c \in K$. At each ramified place $\mathfrak{p}$ of $K$, we have \begin{equation*}v_{\mathfrak{p}}(c) = l_\mathfrak{p} + n q_\mathfrak{p}\end{equation*} for some integer $q_\mathfrak{p}$ and $l_\mathfrak{p} \in \{1,\ldots,n-1\}$, and for each $a \in A$, we have $v_{\mathfrak{p}_a}(c) = n q_{\mathfrak{p}_a}$ for some integer $q_{\mathfrak{p}_a}$ \cite[Proposition 3.7.3]{Sti}. By weak approximation, we choose an element $\alpha \in K$ so that \begin{enumerate} \item $v_{\mathfrak{p}}(\alpha) = -q_\mathfrak{p}$, for all places of $K$ which ramify in $L$; and \item $v_{\mathfrak{p}_a}(\alpha) = -q_{\mathfrak{p}_a}$, for all $a \in A$.  \end{enumerate} It follows that the element $\tilde{y} = \alpha y$, which satisfies $\tilde{y}^n = (\alpha y)^n = \alpha^n c$, is as desired. \end{proof}

We now state a special form of the strict triangle inequality for Artin-Schreier extensions.

\begin{lemma}\label{AS} Let $L/K$ be a cyclic, geometric extension of function fields of degree $p$, with constant field $k$ of characteristic $p > 0$. Let $\mathfrak{p}$ be a place of $K$. Suppose that the Artin-Schreier generator $y^p - y = r \in K$ of $L/K$ is in local standard form at $\mathfrak{p}$, i.e., that for all places $\mathfrak{P}$ of $L$ above $\mathfrak{p}$, $v_\mathfrak{P}(y) < 0$ and coprime to $p$ if $\mathfrak{p}$ is ramified in $L$, and $v_\mathfrak{P}(y) \geq 0$ if $\mathfrak{p}$ is unramified in $L$. For $a \in L$, let \begin{equation*}a = b_0 + b_1 y + \cdots + b_{p-1} y^{p-1}, \;\;\;\; b_0,b_1,\ldots,b_{p-1} \in K.\end{equation*} There exists a place $\mathfrak{P}$ of $L$ above $\mathfrak{p}$ so that $v_{\mathfrak{P}}(a) = m$, where  \begin{displaymath}
   m = \left\{
     \begin{array}{lr}
       \min_{0 \leq j < p}\{v_\mathfrak{P}(b_j y^j)\} & \text{ if } \mathfrak{p} \text{ is ramified in $L$}\\
       \min_{0 \leq j < p}\{v_\mathfrak{p}(b_j)\} & \text{ if } \mathfrak{p}  \text{ is unramified in $L$}
     \end{array}
   \right.
\end{displaymath} \end{lemma}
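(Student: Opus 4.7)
The plan is to split on whether $\mathfrak{p}$ ramifies in $L$, and in each case reduce the claim to a form of the strict triangle inequality.

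Suppose first that $\mathfrak{p}$ is ramified in $L$. Since $[L:K]=p$ is prime and $e(\mathfrak{P}|\mathfrak{p})=p$, there is a unique place $\mathfrak{P}$ of $L$ above $\mathfrak{p}$, and for every $c\in K$ one has $v_\mathfrak{P}(c)=p\,v_\mathfrak{p}(c)$. By hypothesis $v_\mathfrak{P}(y)<0$ and $\gcd(v_\mathfrak{P}(y),p)=1$, so the values
\[
v_\mathfrak{P}(b_j y^j)=p\,v_\mathfrak{p}(b_j)+j\,v_\mathfrak{P}(y),\qquad j=0,1,\dots,p-1,
\]
(where only the $j$ with $b_j\neq 0$ are considered) are pairwise distinct, since the residues $j\,v_\mathfrak{P}(y)\bmod p$ run through all residue classes modulo $p$ as $j$ does. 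The strict triangle inequality then forces $v_\mathfrak{P}(a)=\min_{0\le j<p}v_\mathfrak{P}(b_j y^j)$, which is the claim in the ramified case.

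Now suppose $\mathfrak{p}$ is unramified in $L$. Then $v_\mathfrak{P}|_K=v_\mathfrak{p}$ for every $\mathfrak{P}\mid\mathfrak{p}$, and $v_\mathfrak{P}(y)\ge 0$. Set $m=\min_{0\le j<p}v_\mathfrak{p}(b_j)$, choose a uniformizer $\pi$ of $\mathcal{O}_\mathfrak{p}$, and write $b_j=\pi^m b_j'$ with $b_j'\in\mathcal{O}_\mathfrak{p}$ and at least one $b_j'$ a unit. Then $a/\pi^m=b_0'+b_1'y+\cdots+b_{p-1}'y^{p-1}\in\mathcal{O}_\mathfrak{P}$ for every $\mathfrak{P}\mid\mathfrak{p}$, so it suffices to exhibit one $\mathfrak{P}$ at which this element is a unit. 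I would do this by reducing modulo $\mathfrak{p}$ and using that the residue $\bar y$ at $\mathfrak P$ satisfies the Artin--Schreier polynomial over $\kappa(\mathfrak p)$.

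More precisely, the polynomial $\bar a'(T)=\bar b_0'+\bar b_1'T+\cdots+\bar b_{p-1}'T^{p-1}\in\kappa(\mathfrak p)[T]$ is a nonzero polynomial of degree less than $p$. Because $\mathfrak p$ is unramified in $L$ and the derivative of $T^p-T-r$ is a unit, Hensel's lemma identifies the places $\mathfrak{P}\mid\mathfrak{p}$ with the irreducible factors of $\bar r$'s Artin--Schreier polynomial $T^p-T-\bar r\in\kappa(\mathfrak p)[T]$, and the residue $\bar y$ at each $\mathfrak P$ is a root of the corresponding factor in $\kappa(\mathfrak P)$. If $\bar r$ lies in the image of the map $x\mapsto x^p-x$ on $\kappa(\mathfrak p)$, then $T^p-T-\bar r$ splits into $p$ distinct linear factors giving $p$ distinct residues in $\kappa(\mathfrak p)$; since $\bar a'$ has fewer than $p$ roots, one of these residues is not a root of $\bar a'$. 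Otherwise $T^p-T-\bar r$ is irreducible, and at the unique $\mathfrak P$ the residue $\bar y$ has minimal polynomial of degree $p$ over $\kappa(\mathfrak p)$, so the lower-degree polynomial $\bar a'$ cannot vanish on it. In either case we obtain a place $\mathfrak P\mid\mathfrak p$ with $v_\mathfrak{P}(a/\pi^m)=0$, whence $v_\mathfrak{P}(a)=m=\min_{0\le j<p}v_\mathfrak p(b_j)$, as required.

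The main obstacle I expect is the unramified case: the ramified case is a clean distinct-residues-mod-$p$ argument, whereas the unramified case genuinely needs the ``some $\mathfrak P$'' language and relies on the Hensel-type description of the residues of $y$ together with the degree bound on $\bar a'$.
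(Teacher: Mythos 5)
Your proof is correct. The paper does not actually write out an argument here --- it simply defers to Lemma 2 of Madan--Madden \cite{MadMad} --- and what you give is the standard proof behind that citation: in the ramified case the valuations $p\,v_{\mathfrak p}(b_j)+j\,v_{\mathfrak P}(y)$ lie in distinct residue classes modulo $p$ because $\gcd(v_{\mathfrak P}(y),p)=1$, forcing equality in the ultrametric inequality; in the unramified case the Artin--Schreier polynomial $T^p-T-\bar r$ (note $v_{\mathfrak p}(r)\ge 0$ follows from $v_{\mathfrak P}(y)\ge 0$, so the reduction makes sense) either splits into $p$ distinct linear factors or is irreducible, and in either case the nonzero polynomial $\bar a'$ of degree less than $p$ cannot vanish at the residue of $y$ at every place above $\mathfrak p$. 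Both halves are sound, including the use of the separability of the reduction (derivative $-1$) to identify places with irreducible factors, so your write-up supplies exactly the details the paper outsources.
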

\begin{proof} The proof is precisely as in \cite[Lemma 2]{MadMad}.
\end{proof}

Our next stated result (Lemma \ref{min}) is the natural extension of Lemma \ref{AS} to generalised Artin-Schreier extensions. One may compare this to \cite[Lemma 2]{VaMa}; we remark that Lemma \ref{min} differs slightly from this result, as the generators of unramified steps in the tower do not appear, so that we only need to require the weak standard form given by Lemma \ref{ASnormal}. This is done in order to employ a version of \cite[Theorem 1]{VaMa} over a constant field which is only assumed to be perfect.

\begin{lemma}\label{min} Let $L/K$ be a cyclic, geometric extension of function fields of degree $p^s$, with constant field $k$ of characteristic $p > 0$ and Artin-Schreier tower $L=L_s/L_{s-1}/\cdots/L_0 = K$. Let $\mathfrak{p}$ be a place of $K$, and let $\mathfrak{p}_{i-1}$ be a place of $L_{i-1}$ above $\mathfrak{p}$. Suppose for each $i =1,\ldots,s$ and each such place $\mathfrak{p}_{i-1}$ that the Artin-Schreier generator $y_i^p - y_i = r_i \in L_{i-1}$ of $L_i/L_{i-1}$ is in standard form at $\mathfrak{p}_{i-1}$ if $\mathfrak{p}_{i-1}$ is ramified in $L_i$, or that $v_\mathfrak{\mathfrak{P}}(y) \geq 0$ for all $\mathfrak{P}$ of $L_i$ above $\mathfrak{p}_{i-1}$ if $\mathfrak{p}_{i-1}$ is unramified in $L_i$. For $a \in L$, let \begin{equation*}a = \sum_{\mu_1,\ldots,\mu_s} a_{\mu_1,\ldots,\mu_s} y_1^{\mu_1} \cdots y_s^{\mu_s},\;\;\;\;a_{\mu_1,\ldots,\mu_s} \in K.\end{equation*} Then \begin{equation*}\min_{\mathfrak{P}|\mathfrak{p}} v_\mathfrak{P}(a) = \min_{\substack{\mathfrak{P}|\mathfrak{p} \\ \mu_i \\ \mathfrak{p}_i|\mathfrak{p}_{i-1} \text{ramifies}}} v_{\mathfrak{P}}\left(a_{\mu_1,\ldots,\mu_s} \prod_{\substack{i \\ \mathfrak{p}_i|\mathfrak{p}_{i-1} \text{ramifies}}} y_i^{\mu_i}\right),\end{equation*} where the minimum is taken over all places $\mathfrak{P}$ of $L$ over $\mathfrak{p}$ and $\mu_i \in \{0,\ldots,p-1\}$, for all $i$ where $\mathfrak{p}_i|\mathfrak{p}_{i-1}$ ramifies.
\end{lemma}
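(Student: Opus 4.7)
The plan is to induct on $n$. The base case $n=1$ follows directly from Lemma \ref{AS}: if $\mathfrak{p}$ ramifies in $L$, then $L/K$ is totally ramified at $\mathfrak{p}$ (being cyclic of prime degree), so a unique $\mathfrak{P}$ lies above $\mathfrak{p}$ and Lemma \ref{AS} coincides with the claim; if $\mathfrak{p}$ is unramified, the product on the right-hand side is empty, and one checks both inequalities by combining Lemma \ref{AS} with the triangle inequality, using $v_\mathfrak{P}(y_1)\geq 0$ (weak standard form) and $v_\mathfrak{P}(a_{\mu_1})=v_\mathfrak{p}(a_{\mu_1})$ for every $\mathfrak{P}|\mathfrak{p}$.

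For the inductive step, decompose $a=\sum_{\mu_n=0}^{p-1} A_{\mu_n} y_n^{\mu_n}$ with $A_{\mu_n}\in L_{n-1}$. Since $L_{n-1}/K$ is Galois, every place $\mathfrak{p}_{n-1}|\mathfrak{p}$ behaves identically in $L_n/L_{n-1}$, so the argument naturally splits into two cases. In the ramified case, total ramification in $L_n/L_{n-1}$ above each such $\mathfrak{p}_{n-1}$ yields a bijection $\mathfrak{P}\leftrightarrow\mathfrak{p}_{n-1}$, and Lemma \ref{AS} gives
\[
v_\mathfrak{P}(a)=\min_{\mu_n}v_\mathfrak{P}(A_{\mu_n}y_n^{\mu_n})=\min_{\mu_n}\bigl(p\,v_{\mathfrak{p}_{n-1}}(A_{\mu_n})+\mu_n v_\mathfrak{P}(y_n)\bigr).
\]
By Lemma \ref{ASnormal}(ii), $v_\mathfrak{P}(y_n)$ is intrinsically determined by the standard-form data at $\mathfrak{p}_{n-1}$, and since the ramification index and jump of the filtration are preserved under the Galois action on $\{\mathfrak{p}_{n-1}|\mathfrak{p}\}$, this value is a constant $V$ across all $\mathfrak{P}|\mathfrak{p}$. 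Taking $\min_{\mathfrak{P}|\mathfrak{p}}$ and applying the inductive hypothesis to each $A_{\mu_n}$ at $\mathfrak{p}$ then yields the desired formula, with the product on the right-hand side now including $y_n^{\mu_n}$.

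In the unramified case, Lemma \ref{AS} furnishes, for each $\mathfrak{p}_{n-1}|\mathfrak{p}$, a place $\mathfrak{P}|\mathfrak{p}_{n-1}$ with $v_\mathfrak{P}(a)=\min_{\mu_n}v_{\mathfrak{p}_{n-1}}(A_{\mu_n})$; any other $\mathfrak{P}'|\mathfrak{p}_{n-1}$ satisfies $v_{\mathfrak{P}'}(a)\geq \min_{\mu_n}v_{\mathfrak{p}_{n-1}}(A_{\mu_n})$ by the triangle inequality together with $v_{\mathfrak{P}'}(y_n)\geq 0$. Hence $\min_{\mathfrak{P}|\mathfrak{p}}v_\mathfrak{P}(a)=\min_{\mathfrak{p}_{n-1}|\mathfrak{p},\,\mu_n}v_{\mathfrak{p}_{n-1}}(A_{\mu_n})$; converting $v_{\mathfrak{p}_{n-1}}$ into $v_\mathfrak{P}$ via $e(\mathfrak{P}|\mathfrak{p}_{n-1})=1$ and invoking the inductive hypothesis on each $A_{\mu_n}$ finishes this case, with $y_n$ absent from the product since step $n$ does not ramify. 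The main technical obstacle is the bookkeeping in the ramified case, specifically verifying the constancy of $v_\mathfrak{P}(y_n)$ across all $\mathfrak{P}|\mathfrak{p}$ (not merely those above a single $\mathfrak{p}_{n-1}$), which rests on the uniqueness assertion of Lemma \ref{ASnormal}(ii) together with Galois-invariance of ramification data on conjugate places of $L_{n-1}$.
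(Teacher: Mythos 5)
Your proposal is correct and follows exactly the route the paper takes: the paper's entire proof of Lemma \ref{min} is the single sentence ``This follows by induction from Lemma \ref{AS}.'' Your write-up simply makes that induction explicit (including the genuinely necessary observation that $v_{\mathfrak{P}}(y_n)$ is constant over conjugate places, which is needed to combine the place-by-place output of Lemma \ref{AS} with the inductive hypothesis, which only controls the global minimum over $\mathfrak{p}_{n-1}\mid\mathfrak{p}$).
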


\begin{proof} This follows by Lemma \ref{AS} and a simple induction argument. \end{proof}
We conclude this section with an application of weak standard form to the cyclic case. Let $K$ be any function field and $k$ again a perfect field of characteristic $p>0$, and let $L/K$ be a cyclic Galois extension of degree $p^t m$ with $(m,p)=1$. Denote by $G= G_p\times G_m$ the cyclic Galois group with generator $\sigma$ of $L/K$, its unique cyclic $p$-Sylow subgroup $G_p$ with generator $\sigma_p= \sigma^{m}$, and \begin{equation*}G_m\cong \mathbb{Z} / m \mathbb{Z}\cong G/G_p,\end{equation*} with generator $\sigma_m= \bar{\sigma}$, where $\bar{\sigma}$ denotes the image of $\sigma$ in $G/G_p$. The extension $L/ L^{G_p}$ is a generalised Artin-Schreier extension, which we may write as a tower \begin{equation*}L=A_{s}/ A_{s-1} / \cdots / A_{0}= L^{G_p},\end{equation*} where $A_{i}/ A_{i-1}$ are Artin-Schreier extensions and $L^{G_p} / K$ is a Kummer extension. We denote by $\mathbb{P}_K$ the set of places of $K$ which ramify in $L$. For a place $\mathcal{P}$ of $K$, we denote by $\mathfrak{P}$ of $L$ above $\mathcal{P}$, $\mathfrak{p}_{A_i}= \mathfrak{P}\cap A_i$, and $\mathfrak{p}_{Ku}= \mathfrak{P}\cap L^{G_p}$. f

By Lemma \ref{ASnormal}, we may choose an Artin-Schreier generator $y_{A_i}$ of $A_{i}/ A_{i-1}$ such that $y_{A_i}^p - y_{A_i} = c_{A_i}$, where for any place of $A_{i-1}$ unramified in $A_i$ above a ramified place of $K$ in $L$, \begin{equation*}v_{\mathcal{P},i}:=v_{\mathfrak{p}_{A_{i-1}}} (c_{A_i}) \geq 0,\end{equation*} and such that for any place of $A_{i-1}$ ramified in $A_i$, $v_{\mathcal{P},i} <0$ and coprime with $p$. Up to a base change of the form $y_{A_i} \rightarrow l_{A_i} y_{A_i}$ with $l_{A_i}\in \mathbb{F}_p$, we may suppose without loss of generality that $\sigma_p^{p^{i-1}} ( y_{A_i}) = y_{A_i}+1$. By Lemma \ref{Knormal}, we may then choose a Kummer generator $y_{Ku}^m = c_{Ku}$ such that, for any place $\mathcal{P}$ of $K$ unramified in $L^{G_p}$ which ramifies in $L$, $v_{\mathcal{P}} (c_{Ku}) = 0$, and such that for any place of $K$ ramified in $L^{G_p}$, $v_{\mathcal{P}} (c_{Ku}) > 0$. We denote $v_{\mathcal{P}, Ku}:= v_{\mathfrak{p}_{Ku}} (y_{Ku})$. Recall that we may identify $G_m$ with the group generated by a primitive $m$th root of unity $\xi$, so that up to a choice of a different generator of $G$, we may suppose that $\sigma_m (y_{Ku}) = \xi y_{Ku}$. As $L/ L^{G_p}$ is abelian, we may decompose it as $L/ L^{p, unr} / L^{G_p}$, where $L^{p, unr} / L^{G_p}$ is unramified of degree $s_{unr} \leq s$, and such that for any $s_{unr}+1 \leq i \leq s$, there is at least one ramified place in the extension $A_i/A_{i-1}$ (take $L^{p, unr}$ to be the fixed field of the product of the inertia group at the ramified places). 

Let 
\begin{equation*}\mu = (\mu_{A_1}, \cdots, \mu_{A_s} , \mu_{Ku} ) \in  \left( \prod_{i=1}^s \{ 0, \cdots , p-1\}\right) \times \{ 0, \cdots , m-1\},\end{equation*} and let $\lambda_{\mathcal{P}}^{\mu}$ and $\rho_{\mathcal{P}}^{\mu}$ be defined as in Theorem \ref{basis}, i.e.,
 \begin{align*} e_{\mathcal{P}} &\lambda_{\mathcal{P}}^{\mu} +\rho_{\mathcal{P}}^{\mu}=\\& \left[\sum_{i=s_{unr}}^s e(\mathfrak{P}| \mathfrak{p}_{A_i}) ( (p-1- \mu_{A_i}) (- v_{P,i}) + (p-1) )\right]+ e(\mathfrak{P}| \mathfrak{p}_{Ku}) (\mu_{K_u}  v_{\mathcal{P},K_u} +(e(\mathfrak{p}_{Ku} |\mathcal{P})-1) ) ,\;\;\;\;\;\;\end{align*} with $0\leq \rho_{\mathcal{P}}^{\mu}\leq e_{\mathcal{P}}-1$. Also as in Theorem \ref{basis}, we let \begin{equation} \label{tmu} t^{\mu} = \sum_{\mathcal{P} \in \mathbb{P}_K} d_{\mathcal{P}} \left(\lambda_{\mathcal{P}}^{\mu} -  \sum_{i\in R_{o,\mathcal{P}}} \frac{ e ( \mathfrak{P} | \mathfrak{p}_{i})}{e_{\mathcal{P}}}  v_{\mathcal{P},i}\mu_i\right).\end{equation} This is the same invariant as in \cite[Definition 4]{KaKo} (see also Remark \ref{equivalence}). We note that $d_{\mathcal{P}}=1$ in the special case that $k$ is algebraically closed.
 
As Lemma \ref{Knormal}, Lemma \ref{min}, and the results of Tamagawa \cite{Tam} are valid over a perfect field, and the standard form of Lemma \ref{ASnormal} is enough to prove \cite[Theorem 1]{VaMa}, we find that \cite[Theorem 7]{KaKo} (by the same arguments) holds over any perfect field of characteristic $p > 0$ containing the requisite roots of unity. Precisely:
\begin{proposition}\label{indecomposable}
Suppose that an extension $L$ is a global standard function field (Definition \ref{Boseckff}) such that $L/K$ is cyclic. To each value of $\mu$, we associate the $p$-adic expansion \begin{equation*}\mu_p = \mu_{A_s} + p \mu_{A_{s-1}} + \cdots + \mu_{A_1} p^{s-1},\end{equation*} so that $\mu$ is associated with a unique $(\mu_p,\mu_{Ku})$. We define the $k[G]$-indecomposable module $\Delta_{\mu}$ (see also \cite[Proposition 2]{KaKo}) to be the $\mu_p$-dimensional $k$-vector space with basis $\{ v_1, \cdots , v_{\mu_p}\}$ and Galois action given by $\sigma (v_i) = \xi^{\mu_{Ku}} v_i + v_{i+1}$ for all $1 \leq i \leq \mu_p -1$ and $\sigma ( v_{\mu_p} ) = \xi^{\mu_{Ku}} v_{\mu_p}$. We obtain the decomposition in indecomposable $k[G]$-modules \begin{equation*}\Omega_L \cong\oplus_{\mu \in \Gamma} \Delta_{\mu}^{d_{\mu}},\end{equation*} where $d_{\mu}$ denotes the number of times that the module $\Delta_{\mu}$ appears in this decomposition of $\Omega_L$.  Furthermore, let $t^{\mu}$ be defined according to \eqref{tmu} (see also Theorem \ref{basis}), and let the integers $\delta_{\mu}$ be defined as  \begin{displaymath}
   \delta_{\mu} = \left\{
     \begin{array}{lr}
       1 & \text{if } t^{\mu}=0\\
       0 & \text{otherwise.}
     \end{array}
   \right.\end{displaymath} Then we find:
\begin{enumerate}
\item If $0 \leq \mu_p < p^s- p^{s_{unr}}$, 
\begin{equation*}d_{\mu}= t^{(\mu_p-1, \mu_{Ku})} - t^{\mu}+ \delta_{(\mu_p-1, \mu_{Ku})} - \delta_{\mu}.\end{equation*}
\item If $\mu_p = p^s- p^{s_{unr}}$, 
\begin{itemize}
\item for $\mu_{Ku}\neq 0$, \begin{equation*}d_{\mu}= t^{(\mu_p-1, \mu_{Ku})} - \frac{1}{p^{s_{unr}}} t^{\mu}+ \delta_{(\mu_p-1, \mu_{Ku})};\end{equation*}
\item for $\mu_{Ku}= 0$, \begin{equation*}d_{\mu}= t^{(\mu_p-1, \mu_{Ku})} -  \delta_{(\mu_p-1, \mu_{Ku})}-1.\end{equation*}
\end{itemize}
\item If $p^s- p^{s_{unr}}< \mu_p \leq p^s$, then 
\begin{enumerate}
\item for $s_{unr} \neq 0$, 
\begin{itemize}
\item whenever $\mu_{Ku}= 0$ and $\mu_p= p^s- p^{s_{unr}}+1$, \begin{equation*}d_{\mu}=1;\end{equation*}
\item whenever $\mu_p = p^s$, \begin{equation*}d_{\mu}= \frac{1}{p^{s_{unr}}} ( g_{L^{p,unr}} - 1 + t^{\mu});\end{equation*}
\item $d_{\mu}= 0$, otherwise.
\end{itemize}
\item for $s_{unr} = 0$ and $\mu_p= p^l$, 
\begin{equation*}d_{\mu}=  g_{K} - 1 + t^{\mu} + \delta_{\mu}.\end{equation*}
\end{enumerate}
\end{enumerate}
\end{proposition} 
\begin{proof} This follows easily by \cite[Theorem 7]{KaKo}, replacing the use of \cite[Theorem 1]{VaMa} with Lemma \ref{ASnormal} if $k$ is not algebraically closed. \end{proof}
\begin{remark} The invariants in the previous theorem are the same as those in \cite[Theorem 7]{KaKo} (see Remark \ref{equivalence}), as $d_{\mathcal{P}}=1$ whenever $k$ algebraically closed . \end{remark}

\section{The Galois action on $\Omega_L$ for abelian extensions} 

We now describe the Galois action on $\Omega_L$ when $L$ is a global standard function field (Definition \ref{Boseckff}) and $L/K$ is abelian. As $G=\text{Gal}(L/K)$ is abelian, we may write $L/K$ as a tower $L/ L^{G_p}/K$, where $L / L^{G_p}$ is the maximal subextension of prime power degree $p^\tau$ and $L^{G_p}/K$ is an extension of degree $m$ with $(m,p)=1$. We denote $\text{Gal}( L/ L^{G_p})= G_p$ and $\text{Gal}( L^{G_p} / K)= G_m$. Thus, $G \cong G_p\times G_m$, with \begin{equation*}G_p := \mathbb{Z} / p^{t_1} \mathbb{Z}\times \cdots \times \mathbb{Z} / p^{t_s} \mathbb{Z}\text{ and }G_m:=\mathbb{Z} / m_1 \mathbb{Z} \times \cdots \times \mathbb{Z} / m_r \mathbb{Z}.\end{equation*}
We write $|G_p|= p^\tau$ and $|G_m|=m$. We write $L/L^{G_p}$ as a tower \begin{equation*}L/L^{G_p}=K_r /\cdots / K_0=K,\end{equation*} where each $K_i/K_{i-1}$ ($i=1,\ldots,r$) is a Kummer extension with Galois group isomorphic to  $\mathbb{Z} / {m_i} \mathbb{Z}$, and we denote by $y_i$ a Kummer generator of $K_i/K_{i-1}$ with $y_{K_i}^{m_i} = c_{K_i} \in K_{i-1}$. Given a primitive $m_i$th root of unity $\xi_i$, we know that $\text{Gal}(K_i/K_{i-1})$ may be identified with the group generated by $\xi_i$, and that the action of $\xi_i$ on $y_{K_i}$ is given by $y_{K_i}\rightarrow \xi_i y_{K_i}$. By Galois theory, the $p$-extension $L/L^{G_p}$ may be expressed as a tower 
\begin{equation*}L=A_s/ \cdots / A_0=L^{G_p},\end{equation*} 
where each $A_{i}/ A_{i-1}$ ($i=1,\ldots,s$) is a generalised Artin-Schreier extension with a unique decomposition
\begin{equation*}A_{i}=A_{i,t_i}/ \cdots / A_{i,0}=A_{i-1},\end{equation*} where each $A_{i,j}/ A_{i,j-1}$ ($j = 0, \cdots , t_{i}$) is an Artin-Schreier extension with an Artin-Schreier generator $y_{A_{i,j}}$ such that $y_{A_{i,j}}^p-y_{A_{i,j}}=c_{A_{i,j}}$. We denote by $\sigma_{A_i}$ a generator of $\text{Gal}(A_i/A_{i-1})$; by Artin-Schreier theory, we may assume that $\sigma_{A_i}^{p^{i-1}} (y_{A_{i,j}}) = y_{A_{i,j}}+1$. For each $i = 1 , \cdots, s$ and $\mu_{A_i} \in \{ 0, \cdots, p^{t_i}-1\}$, we denote the $p$-adic expansion of  $\mu_{A_i}$ by \begin{equation}\label{padic} \mu_{A_i}= \mu_{A_{i,1}} + p \mu_{A_{i,2}} + \cdots + p^{t_i-1}\mu_{A_{i,t_i}}.\end{equation} We also let \begin{equation} \label{mudef} \mu = (\mu_{A_1}, \cdots , \mu_{A_s} , \mu_{K_1}, \cdots , \mu_{K_r}) \text{ and } z_{\mu} =y_{A_1}^{\mu_{A_1}}\cdots y_{A_s}^{\mu_{A_s}}y_{K_1}^{\mu_{K_1}}\cdots y_{K_r}^{\mu_{K_r}},\end{equation} where $\mu_{K_j} \in  \{  0, \cdots, m_{j}-1\}$ for each $j = 1 , \ldots, r $ and  \begin{equation*}y_{A_i}^{\mu_{A_i}}=y_{A_{i,t_j}}^{\mu_{A_{i,t_{j}}}}\cdots y_{A_{i,1}}^{\mu_{A_{i,1}}}.\end{equation*} This notation will be employed in Lemmas \ref{first}-\ref{previouslemma} and Theorem \ref{modulestructure}.

We recall that the Galois action on the tower $L/K$ may be expressed as in the following two lemmas; the proofs are just the same as in \cite{RzViMa2}, to which we refer the reader for details.

\begin{lemma}[Proposition 1, \cite{RzViMa2}]\label{first} For any $i = 1, \cdots, s$,  $j =  1, \cdots , t_i $ and $h = 1 , \cdots , p-1$, we have \begin{equation*}\sigma_i^{h}(y_{A_{i,j}} )= y_{A_{i,j}}+ P_{A_{i,j}, h}(y_{A_{i,1}}, \cdots , y_{A_{i,j-1}}),\end{equation*} where $P_{A_{i,j}, h}  (T_1, \cdots , T_{j-1}) \in \mathbb{Z} [ T_1, \cdots , T_{j-1}]$. 
\end{lemma} 

\begin{lemma}[Proposition 4, \cite{RzViMa2}]\label{null}
For any \begin{equation*}\mu = (\mu_{A_1}, \cdots , \mu_{A_s} , \mu_{K_1}, \cdots , \mu_{K_r}) \in \prod_{i=1}^s \{0,\cdots , p^{t_i}-1\} \times \prod_{j=1}^r \{0,\cdots , m_j-1\}\end{equation*} and $i =1, \cdots , s$, we have \begin{equation*}(\sigma_i -1)^{\mu_{A_i}} (z^{\mu} )= \left(\mu_{A_i}!\right)  y_{A_1}^{\mu_{A_1}} \cdots y_{A_{i-1}}^{\mu_{A_{i-1}}}  y_{A_{i+1}}^{\mu_{A_{i+1}}}\cdots  y_{A_s}^{\mu_{A_s}}y_{K_1}^{\mu_{K_1}}\cdots y_{K_r}^{\mu_{K_r}}.\end{equation*} Furthermore, \begin{equation*} (\sigma_i -1)^{\mu_{A_i}+1} (z^{\mu} )= 0.\end{equation*}
\end{lemma}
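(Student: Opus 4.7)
The plan is to reduce the claim to the single $i$-th Artin--Schreier factor $y_{A_i}^{\mu_{A_i}}$ and then induct on the length $u_i$ of the sub-tower, exploiting the characteristic-$p$ identity $(\sigma_i-1)^p=\sigma_i^p-1$ throughout. First, using the direct product structure $G_p\simeq\prod_{j}\mathbb{Z}/p^{t_j}\mathbb{Z}$, one may arrange the generators so that $\sigma_i$ fixes every $y_{A_j}$ with $j\neq i$ (by realising $A_j$ inside the subfield fixed by the complementary direct factors) and every Kummer generator $y_{K_j}$ (each of which lies in $L^{G_p}$). As $\sigma_i$ is a ring automorphism, $(\sigma_i-1)(ab)=b(\sigma_i-1)(a)$ whenever $\sigma_i(b)=b$; iterating, $(\sigma_i-1)^m$ commutes with multiplication by every $\sigma_i$-fixed element. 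Pulling such factors out of $(\sigma_i-1)^{\mu_{A_i}}(z^{\mu})$ reduces the lemma to the two local identities
$$\delta^{\mu_{A_i}}\bigl(y_{A_{i,u_i}}^{\mu_{A_{i,u_i}}}\cdots y_{A_{i,1}}^{\mu_{A_{i,1}}}\bigr)=\mu_{A_i}!,\qquad \delta^{\mu_{A_i}+1}\bigl(y_{A_{i,u_i}}^{\mu_{A_{i,u_i}}}\cdots y_{A_{i,1}}^{\mu_{A_{i,1}}}\bigr)=0,$$
where $\delta:=\sigma_i-1$ and $\mu_{A_i}!$ is to be read (consistently with the base-$p$ expansion $\mu_{A_i}=\sum_{j=1}^{u_i}p^{j-1}\mu_{A_{i,j}}$) as $\prod_{j=1}^{u_i}\mu_{A_{i,j}}!$.

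Next I would prove the reduced identity by induction on $u_i$. For $u_i=1$ one has $\sigma_i(y_{A_{i,1}})=y_{A_{i,1}}+1$, so $\delta^{k}(y_{A_{i,1}}^{m})=\sum_{j=0}^{k}\binom{k}{j}(-1)^{k-j}(y_{A_{i,1}}+j)^{m}$ is the classical $k$-th finite difference of $X^{m}$; it equals $m!$ for $k=m$ and $0$ for $k>m$. For the inductive step, decompose $\mu_{A_i}=\mu_{A_{i,1}}+p\mu'$ with $\mu':=\sum_{j\geq 2}p^{j-2}\mu_{A_{i,j}}$, and write $\delta^{\mu_{A_i}}=\delta^{\mu_{A_{i,1}}}\circ(\sigma_i^{p}-1)^{\mu'}$, exploiting $\delta^{p}=\sigma_i^{p}-1$ in characteristic $p$. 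The element $\sigma_i^{p}$ fixes $y_{A_{i,1}}$ and restricts to a generator of $\text{Gal}(A_{i,u_i}/A_{i,1})$ satisfying $(\sigma_i^{p})^{p^{k-2}}(y_{A_{i,k}})=y_{A_{i,k}}+1$ for $k\geq 2$, so the inductive hypothesis applied in the shorter tower $A_{i,u_i}/A_{i,1}$ yields $(\sigma_i^{p}-1)^{\mu'}(y_{A_{i,u_i}}^{\mu_{A_{i,u_i}}}\cdots y_{A_{i,2}}^{\mu_{A_{i,2}}})=\mu_{A_{i,2}}!\cdots\mu_{A_{i,u_i}}!$. Since $y_{A_{i,1}}^{\mu_{A_{i,1}}}$ is $\sigma_i^{p}$-fixed it may be pulled outside $(\sigma_i^{p}-1)^{\mu'}$, and the final application of $\delta^{\mu_{A_{i,1}}}$ turns $y_{A_{i,1}}^{\mu_{A_{i,1}}}$ into $\mu_{A_{i,1}}!$ by the base case, giving $\mu_{A_i}!$ in total. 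The second vanishing statement is immediate, since one further application of $\delta$ annihilates the resulting scalar.

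The main technical point is verifying that after truncation the operator $\sigma_i^{p}$ plays on $A_{i,u_i}/A_{i,1}$ exactly the role $\sigma_i$ played on $A_i/A_{i-1}$ after shifting indices, so that the inductive hypothesis applies verbatim; this is a direct consequence of the normalisation $\sigma_i^{p^{k-1}}(y_{A_{i,k}})=y_{A_{i,k}}+1$ upon substituting $\sigma_i\mapsto\sigma_i^{p}$ and $k\mapsto k-1$. Notably, the characteristic-$p$ identity $\delta^{p}=\sigma_i^{p}-1$ circumvents the need to unwind the polynomials $f_{A_{i,k}}(T_1,\ldots,T_{k-1})$ of \cite[Proposition 1]{RzViMa2} governing $\sigma_i(y_{A_{i,k}})$, since grouping $p$ applications of $\delta$ into one application of $\sigma_i^{p}-1$ leaves only the clean shift-by-one action on the top-most variable.
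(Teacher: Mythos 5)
Your argument is correct, and it is genuinely different from what the paper does: the paper offers no proof of Lemma \ref{null} at all, deferring entirely to Proposition 4 of \cite{RzViMa2}, whose own argument works directly with the polynomials $f_{A_{i,j}}(T_1,\ldots,T_{j-1})$ governing $\sigma_i(y_{A_{i,j}})$ and tracks leading terms through the resulting expansion. Your route --- reducing to the single factor $y_{A_i}^{\mu_{A_i}}$ by pulling $\sigma_i$-fixed generators through $(\sigma_i-1)^m$, then inducting on the length $u_i$ of the cyclic sub-tower via the group-algebra identity $(\sigma_i-1)^p=\sigma_i^p-1$ together with the classical finite-difference evaluations $\Delta^m X^m=m!$ and $\Delta^{k}X^m=0$ for $k>m$ --- is self-contained and avoids the $f_{A_{i,j}}$ entirely, which is a genuine simplification. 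Two interpretive points that you make only in passing deserve emphasis, since both are needed for the statement to be true and neither is spelled out in the paper: first, $\mu_{A_i}!$ must be read as $\prod_{j}\mu_{A_{i,j}}!$ over the base-$p$ digits (the literal factorial vanishes in characteristic $p$ as soon as $\mu_{A_i}\geq p$, which would trivialise the first identity and ruin the dimension count for $\Delta_\mu$ in Theorem \ref{modulestructure}); second, the generators must be arranged so that $\sigma_i$ fixes every $y_{A_j}$ with $j\neq i$ as well as every Kummer generator, which is implicit in the preceding lemma quoted from \cite{RzViMa2} but does require realising the generators of $A_j/A_{j-1}$ inside the fixed field of the complementary direct factors, exactly as you indicate. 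You also tacitly correct the paper's normalisation $\sigma_{A_i}^{p^{i-1}}(y_{A_{i,j}})=y_{A_{i,j}}+1$ to $\sigma_{A_i}^{p^{j-1}}(y_{A_{i,j}})=y_{A_{i,j}}+1$, which is what the inductive step needs and what the cyclic case in \S 6.1 clearly intends. With those clarifications made explicit, your proof is complete.
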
 
As $L$ is a global standard function field (Definition \ref{Boseckff}), we obtain by Theorem \ref{basis} the $k$-basis of $\Omega_L$ \begin{equation*}\mathfrak{B}_L = \left\{ x^\nu [g_{{\mu}}(x)]^{-1}  z^{\mu}\; dx \;|\; \mu \in \Gamma, \ 0 \leq \nu \leq t^{{\mu}}-2\right\},\end{equation*} where $g_{{\mu}}(x)$, $z^\mu$, $\Gamma$, and $t^{{\mu}}$ are defined as in Theorem \ref{basis}. We henceforth denote a given choice of element of $\mathfrak{B}_L$ by \begin{equation} \label{diffdef} w_{\mu,\nu  }:= x^\nu [g_{{\mu}}(x)]^{-1}  z^{\mu}\; dx.\end{equation} We first prove a lemma which describes the Galois action on $\mathfrak{B}_L$.
\begin{lemma}\label{previouslemma}
For each \begin{equation*}h = ( h_{A_1}, \cdots , h_{A_s} , h_{K_1}, \cdots , h_{K_r}) \in \prod_{i=1}^s \{0,\cdots , p^{t_i}-1\} \times \prod_{j=1}^r \{0,\cdots , m_j-1\},\end{equation*} 
let
\begin{equation*} \sigma^h := \sigma_{A_1}^{h_{A_1}} \cdots \sigma_{A_s}^{h_{A_s}}\sigma_{K_1}^{h_{K_1}} \cdots \sigma_{K_r}^{h_{K_r}}.\end{equation*}
Then for every $w_{\mu,\nu} \in \mathfrak{B}_{L}$, we have \begin{align*} \sigma^h (w_{\mu , \nu})=w_{\mu,\nu} + \sum_{\substack{\mu '\in \Gamma, \mu_{A_i}'< \mu_{A_i}, i \in \{1, \cdots , s \}\\  (\mu_{K1}, \cdots, \mu_{K_r}) =(\mu_{K_1}', \cdots, \mu_{K_r}')}}  c_{\mu , \mu' ,h }\sum_{l=0}^{t^{\mu'}- t^{\mu}} B_{\mu, \mu',l} w_{\mu ', \nu +l},\end{align*} where $c_{\mu ,\mu',h}, \ B_{\mu, \mu',l}\in K$. Furthermore, let \begin{equation*}\theta_{\mu'}^{\mu, \nu}:=    w_{\mu',\nu+t^{\mu'}- t^{\mu}}+ \sum_{l=0}^{t^{\mu'}- t^{\mu}-1} B_{\mu, \mu',l} w_{\mu ', \nu +l}.\end{equation*} Then
\begin{equation*}\sigma^h (\theta_{\mu'}^{\mu, \nu}) =\theta_{\mu'}^{\mu, \nu} + \sum_{\substack{\mu ''\in \Gamma, \mu_{A_i}''<\mu_{A_i}', i \in \{1, \cdots , s \}\\  (\mu_{K1}', \cdots, \mu_{K_r}') =(\mu_{K_1}'', \cdots, \mu_{K_r}'')}} c_{\mu', \mu'', h} \theta_{\mu''}^{\mu, \nu},\end{equation*} and $\theta_{\mu'}^{\mu, \nu} \in \Omega_L$.
\end{lemma}
\begin{proof} By definition of $\sigma_{A_i}$ and $\sigma_{K_i}$, we have \begin{align*}  \sigma^h (w_{\mu, \nu})&\\=& x^{\nu} [g_{\mu} (x)]^{-1} ( \sigma_{A_1}^{h_{A_1}} (y_{A_1,1}))^{\mu_{A_1,1}} \cdots ( \sigma_{A_1}^{p^{t_1-1}h_{A_1}}(y_{A_1,t_1}))^{\mu_{A_1,t_1}} \cdots( \sigma_{A_2}^{h_{A_2}} (y_{A_s,1}))^{\mu_{A_s,1}}  \\& \quad \cdots  (\sigma_{A_s}^{p^{t_s-1}h_{A_s}} (y_{A_s,t_s}))^{\mu_{A_s,t_s}}  \cdots(\sigma_{K_1}^{h_{K_1}} (y_{K_1}))^{\mu_{K_1}} \cdots  (\sigma_{K_r}^{h_{K_r}} (y_{K_r}))^{\mu_{K_r}} dx \\=& x^{\nu} [g_{\mu} (x)]^{-1}  (y_{A_1,1}+h_{A_1})^{\mu_{A_1,1}} \cdots (y_{A_1,t_1}+P_{A_1,t_1,h} (y_{A_1,1}, \cdots , y_{A_1,t_1-1}))^{\mu_{A_1,t_1}} \cdots \\&\quad \cdots(y_{A_s,1}+h_{A_s})^{\mu_{A_s,1}} \cdots (y_{A_s,t_s}+P_{A_s,t_s,h} (y_{A_s,1}, \cdots , y_{A_s,t_s-1}))^{\mu_{A_s,t_s}} \\
&\quad(\xi_{1}^{h_{K_1}} y_{1})^{\mu_{s+1}} \cdots (\xi_r^{h_{K_r}} y_{r})^{\mu_{s+r}} dx \\ =& x^{\nu} [g_{\mu} (x)]^{-1}  \sum_{\substack{\mu '\in \Gamma, \mu_{A_i}'\leq \mu_{A_i}, i \in \{1, \cdots , s \}\\  (\mu_{K1}, \cdots, \mu_{K_r}) =(\mu_{K_1}', \cdots, \mu_{K_r}')}} c_{\mu , \mu ',h}y_{A_1,1}^{\mu_{A_1,1}'} \cdots y_{A_s,t_s}^{\mu_{A_s, t_s}'} y_{\_1}^{\mu_{K_1}} \cdots y_{K_r}^{\mu_{K_r}} dx,
\end{align*}
where $P_{A_i,j} (T_1, \cdots , T_{j-1}) \in \mathbb{Z}[T_1, \cdots , T_{j-1}]$ and $c_{\mu,\mu, h}=1$. By the proof of \cite[Theorem 2]{RzViMa2}, one can easily verify that $\mu_{A_i}'\leq \mu_{A_i}$. We thus obtain $\lambda_{\mathcal{P}}^{\mu'} \geq \lambda_{\mathcal{P}}^{\mu}$. Let
\begin{align} \label{hmu} h_{\mu ,\mu '} (x)=\frac{g_{\mu'}(x)}{g_{\mu}(x)} = \prod_{\mathcal{P} \in \mathbb{P}_K} p_{\mathcal{P}} (x)^{\lambda_{\mathcal{P}}^{\mu'}- \lambda_{\mathcal{P}}^{\mu}} =\sum_{l=0}^{t^{\mu'}- t^{\mu}} B_{\mu ,\mu ',l} x^l. \end{align}
It follows that\begin{align*} \sigma^h (w_{\mu , \nu})= \sum_{\substack{\mu '\in \Gamma, \mu_{A_i}'\leq \mu_{A_i}, i \in \{1, \cdots , s \}\\  (\mu_{K1}, \cdots, \mu_{K_r}) =(\mu_{K_1}', \cdots, \mu_{K_r}')}} c_{\mu, \mu',h }\sum_{l=0}^{t^{\mu'}- t^{\mu}} B_{\mu, \mu',l} w_{\mu ', \nu +l}. \end{align*} Note that $B_{\mu, \mu',t^{\mu'}- t^{\mu}}=1$. We also have that \begin{equation*}\nu+l \leq t^{\mu} +t^{\mu'}- t^{\mu}-2\leq t^{\mu'}-2,\end{equation*} and as a consequence, $w_{\mu', \nu+l} \in \mathfrak{B}_L$. Furthermore, we find
 \begin{align*} \sigma^h (\theta^{\mu,\nu}_{\mu'} )&= \sum_{l=0}^{t^{\mu'}- t^{\mu}} B_{\mu ,\mu ',l} \  \sigma^h (w_{\mu', \nu +l})\\ &= \sum_{l=0}^{t^{\mu'}- t^{\mu}} B_{\mu ,\mu',l} \sum_{\substack{\mu ''\in \Gamma, \mu_{A_i}''\leq \mu_{A_i}', i \in \{1, \cdots , s \}\\  (\mu_{K1}', \cdots, \mu_{K_r}') =(\mu_{K_1}'', \cdots, \mu_{K_r}'')}}c_{\mu',\mu'' , h}\sum_{k=0}^{t^{\mu''}- t^{\mu'}} B_{\mu',\mu '',k} w_{\mu'', \nu +l+k}.\end{align*} By definition of $h_{\mu,\mu ' }$ \eqref{hmu}, for all $\mu ''\leq \mu ' \leq \mu$, we obtain \begin{align*} h_{\mu ,\mu '' } (x)=h_{\mu ,\mu '} (x)h_{\mu',\mu'' } (x)=\sum_{l=0}^{t^{\mu''}- t^{\mu}} B_{\mu,\mu'',l} x^l = \sum_{l=0}^{t^{\mu''}- t^{\mu}} \sum_{e+f=l}B_{\mu',\mu'',e}B_{\mu,\mu',f} x^l.\end{align*} Thus, \begin{align*} \sigma^h (\theta^{\mu,\nu}_{\mu'} )= \sum_{\substack{\mu ''\in \Gamma, \mu_{A_i}''\leq \mu_{A_i}', i \in \{1, \cdots , s \}\\  (\mu_{K1}', \cdots, \mu_{K_r}') =(\mu_{K_1}'', \cdots, \mu_{K_r}'')}}c_{\mu',\mu'' , h}\theta^{\mu,\nu}_{\mu''} = \theta^{\mu,\nu}_{\mu'}+ \sum_{\substack{\mu ''\in \Gamma, \mu_{A_i}''< \mu_{A_i}', i \in \{1, \cdots , s \}\\  (\mu_{K1}', \cdots, \mu_{K_r}') =(\mu_{K_1}'', \cdots, \mu_{K_r}'')}}c_{\mu',\mu'' , h}\theta^{\mu,\nu}_{\mu''}.  \end{align*}
 Finally, the fact that $\theta_{\mu'}^{\mu, \nu} \in \Omega_L$ follows immediately from \cite[Theorem 2]{RzViMa2}, which concludes the proof.
\end{proof}
\begin{theorem} \label{modulestructure} Let $L$ be a global standard function field (Definition \ref{Boseckff}) such that $L/K$ is abelian. Let $p^\tau$ be the order of the $p$-Sylow subgroup $G_p$ of $\text{Gal}(L/K)$, and let $m$ be the order of the quotient $\text{Gal}(L/K)/G_p$. We define the $p$-adic expansion \begin{equation*}\mu_A:= p^{\tau -1}\mu_{A_1,1} + \cdots +  p^{\tau - t_1} \mu_{A_1,t_1}+ \cdots +  p^{t_r - 1} \mu_{A_r,1} + \cdots + \mu_{A_r,t_r} \in \{0,\ldots,p^\tau - 2\},\end{equation*} where each $\mu_{A_i,j} \in \{0,\ldots,p-1\}$ is defined as in \eqref{padic}; we also define (see \eqref{mudef}) \begin{equation*}\mu_K = (\mu_{K_1},\ldots,\mu_{K_r}) \qquad (\mu_{K_j} \in \{0,\ldots,m_j- 1\}) \end{equation*} and identify the element $\mu$ of \eqref{mudef} with the pair  $(\mu_A,\mu_K)$. We denote this identification by \begin{equation} \label{identification} \mu \sim (\mu_A,\mu_K),\end{equation} and assume it henceforth.

Then the $k$-vector space of differentials \begin{equation*}\Delta_{\mu,\nu}=\left\langle \theta^{\mu,\nu}_{\mu'}  | \ {\mu_{A}}'\leq {\mu_{A}} \ for \ i \in \{1, \cdots , s \} \ and \ (\mu_{K_1}, \cdots, \mu_{K_r}) =(\mu_{K_1}', \cdots, \mu_{K_r}')\right\rangle\end{equation*} is a $k[G]$-indecomposable submodule of $\Omega_L$ of dimension ${\mu}_A+1$. We have $\Delta_{\mu,\nu} \cong \Delta_{\mu,\nu'}$ for any $0 \leq \nu,\nu' \leq t^\mu - 2$ (see \eqref{diffdef}), and we denote $\Delta_{\mu} := \Delta_{\mu,\nu}$. Then one has the $k[G]$-module isomorphism
\begin{equation*}\Omega_L \cong  \bigoplus_{{\mu_A}=0}^{p^\tau -1} \bigoplus_{j=1}^r \bigoplus_{\mu_{K_j}=0}^{m_j-1}  \; \Delta_{\mu}^{d_\mu},\end{equation*} where ${d_\mu}$ denotes the number of times the module $\Delta_{\mu}$ appears in this decomposition of $\Omega_L$. Furthermore,  via the identification $\mu   \sim ({\mu_A}, \mu_K)$, we have \begin{equation} \label{dimj} d_{\mu}= t^{(p^\tau-2, \mu_K)} -1, \text{ and }  d_j= t^{(j-1, \mu_K)} - t^{(j,\mu_K)} , \ j= 1, \ldots , p^\tau-2,\end{equation} where $t^\mu := t^{({\mu_A}, \mu_K)}$ is defined as in Theorem \ref{basis} (see also \eqref{tmu}).
\end{theorem}
\begin{proof}  The proof of this theorem is an adaptation of various known results combined to unify abelian (but not necessarily cyclic) $p$-extensions and coprime-to-$p$ extensions. In order to not repeat existing proofs, we outline only those parts where the arguments require a little more detail. We would like to refer the reader to \cite[Theorem 1]{RzViMa1} and \cite[Proposition 2]{KaKo} for any additional details. In this proof, we follow the basic structure of the proof of \cite[Theorem 1]{RzViMa1}. The plan of the proof is as follows. \begin{enumerate} \item The module $\Delta_{\mu,\nu}$ is independent of the choice of $0 \leq \nu \leq t^\mu - 2$ and $\Delta_{\mu}$ are $\Delta_{\mu'}$ are non-isomorphic $k[G]$-modules whenever ${\mu_A} \neq {\mu_A}'$; \item We prove that $\Omega_L$ can be expressed as a direct sum of the modules $\Delta_{\mu}$ by exhaustion of basis elements; \item We count the number of times $(d_\mu)$ that $\Delta_{\mu}$ appears as a direct summand in $\Omega_L$, which is closely related to the Boseck invariant; and \item We prove that $\Delta_\mu$ is indecomposable as a $k[G]$ module, dividing the argument into two cases: \begin{itemize} \item We first consider the case where $L/K$ is a $p$-extension, that is, $m=0$, and use a counting argument as in \cite[Theorem 1]{RzViMa1} to prove that $\Delta_\mu$ is indecomposable. \item We then consider when $L/K$ has a Kummer part ($m \neq 0$), in which case one may easily employ an argument from \cite[Proposition 2]{KaKo} in combination with \cite[Theorem 1]{RzViMa1}. \end{itemize} \end{enumerate} \emph{Step 1.} The $k$-vector space \begin{equation*}\Delta_{\mu,\nu} = \left\langle \theta^{\mu,\nu}_{\mu'}  | \ {\mu_{A}}'\leq {\mu_{A}} \text{ for } i \in \{1, \ldots , s \} \text{ and } (\mu_{K_1}, \cdots, \mu_{K_r}) =(\mu_{K_1}', \cdots, \mu_{K_r}')\right\rangle\end{equation*} is a $k[G]$-module by Lemma \ref{previouslemma} (indeed, $\mu_{A_i}' \leq \mu_{A_i}$, for all $i \in \{ 1, \cdots , s\}$ implies ${\mu_{A}}'\leq {\mu_{A}}$). We observe that by definition, $\dim_k \Delta_{\mu,\nu} ={\mu_A}+1$. With $ \theta_{{\mu_A}'}:= \theta^{\mu,\nu}_{\mu'}$, we may write $\Delta_{\mu,\nu} =\langle \theta_0 , \cdots , \theta_{{\mu_A}} \rangle $ with $G$-action (Lemma \ref{previouslemma})
\begin{equation}\label{Gaction}\sigma_h (\theta_{{\mu_A}'}) =  \sum_{\substack{\mu ''\in \Gamma, \mu_{A_i}''\leq \mu_{A_i}', i \in \{1, \cdots , s \}\\  (\mu_{K1}', \cdots, \mu_{K_r}') =(\mu_{K_1}'', \cdots, \mu_{K_r}'')}}c_{\mu',\mu'' , h} \theta_{{\mu_A}''}, \qquad \qquad ({\mu_A}' \leq {\mu_A}),\end{equation}
whence, up to $k[G]$-isomorphism, $\Delta_{\mu,\nu}$ is independent of the choice of $0 \leq \nu \leq t^\mu - 2$. We therefore write $\Delta_{\mu}:=\Delta_{\mu,\nu}$ for any such choice of $\nu$. The modules $\Delta_{\mu}$ and $\Delta_{\mu'}$ are non-isomorphic $k[G]$-modules whenever $\mu \neq \mu'$, as  \begin{itemize} \item if their $p$-parts differ, i.e., ${\mu_A} \neq {\mu_A}'$, then their dimensions are not equal: \begin{equation*} \dim_k \Delta_{\mu} ={\mu_A}+1\neq {\mu_A}'+1 =\dim_k \Delta_{\mu};\end{equation*} whereas \item if ${\mu_A} = {\mu_A}'$ and $\mu \neq \mu'$, then $\mu_K\neq {\mu_K}'$, so that again $\Delta_{\mu}$ and $\Delta_{\mu'}$ are not isomorphic as $k[G]$-modules. \end{itemize}

\noindent \emph{Step 2.} Here, we employ a maximality argument similar to that of \cite[Theorem 1]{RzViMa1}. We denote 
\begin{equation*}\Delta_{{\mu_A}}:=\left\langle \theta^{\mu,\nu}_{\mu'}  | \ {\mu_{A}}'\leq {\mu_{A}} \ for \ i \in \{1, \cdots , s \} \right\rangle.\end{equation*}
By definition, we have that
 \begin{equation*}\Delta_{{\mu_A}}= \bigoplus_{j=1}^r \bigoplus_{\mu_{K_j}=0}^{m_j-1}  \Delta_{\mu}.\end{equation*}
 We fix some ${\mu_0}_K \in \prod_{j=1}^{r} \{ 0, \cdots , m_j-1\}$, and we choose $0 \leq {\mu_0}_A \leq p^\tau -2$ to be maximal such that $\mu_0 \sim ({\mu_0}_A,{\mu_0}_K)$ (see \eqref{identification}) satisfies $t^{{\mu_0}}-2 \geq 0$. Let $\nu_0 = t^{{\mu_0}}-2$ (here, $t^{\mu_0}$ is again defined as in Theorem \ref{basis}; see also \eqref{tmu}). As \begin{equation*}\theta_{\mu'}^{\mu_0, \nu_0}=    w_{\mu',\nu_0+t^{\mu'}- t^{\mu_0}}+ \sum_{l=0}^{t^{\mu'}- t^{\mu_0}-1} B_{\mu_0, \mu',l} w_{\mu ', \nu_0 +l},\end{equation*} we have $\Omega_L = M_0\oplus  \Delta_{\mu_0}$ as $k[G]$-modules, where $M_0$ is module generated by the (non-disjoint) union
\begin{align*} M_0 =& \bigg \langle \bigcup_{j=0}^{\nu_0-1} \{ w_{\mu, j+e}\;|\; 0 \leq \mu \leq {{\mu_0}_A}, \mu_K= {\mu_0}_K, 0 \leq e \leq t^{\mu} - t^{\mu_0}\}\\ 
& \qquad \bigcup \{ w_{\mu, \nu_0 +e} \;|\; 0 \leq \mu \leq {{\mu_0}_A}, \mu_K= {\mu_0}_K, 0 \leq e < t^{\mu} - t^{\mu_0}\} \bigg \rangle \\
=& \big\langle w_{\mu , \nu } \;|\; 0 \leq \mu \leq p^\tau -2 , \mu_K= {\mu_0}_K,  0 \leq \nu \leq t^{\mu} -3  \big\rangle.
\end{align*} 
We then apply the previous argument to $M_0$ and any $\mu_K \in \prod_{j=1}^r \{ 0, \cdots , m_j-1\}$, so that we may now write 
\begin{equation*}\Omega_L =  M_0' \oplus  \Delta_{{{\mu_0}_A}}.\end{equation*} 

\noindent \emph{Step 3.} We may then repeat the procedure of Part 2 of this proof with $M_0'$ in place of $\Omega_L$. More precisely, let $0 \leq \mu_1 \leq p^\tau -2$ be maximal such that $t^{\mu_1} -3 \geq 0$, and let $\nu_1 = t^{\mu_1}-3$, so that we obtain 
\begin{equation*}M_0' =  M_1' \oplus  \Delta_{{{\mu_1}_A}}.\end{equation*} 
By recursion, we may then decompose $\Omega_L$ as 
a direct sum of the modules $\Delta_{{{\mu_i}_A}}$, each of which has dimension ${{\mu_i}_A}+1 \in \{ 1, \cdots , p^\tau -1\}$. All of the modules in this direct sum of a given dimension $j$ are of the form $U= \langle \theta_0 , \cdots , \theta_{j-1} \rangle$ with $G$-action given by \eqref{Gaction}. All such modules of a given dimension are $k[G]$-isomorphic (see also the proof of \cite[Theorem 1]{RzViMa1}). We now collect the submodules isomorphic to $\Delta_\mu$; by construction,  there are $t^{(p^\tau -2,\mu_K)} -1$ submodules of dimension $p^\tau -1$ with fixed $\mu_K$ and $t^{(j-1, \mu_K)} - t^{(j, \mu_K)}$ submodules of dimension $j$ with fixed $\mu_K$, for each $j= 1, \cdots , p^\tau -2$, as in \eqref{dimj}.\\

\noindent \emph{Step 4.} We now prove that $\Delta_{\mu}$ is an indecomposable $k[G]$-module. \begin{itemize} \item If $L/K$ is a $p$-extension (that is, $m=0$), then there is simply no Kummer component of the tower; for the same reasons as in \cite[Theorem 1]{RzViMa1} and using the same computations, one can easily prove that $\dim_k ( \Omega_L^G)= t^{(0)}-1$ and deduce the indecomposability of $\Delta_{{\mu_A}}$, which is equal to $\Delta_{\mu}$ in this case. \item If $L/K$ is not a $p$-extension ($m\neq 0$), one may first note that $\Delta_{{\mu_A}} \cong \Delta_{\mu}$ as $k[G_p]$-module. The action of $\sigma^h$ of $G$ on $\Delta_\mu$ is given in terms of the matrix $ D+N$, where $D$ is a diagonal matrix with diagonal element of the form $\xi_1^{h_{K_1} \mu_{K_1}}\cdots \xi_r^{h_{K_r} \mu_{K_r}}$ and $N$ is a nilpotent matrix. Moreover, $N+I$ is the matrix giving the action of $\sigma^{h_A}$ on $\Delta_{{\mu_A}}$ ($h_A = (h_{A_1},\ldots,h_{A_s})$, see Lemma \ref{previouslemma}), which by the previous argument is an indecomposable Jordan block. An argument similar to \cite[Proposition 2]{KaKo} then proves that $\Delta_{\mu}$ is an indecomposable $k[G]$-module.\end{itemize}
\end{proof} 
\begin{remark} 
\begin{enumerate}
\item Theorem \ref{modulestructure} holds for any compositum of cyclic Kummer and Artin-Schreier over a rational field $k(x)$ which share no ramified prime and such that the place at infinity is unramified in the compositum, but it also holds for other cases, as described in \S 5.1. In particular, this holds for any abelian extension with full ramification group of the form $\mathbb{Z}/ p \mathbb{Z}$ over the rational field, with $k$ algebraically closed.
\item One may also construct a tower with each step $L_i/L_{i-1}$ admitting a generator of the form $y_i$ in global standard form with $y_i^{p^{m_i}} - y_i=a_i$ using the techniques of Theorem \ref{basis} and the identification of \begin{equation*}\mathbb{F}_{p_{m_i}}\cong \mathbb{Z}/ p \mathbb{Z} \times \cdots \times \mathbb{Z}/ p \mathbb{Z}\end{equation*} when $\mathbb{F}_{p_{m_i}} \subset k$. This would give a generalisation of Theorems \ref{basis} and  \ref{modulestructure}.
\item If $L/K$ is an elementary abelian extension of degree $p^n$, then
\begin{equation*}c_{\mu, \mu', h} =\binom{{\mu}}{{\mu'}} h_{A_{1}}^{\mu_{A_1}} \cdots h_{A_{s}}^{\mu_{A_s}}.\end{equation*}
\item The modules $\Delta_\mu$ defined in Theorem  \ref{modulestructure} do not coincide with the orbit of $w_{\mu,\nu}$, which is equal to \begin{align*}&\left\langle \theta^{\mu,\nu}_{\mu'}  | \ \mu_{A_i}'\leq \mu_{A_i} \ for \ i \in \{1, \cdots , s \}  \text{ and }  (\mu_{K_1}, \cdots, \mu_{K_r}) =(\mu_{K_1}', \cdots, \mu_{K_r}')\right\rangle\\& \cong k[G_p] / ( ( \sigma_1- 1)^{\mu_{A_1}+1} , \cdots , ( \sigma_s- 1)^{\mu_{A_s}+1}).\end{align*} This follows from Lemma \ref{null}, which yields the $k[G]$-epimorphism \begin{equation*}k[G_p] / ( ( \sigma_1- 1)^{\mu_{A_1}+1} , \cdots , ( \sigma_s- 1)^{\mu_{A_s}+1}) \rightarrow k[G]w_{\mu, \nu},\end{equation*} and the dimensions of $k[G_p] / ( ( \sigma_1- 1)^{\mu_{A_1}+1} , \cdots , ( \sigma_s- 1)^{\mu_{A_s}+1})$ and $k[G]w_{\mu, \nu}$ are the equal, resulting in a $k[G]$-\emph{isomorphism}.
\end{enumerate} 
\end{remark} 
\section{Concluding remarks}

The following are some examples which demonstrate that there is significant nontrivial structure remaining to be understood about when Theorems \ref{basis} and \ref{modulestructure} can be applied to describe the structure of the full automorphism group of a function field over its constant field.

\begin{example}[\emph{Fermat curves}]\label{fermat} Let $k = \mathbb{F}_q$ ($q = p^h$), and let $K_n/k$ ($(n,p) = 1$) be the function field defined by the equation \begin{equation*}x^n + y^n = 1.\end{equation*} Let $\zeta_n$ denote a primitive $n$th root of unity, and we suppose that $n\mid (q-1)$. This function field admits two types of automorphisms over $\mathbb{F}_q$. The first type of automorphism sends $x \rightarrow \zeta_n^a x$ and $y \rightarrow \zeta_n^b y$ $(a,b \in \mathbb{Z}/n\mathbb{Z})$; let $R_n$ denote the subgroup consisting of maps of this type. The second type of automorphism consists of two maps: one, which we call $S$, sending $x \rightarrow -\frac{y}{x}$ and $y \rightarrow \frac{1}{x}$, and the second, which we call $T$, sending $x \rightarrow \frac{1}{x}$ and $y \rightarrow -\frac{y}{x}$. We have $S^3 = T^2 = 1$ and $T^{-1} S T = S^{-1}$. Let $H$ denote the subgroup generated by $T$ and $S$. As noted by Leopoldt \cite{Leo}, if $n-1$ is not a power of $p$, then the automorphism group of the Fermat curve is given by $G_n = R_n H$. We have $R_n \unlhd G_n$, so that the group $G_n$ is the semidirect product of $\mathbb{Z}/n\mathbb{Z} \times \mathbb{Z}/n\mathbb{Z}$ with a dihedral group of order 6. As shown by Lang \cite{Lang}, the space of holomorphic differentials of this curve is generated over $k$ by the Boseck basis (the same as that given by Boseck \cite{Bos}) consisting of elements \begin{equation*}\omega_{r,s} = x^{r-1} y^{s-n} \; dx,\end{equation*} where $r,s \geq 1$ and $r + s \leq n-1$. (This is slightly different from the basis given in Lemma \ref{kummer}.) The basis $\{\omega_{r,s}\}$ provides a representation of $R_n$ via action on the term $x^{r} y^{s}$ within $\omega_{r,s}$. The fixed field of $R_n$ is equal to $K^{R_n} = \mathbb{F}_q(x^n) = \mathbb{F}_q(y^n)$. Via the generating equation of the Kummer generator $y$ of $K_n$ over $\mathbb{F}_q(x)$, the place of $\mathbb{F}_q(x)$ at infinity for $x$ is unramified in $K_n$. Thus, the conditions of Theorem \ref{basis} are satisfied for $K_n/\mathbb{F}_q(x)$. By L\"{u}roth's theorem \cite{Ros}, the fixed field $F$ of the automorphism group of $K_n$ over $\mathbb{F}_q$ is rational. We do not know if it is possible to obtain the basis of Theorem \ref{basis} for the full extension $K_n/F$.
\end{example}

\begin{example}[\emph{Artin-Mumford curves}] For $k = \overline{\mathbb{F}}_p$, the Artin-Mumford curve $\mathcal{M}_c$ for a given $c \in k$ is defined as \begin{equation*}(x^p - x)(y^p - y) = c.\end{equation*} The group of automorphisms of this curve over $\overline{\mathbb{F}}_p$ forms a semidirect product of a direct product $C_p \times C_p$ of two cyclic groups, each of order $p$, with the dihedral group $D_{p-1}$ \cite{VaMa2}. The place at infinity of $k(x)$ is unramified in $\mathcal{M}_c$, as can be seen by examining the generating equation of the Artin-Schreier generator $y$ of $\mathcal{M}_c$ over $k(x)$. Thus, the conditions of Theorem \ref{basis} are satisfied for $\mathcal{M}_c$ over $k(x)$. As with the Fermat curve in Example \ref{fermat}, the fixed field $F$ of the automorphism group of $\mathcal{M}_c$ is rational \cite{ArKo}. We do not know if it is possible to obtain the basis of Theorem \ref{basis} for the extension $\mathcal{M}_c/F$. \end{example}

\begin{remark} For a Galois extension of function fields $K/k(x)$ with field of constants $k$ for which \begin{equation*}\text{Gal}(K/k(x)) \unlhd \text{Aut}_k(K)\end{equation*} and fixed field $F$ of $\text{Aut}_k(K)$, we may examine the Galois group of $k(x)/F$. If $k$ is algebraically closed, the possible ramification behaviours and group structures of $k(x)/F$ are described completely in \cite{VaMa2}. This allows for a description of the Boseck basis and representation of $\text{Aut}_k(K)$ in terms of the extension $K/F$ via Theorem \ref{basis} if the conditions of the Theorem are satisfied.
\end{remark}

\begin{example}[\emph{Hermitian curves}]\label{hermite} Suppose that the field of constants $k=\mathbb{F}_{q^2}$. Let $K=k(x,y)/k$ be the Hermitian curve, defined by the equation \begin{equation*}y^q + y = x^{q+1}.\end{equation*} The automorphism group of $K/k$ is large (i.e., it does not satisfy the Hurwitz bound $|\text{Aut}_k(L)|\leq 84(g-1)$) and isomorphic to $PGU(3,q)$, of order $(q^3 + 1)(q^2 -1)q^3$. As with the Fermat curve, one may define the basis of holomorphic differentials in terms of the Kummer extension $K/k(y)$. As the automorphism group of the Hermitian curve is in general not solvable, the result of Theorem \ref{basis} may not be applied in this case.
\end{example}

\begin{remark} This work raised some additional questions for function fields in characteristic $p > 0$ in cases where the constant field is not assumed to be algebraically closed, which are related to when one may construct a basis as in Theorem \ref{basis} and emanate from connections between Galois and ramification theory.  

\begin{enumerate}[(i)]
\item In what generality does a global standard form exist? (See the discussion in \S 5.)
\item When does a tower of multiple Kummer and Artin-Schreier extensions form a Galois extension? 
\item Given a non-Galois tower $L/K$, when are the indices of ramification, inertia degrees, and differential exponents independent of the choice of place of $L$ above a given place of $K$, for all places of $K$? 
\item It would be interesting to find a global standard function field (Definition \ref{Boseckff}) with a non-abelian Galois tower, particularly, a Galois tower with a non-abelian $p$-group as Galois group. (If one could describe the Galois action on generators, then one could hope to learn about representations of such groups.)
\item As appears in Definition \ref{Boseckff}, when does there exist a subgroup of the automorphism group of a function field with a rational fixed field, such that the place at infinity is unramified? (This condition is necessary for Theorem \ref{basis}.)
\item Is it possible to find an explicit basis of holomorphic differentials of a field over a tower if there is an unramified step in the tower? (See the discussion in \S 3.)
\item Is it possible to construct a Boseck basis - in a tower or otherwise - when there does not exist an unramified place of degree one? (Even for cyclic automorphism groups, this would be interesting to know; this never occurs when the constant field $k$ is algebraically closed, but could happen when $k$ is finite, including when $K$ is a rational field.)
\end{enumerate}
\end{remark}
\section*{Acknowledgements}

We would like to express our gratitude to the anonymous referees for their careful questions and comments.

%%%%%%%%%%%%%%%%%%%%%%%%%%%%%%

\end{document}